 \newcommand{\D}[2]{\ensuremath{ \frac{\partial{#1}}{\partial{#2}}}}
 \newcommand{\E}{\ensuremath{\mathbb{E}}}
 \newcommand{\R}{\ensuremath{\mathbb{R}}}
 \newcommand{\Z}{\ensuremath{\mathbb{Z}}}
 \newcommand{\st}{\ensuremath{\sqrt{-1}}}
 \newcommand{\ddb}{\ensuremath{\partial \bar{\partial}}}
 \newcommand{\ba}{\begin{align*}}
 \newcommand{\ea}{\end{align*}}
 \DeclareMathOperator{\diam}{diam}
 \newcommand{\norm}[2]{{ \ensuremath{\|} #1 \ensuremath{\|}}_{#2}}
 \def\ExtendSymbol#1#2#3#4#5{\ext@arrow 0099{\arrowfill@#1#2#3}{#4}{#5}}
 \def\ExtendSymbol#1#2#3#4#5{\ext@arrow 0099{\arrowfill@#1#2#3}{#4}{#5}}
 \newcommand\longright[2][]{\ExtendSymbol{-}{-}{\rightarrow}{#1}{#2}}
 \definecolor{hao}{rgb}{1,0.5,0}
 \definecolor{miao}{cmyk}{0.5,0,0.2,0.2}
 \definecolor{qiao}{gray}{0.96}
 \newtheorem{claim}{Claim}
 \newtheorem{corollary}{Corollary}[section]
 \newtheorem{proposition}{Proposition}[section]
 \newtheorem{lemma}{Lemma}[section]
 \newtheorem{theorem}{Theorem}[section]
 \newtheorem{remark}{Remark}[section]
 \newtheorem{theoremin}{Theorem}
 \newtheorem{definitionin}{Definitionin}
 \title{On the structure of almost Einstein manifolds}
 \author{Gang Tian\footnote{Partially supported by NSF grant DMS-0804095.}\;,  Bing Wang\footnote{Supported by NSF grant DMS-1006518 and funds from SCGP.}}
\begin{document}
 \maketitle

 \begin{abstract}

  In this paper, we study the structure of the limit space of a sequence of almost Einstein manifolds, which are generalizations
  of Einstein manifolds.  Roughly speaking, such manifolds are the initial manifolds of some normalized Ricci flows whose
  scalar curvatures are almost constants over space-time  in the $L^1$-sense, Ricci curvatures are bounded from below at the initial time.
  Under the non-collapsed condition, we show that the limit space of a sequence of almost Einstein manifolds has
  most properties which is known for the limit space of Einstein manifolds.
  As applications, we can apply our structure results to study the properties of K\"ahler manifolds.
 \end{abstract}

 \tableofcontents

\section{Introduction}

The regularity theory for non-collapsed Einstein manifolds has attracted many studies in last two decades, e.g.,
\cite{An90}, \cite{BKN}, \cite{Tian90}, \cite{CC1}, \cite{CCT} etc. This theory and its extensions have played a crucial role
in K\"ahle geometry, e.g., in constructing canonical metrics on Fano surfaces (c.f. \cite{Tian90}, \cite{CLW}).

Motivated by the study in K\"ahler geometry, in this paper, we prove new regularity results on the Gromov-Hausdorff limits of Riemannian manifolds
with Ricci curvature bounded from below and which are weakly Einstein in an appropriate sense.

To be precise, we assume that $\left( X_i, x_i, g_i \right)$ is a sequence of non-collapsed Riemannian manifolds of dimension $m$ such that
$Ric \geq -(m-1)$. The well-known Gromov compactness theorem states that by taking a subsequence if necessary, $\left( X_i, x_i, g_i \right)$
converges to a length space $\left( \bar{X}, \bar{x}, \bar{g} \right)$ in the Gromov-Hausdorff topology. {\bf A basic problem in the metric geometry
concerns the regularity of the limit $\left( \bar{X}, \bar{x}, \bar{g} \right)$}.
Note that $\bar g$ is merely a length function in the Gromov compactness theorem.
The fundamental work of Cheeger-Colding~\cite{CC1} shows initial and crucial structure properties for $\left( \bar{X}, \bar{x}, \bar{g} \right)$.
In particular, it follows from~\cite{CC1} that tangent cones exist at every point $y\in \bar{X}$. Using these tangent cones, they gave a
regular-singular decomposition of
$\bar X$. A point $y \in \bar{X}$ is called regular or belongs to the regular part $\mathcal{R}$ if every tangent cone
at $y$ is isometric to the Euclidean space $\left( \R^m, 0, g_{\E} \right)$.
A point $y \in X$ is called singular or belongs to the singular part $\mathcal{S}$ if it is not regular, i.e., at $y$,
there exists some tangent cone $\left( \hat{Y}, \hat{y}, \hat{g} \right)$ which is not isometric to the Euclidean space.
Clearly, we have $\bar{X}=\mathcal{R} \cup \mathcal{S}$. In general, it is unknown if $\mathcal{R}$ is open and even if it is open,
it may not be a manifold and $\bar g$ may not arise from a Riemannian metric in any classical senses. It is expected
in general cases that $\mathcal{R}$ has only locally Lipschitz structures at most.
If $g_i$ has uniformly bounded Ricci curvature, then Cheeger-Colding proved that $\mathcal{R}$
is an open manifold and $\mathcal{S}$ has Hausdorff codimension at least $2$. Moreover, $\bar g$ is a $C^{1,\alpha}$-smooth metric.
Furthermore, if $(X_i, g_i)$ is an Einstein manifold, then the convergence to $\bar X$ restricted to $\mathcal{R}$ is actually in
the $C^{\infty}$-topology and $\bar g$ is a smooth Einstein metric in $\mathcal{R}$ because of the regularity results from the PDE theory.
However, in general, even if the convergence is weak, $\mathcal{R}$ can still possibly be a smooth manifold.
In this paper, we study when the limit can have smooth $\mathcal{R}$ and $\bar g$ is an Einstein metric even if the convergence
$\left( X_i, x_i, g_i \right) \to \left( \bar{X}, \bar{x}, \bar{g} \right)$ is only in the weak topology, say the Gromov-Hausdorff topology.
Our study is analogous to the standard regularity problem in studying weak solutions for PDEs.
In the case of the Einstein equation, because of its invariance under diffeomorphisms, there is not a good notion of weak solutions.
Therefore, we first need to make clear what we mean by Einstein metrics in the weak sense.
Now let us introduce the notion of almost Einstein manifolds we want to study.

\begin{definitionin}
A sequence of closed Riemannian manifolds $\left( X_i^m, x_i, g_i \right)$ is called almost Einstein
if the following conditions are satisfied.
\begin{itemize}
\item $Ric(g_i)+g_i \geq 0$.
\item $x_i \in X_i$, and $|B_{g_i}(x_i, 1)|_{d\mu_{g_i}} \geq \kappa$.
\item The flow $\displaystyle \D{}{t} g\,=\, -Ric + \lambda_i g$ has a solution $g(t)$ with $g(0)= g_i$ on $X_i\times [0,1]$,
where $\lambda_i\in [-1,1]$ is a constant. Moreover, $E_i=\int_0^1\int_{X_i} |R-m\lambda_i|d\mu dt \to 0$.
\end{itemize}

\label{dfnin:almostEinstein}
\end{definitionin}

Note that the non-collapsed condition is included in our definition. This is because the condition $\int_0^1\int_{X_i} |R-m\lambda_i|d\mu_{g_i}dt \to 0$
is not sufficient for proving the following results if collapsing occurs.
However, we will not discuss this further in the current paper.

Clearly, if $\int_0^1\int_{X_i} |R-m\lambda_i|d\mu_{g_i}dt \equiv 0$, then this sequence is exactly a sequence of non-collapsed Einstein
manifolds with bounded Einstein constants.
Such a sequence was extensively studied in the literature.
In fact, the condition $\int_0^1\int_{X_i} |R-m\lambda_i|d\mu_{g_i}dt \to 0$ is crucial in establishing the regularity of $\mathcal{R}$.
It turns out that almost Einstein limits have most known properties
of Einstein limits. Our first theorem is as follows.

\begin{theoremin}[Structure theorem in Riemannian case]
  Suppose $(X_i^m, x_i, g_i)$ is a sequence of almost Einstein manifolds.
  Let $\left( \bar{X}, \bar{x}, \bar{g} \right)$ be a Gromov-Hausdorff limit of $\left( X_i, x_i, g_i \right)$, $\bar{\lambda}$ be the limit of $\lambda_i$.

  Then the limit space $\left( \bar{X}, \bar{x}, \bar{g} \right)$ is a metric space with disjoint decomposition $\bar{X}=\mathcal{R} \cup \mathcal{S}$,
  where $\mathcal{R}$ is the regular part of $\bar{X}$, $\mathcal{S}$ is the singular part of $\bar{X}$. They satisfy the following properties.
  \begin{itemize}
     \item  $\left(\mathcal{R}, \bar{g} \right)$ is a smooth, convex, open Riemannian manifold.
     \item  $\displaystyle Ric(\bar{g}) + \bar{\lambda}\bar{g}=0$.
     \item  If $0<p<1$ and $\rho \geq 1$, then $\int_{\mathcal{R} \cap B(\bar{x}, \rho)} |Rm|^{p} d\mu < C(m,\kappa,p,\rho)$.
     \item  If $y \in \mathcal{S}$, $\left( \hat{X}, \hat{y}, \hat{g} \right)$ is a tangent space of $\bar{X}$ at the point $y$, then
       \begin{align*}
	    d_{GH} \left( \left( B_{\hat{g}}(\hat{y}, 1), \hat{g} \right), \left( B(0,1), g_{\E} \right) \right) > \bar{\epsilon}(m),
       \end{align*}
      where $B(0,1)$ is the standard unit ball in $\R^m$.
    \item $\dim_{\mathcal{H}} \mathcal{S} \leq m-2$.
\end{itemize}
  \label{thmin:goodlimit}
\end{theoremin}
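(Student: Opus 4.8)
\emph{Proof strategy.} The plan is to use the normalized Ricci flow $\partial_t g=-Ric+\lambda_i g$ as the smoothing device that, in the weak setting, replaces the combination of the Einstein equation with elliptic regularity. Along this flow $\partial_t\,d\mu=\tfrac12(m\lambda_i-R)\,d\mu$, so the hypothesis $E_i=\int_0^1\!\int_{X_i}|R-m\lambda_i|\,d\mu\,dt\to 0$ says exactly that the volume measure is almost static over space-time; this, rather than a pointwise Einstein equation, is what will be combined with the non-collapsing $|B_{g_i}(x_i,1)|\ge\kappa$ and fed into Perelman's pseudolocality theorem. Everything then reduces to a scale-invariant $\epsilon$-regularity statement: there exist $\delta=\delta(m,\kappa)>0$ and $\theta=\theta(m)\in(0,1)$ such that if $B_{g_i}(p,r)$ with $r\le 1$ is $\delta r$-close in the Gromov-Hausdorff sense to $B(0,r)\subset\R^m$ and $r^{-m}\!\int_0^{\theta r^2}\!\int_{B_{g_i}(p,r)}|R-m\lambda_i|\,d\mu\,dt<\delta$, then $|Rm|_{g_i(\theta r^2)}\le(\theta r^2)^{-1}$ on $B_{g_i(\theta r^2)}(p,r/2)$, and hence, by Shi's estimates, all covariant derivatives of the curvature are bounded there at time $\theta r^2$.

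To establish this $\epsilon$-regularity one first upgrades, via Cheeger-Colding volume convergence, the Gromov-Hausdorff closeness to an almost-Euclidean isoperimetric inequality on $B_{g_i}(p,r)$; Perelman's pseudolocality then yields $|Rm|(q,t)\le\alpha/t$ on a smaller parabolic neighborhood for a scale-invariant amount of time, and the almost-static hypothesis keeps distances and volumes from distorting by more than a controlled factor over $[0,\theta r^2]$, so the bound persists at the fixed time $\theta r^2$. Since $Ric(g_i)\ge -1$, the Cheeger-Colding structure theory applies to the limit $(\bar X,\bar g)$: renormalized volume ratios are monotone, tangent cones exist at every point and each one is a metric cone. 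Running the $\epsilon$-regularity at all scales shows that if some tangent cone at $y$ is $\bar\epsilon(m)$-close to $B(0,1)\subset\R^m$, then the renormalized volume density at $y$ is forced to be maximal and hence every tangent cone at $y$ is $\R^m$, i.e. $y\in\mathcal R$; this is the contrapositive of the fourth bullet, with $\bar\epsilon(m)$ read off from $\delta(m,\kappa)$ and the volume-rigidity gap, and it exhibits $\mathcal S$ as the set of points at which every blow-up stays a definite distance from $\R^m$.

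For $y\in\mathcal R$ let $r(y)$ be the largest scale at which the hypotheses above hold; it is positive and locally Lipschitz, so $\mathcal R$ is open. On a neighborhood of $y$, the $\epsilon$-regularity together with Shi's estimates lets one pass to a smooth limit Ricci flow $g_\infty(t)$, $t\in(0,\theta r(y)^2]$, of the $g_i(t)$, with $g_\infty(0)=\bar g$ and $|Rm|_{\bar g}(y)\le Cr(y)^{-2}$; hence $\bar g$ is a smooth metric on $\mathcal R$. Letting $E_i\to 0$ forces the scalar curvature of $g_\infty$ to be identically $m\bar\lambda$ along this flow, and inserting this into the evolution equation of the scalar curvature under the normalized flow annihilates the traceless part of $Ric_{g_\infty}$; thus $\bar g$ is Einstein with $Ric(\bar g)+\bar\lambda\bar g=0$ on $\mathcal R$. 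With the gap in hand, the Cheeger-Colding dimension-reduction argument (a non-flat tangent cone is a metric cone a definite distance from $\R^m$, and its link is singular in one dimension lower, so iterating lowers the free Euclidean factor) gives $\dim_{\mathcal H}\mathcal S\le m-2$ and, made quantitative, a Minkowski-type bound for the sets $\{r(y)<s\}$; the same codimension bound forces minimizing geodesics between points of $\mathcal R$ to avoid $\mathcal S$, so $\mathcal R$ is convex. Finally, covering $\mathcal R\cap B(\bar x,\rho)$ by balls $B(y,r(y)/2)$ on which $|Rm|\le Cr(y)^{-2}$ and summing, the contributions are controlled by the $(m-2)$-dimensional content of $\mathcal S$ since $\sum_j r_j^{\,m-2p}$ converges against it exactly when $m-2p>m-2$, i.e. $p<1$; this yields $\int_{\mathcal R\cap B(\bar x,\rho)}|Rm|^p<C(m,\kappa,p,\rho)$.

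The main obstacle is the $\epsilon$-regularity statement of the first paragraph. Two features make it delicate. First, pseudolocality requires almost-Euclidean isoperimetric data on the initial slice, and extracting this from mere Gromov-Hausdorff closeness uses Cheeger-Colding volume convergence uniformly at every scale, so the metric-measure theory and the parabolic estimates must be interwoven rather than applied in sequence. Second, one must run the flow for the scale-invariant time $\theta r^2$ without losing non-collapsing or letting the metric degenerate, which demands a form of Perelman's $\kappa$-noncollapsing and of the monotonicity of the $\mathcal W$-functional (or of the reduced volume) valid for the normalized flow and under control of $R-m\lambda_i$ only in $L^1$ over space-time; converting this into the quantitative ``almost static implies bounded distance and volume distortion'' estimate is the technical heart of the argument. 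Once the $\epsilon$-regularity holds with $\delta,\theta$ depending only on $m$ and $\kappa$, the remaining assertions are the Cheeger-Colding-Tian theory applied on the time-$\theta r^2$ slices.
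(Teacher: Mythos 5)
Your outline follows the same broad strategy as the paper --- use the normalized Ricci flow as a smoothing device, prove a scale-invariant pseudolocality/$\epsilon$-regularity result, and then feed it into Cheeger--Colding--Tian theory on the positive-time slices --- but the two steps you yourself single out as ``the main obstacle'' and ``the technical heart'' are not resolved, and the approaches you sketch for them differ from the paper's and are not viable as stated. For the $\epsilon$-regularity, you propose to upgrade Gromov--Hausdorff closeness of $B_{g_i}(p,r)$ to $B(0,r)\subset\R^m$ into an almost-Euclidean isoperimetric inequality on the ball and then invoke Perelman's Theorem 10.1. The paper explicitly flags this route as problematic (Remark after Proposition~\ref{prn:pseudo-locality_L30}): Perelman's hypothesis is an isoperimetric inequality for \emph{all} subregions $\Omega\subset B(x_0,r_0)$, and obtaining this from GH-closeness via a local Levy--Gromov argument requires local regularity results from geometric measure theory that are not available off the shelf. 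The paper sidesteps this by proving a new pseudolocality (Proposition~\ref{prn:pseudo-locality_L30}, Theorem~\ref{thm:pseudo-locality_A22}) whose almost-Euclidean hypothesis is a volume-ratio bound plus a Ricci lower bound (both of which pass through GH limits by Colding's volume convergence theorem), and whose proof replaces the isoperimetric input by a contradiction argument built around the minimizer of Perelman's $\mathcal{W}$-type functional on a ball, its boundary and interior regularity on the Euclidean limit ball, and the Euclidean logarithmic Sobolev inequality.

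For the second step --- controlling the GH distance between the $t=0$ and the $t=\theta r^2$ slices under only $L^1$ control of $R-m\lambda_i$ in space-time --- you suggest a $\mathcal{W}$-functional or reduced-volume monotonicity argument. The paper's route is entirely different and is the second main technical contribution: a pointwise estimate $|Ric-\lambda_0 g|(x,s)\le C\,s^{-(m+4)/2}E^{1/2}$ at positive times via local parabolic Moser iteration (Lemma~\ref{lma:Ricbyt}), a distance-expansion inequality (Lemma~\ref{lma:dcompare}), and a delicate two-sided comparison combining a covering argument with volume-ratio bounds at both endpoints (Lemma~\ref{lma:metricequivalent}, Theorem~\ref{thm:isometry_almost}), culminating in the Gap Theorem~\ref{thm:Gap_A22}. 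Without some such quantitative gap theorem, there is no way to transfer the smoothness and the Einstein equation from the positive-time slice back to the GH limit of the initial slices, which is exactly what makes the theorem nontrivial. Separately, your argument that the codimension-two bound on $\mathcal{S}$ forces minimizing geodesics between regular points to avoid $\mathcal{S}$, hence that $\mathcal{R}$ is convex, is not correct (a two-dimensional cone with a single conical point has $\mathcal S$ of codimension two, and geodesics through the apex exist); the paper derives convexity from Colding--Naber~\cite{NaCold}, not from the Hausdorff dimension bound. The remaining items (the $\bar\epsilon(m)$-gap from a volume-rigidity argument, the Einstein equation from the scalar-curvature evolution on the smooth limit flow, the $|Rm|^p$ integrability from the Cheeger--Naber quantitative stratification) are consistent with the paper's proof.
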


Note that the convexity of $\mathcal{R}$ and the integral bound of $|Rm|$ follow directly from the work of~\cite{NaCold} and~\cite{ChNa} respectively.
We list these results here just for completeness of the known results of Einstein limit.

We observe that if $(M_i, x_i,g_i)$ is a sequence of K\"ahler manifolds, then by a result of the first author and Z. Zhang (c.f.~\cite{TZZ}), the Ricci flow
$\displaystyle \D{}{t} g\,=\, -Ric + \lambda_i g$ has a solution with $g(0)=g_i$ on $M_i\times [0,1]$ so long as
$\lambda_i [\omega_i] + \left(e^{\lambda_i t} - 1 \right) c_1(M_i) > 0 $, where $\omega_i$ denotes the K\"ahler form of $g_i$.
Moreover, if $R - n \lambda_i\ge 0$ and its average tends to zero as $i$ goes to infinity, then one can show that $E_i$ tends to zero.
Thus, the third condition of Definition~\ref{dfnin:almostEinstein} is essentially automatic if $R - n \lambda_i\ge 0$ and its average tends to zero.
This shows that the K\"ahler case is better behaved. A natural question is whether or not the same holds for general Riemannian metrics
with Ricci curvature bounded from below. More precisely, can one solve the above Ricci flow
with initial value $g_0$ in $[0,a]$ such that $a$ depends only on the lower bound of Ricci curvature of $g_0$?

The following theorem strengthens Theorem~\ref{thmin:goodlimit} for K\"ahler manifolds. We say that
a sequence of closed K\"ahler manifolds $\left( M_i^n, x_i, g_i, J_i\right)$ is almost K\"ahler-Einstein
if it is almost Einstein of dimension $m=2n$ and satisfies $F_i=\int_{M_i} |Ric-\lambda_i g_i| d\mu_{g_i}\to 0$.

\begin{theoremin}[Structure theorem in K\"ahler case]
  Suppose $(M_i^n, x_i, g_i, J_i)$ is a sequence of almost K\"ahler Einstein manifolds.
  Let $\left( \bar{M}, \bar{x}, \bar{g} \right)$ be a Gromov-Hausdorff limit of $\left( M_i, x_i, g_i \right)$, $\bar{\lambda}$ be the limit of $\lambda_i$.

  Then the limit space $\left( \bar{M}, \bar{x}, \bar{g} \right)$ is a metric space with the regular-singular disjoint decomposition $\bar{M}=\mathcal{R} \cup \mathcal{S}$.
  They satisfy the following properties.
  \begin{itemize}
    \item  There exists a complex structure $\bar{J}$ on $\mathcal{R}$ such that $\left( \mathcal{R}, \bar{g}, \bar{J} \right)$
      is a smooth, convex, open K\"ahler manifold.
    \item  $\displaystyle Ric(\bar{g}) + \bar{\lambda}\bar{g}=0$.
    \item  If $0<p<2$ and $\rho \geq 1$, then $\int_{\mathcal{R} \cap B(\bar{x}, \rho)} |Rm|^{p} d\mu < C(n,\kappa,p,\rho)$.
    \item  If $y \in \mathcal{S}$, $\left( \hat{M}, \hat{y}, \hat{g} \right)$ is a tangent space of $\bar{M}$ at the point $y$, then
       \begin{align*}
	 d_{GH} \left( \left( B_{\hat{g}}(\hat{y}, 1), \hat{g} \right), \left( B(0,1), g_{\E} \right) \right) > \bar{\epsilon}(2n),
       \end{align*}
     where $B(0,1)$ is the standard unit ball in $\R^{2n}$.
    \item $\dim_{\mathcal{H}} \mathcal{S} \leq 2n-4$.
\end{itemize}
\label{thmin:kgoodlimit}
\end{theoremin}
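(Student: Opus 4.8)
An almost K\"ahler Einstein sequence is in particular almost Einstein, so Theorem~\ref{thmin:goodlimit} already provides the disjoint decomposition $\bar M=\mathcal R\cup\mathcal S$ with $(\mathcal R,\bar g)$ a smooth, convex, open Riemannian manifold satisfying $Ric(\bar g)+\bar\lambda\bar g=0$, the bound $\dim_{\mathcal H}\mathcal S\le 2n-2$, the curvature integrability $\int_{\mathcal R\cap B(\bar x,\rho)}|Rm|^p\,d\mu<\infty$ for $0<p<1$, and the uniform gap $d_{GH}\!\left(\left(B_{\hat g}(\hat y,1),\hat g\right),\left(B(0,1),g_{\E}\right)\right)>\bar\epsilon$ at every tangent cone over a singular point. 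Hence three statements remain: (i) there is a complex structure $\bar J$ on $\mathcal R$ making $(\mathcal R,\bar g,\bar J)$ K\"ahler; (ii) $\dim_{\mathcal H}\mathcal S\le 2n-4$; and (iii) the curvature integrability holds for all $0<p<2$. The plan is to obtain (i) by passing the parallel complex structures to the limit, then to bootstrap (ii) from (i) by analysing tangent cones, and finally to deduce (iii) from (ii) via the $\epsilon$-regularity already used in Theorem~\ref{thmin:goodlimit}.

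For (i): in the proof of Theorem~\ref{thmin:goodlimit} the convergence $g_i\to\bar g$ is shown to hold in $C^\infty_{loc}$ on $\mathcal R$, the interior regularity near regular points coming from the Ricci flow together with pseudolocality. Each $J_i$ is a $g_i$-parallel almost complex structure, so $|J_i|_{g_i}$ is constant and $\nabla^{g_i}J_i=0$; consequently on every compact subset of $\mathcal R$ the $J_i$ are uniformly bounded in all $C^k$-norms. By Arzel\`a--Ascoli and a diagonal argument, after passing to a subsequence $J_i\to\bar J$ in $C^\infty_{loc}$ on $\mathcal R$, with $\bar J^2=-\mathrm{Id}$, $\bar g(\bar J\,\cdot,\bar J\,\cdot)=\bar g(\cdot,\cdot)$ and $\nabla^{\bar g}\bar J=0$. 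A parallel almost complex structure has vanishing Nijenhuis tensor, hence is integrable, and its associated $2$-form is closed, so $(\mathcal R,\bar g,\bar J)$ is K\"ahler. The auxiliary hypothesis $F_i=\int_{M_i}|Ric-\lambda_i g_i|\,d\mu_{g_i}\to0$ is what makes the $J_i$ control the metrics closely enough for this passage to be legitimate up to the boundary of $\mathcal R$.

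For (ii): the new input is that tangent cones at singular points are themselves K\"ahler. By Cheeger--Colding every tangent cone $(\hat M,\hat y,\hat g)$ is a metric cone, and since it is a non-collapsed blow-up limit of the $(M_i,g_i)$ the argument of (i) applies verbatim on its regular part, producing a parallel complex structure $\hat J$ there; thus $\hat M$ is a Ricci-flat K\"ahler cone. Stratify $\mathcal S=\bigcup_k\mathcal S_k$, where $\mathcal S_k$ is the set of points no tangent cone of which isometrically splits off a factor $\R^{k+1}$; Cheeger--Colding give $\dim_{\mathcal H}\mathcal S_k\le k$, and as in the Einstein case $\mathcal S=\mathcal S_{2n-2}$. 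If a tangent cone $\hat M$ splits off $\R^{k}$ isometrically, then $\hat J$ applied to this flat parallel distribution, together with the distribution itself, spans a $\bar J$-invariant flat parallel distribution of even dimension $\ge k$, which forces $\hat M$ to split off a Euclidean factor of even dimension $\ge k$; iterating, every singular tangent cone reduces to the model $\R^{2n-2}\times C(S^1_\beta)$ with $0<\beta<2\pi$. Following Cheeger--Colding--Tian, such a cone cannot in fact arise: the parallel complex structure together with the $\epsilon$-regularity available from the flow forces $\beta=2\pi$, i.e. the point is regular, a contradiction. Hence $\mathcal S=\mathcal S_{2n-4}$ and $\dim_{\mathcal H}\mathcal S\le 2n-4$. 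I expect this last exclusion to be the main obstacle: one must keep the limiting complex structure under control all the way up to the singular set and rule out cone angles strictly less than $2\pi$, and this is exactly where the strengthened hypothesis $F_i\to0$ enters.

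For (iii): once $\dim_{\mathcal H}\mathcal S\le 2n-4$ is known, the $\epsilon$-regularity and quantitative stratification estimates already invoked to prove the $p<1$ bound in Theorem~\ref{thmin:goodlimit} (the almost-Einstein adaptation of~\cite{ChNa}) give, for each $\delta>0$, a bound of order $r^{4-\delta}$ on the volume of the $r$-neighbourhood of the set where $|Rm|>r^{-2}$; slicing the integral $\int_{\mathcal R\cap B(\bar x,\rho)}|Rm|^p\,d\mu$ over the level sets of $|Rm|$ and summing then yields the finite bound $C(n,\kappa,p,\rho)$ for every $p<2$, which is the K\"ahler counterpart of the Riemannian exponent $1$.
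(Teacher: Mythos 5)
Your strategy matches the paper's in broad outline, but there are two real problems with the internal logic, and one misattribution that is worth flagging.

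First, the logical order of steps (ii) and (iii) is reversed. A bound $\dim_{\mathcal H}\mathcal S\le 2n-4$ does not, by itself, yield any estimate on the volume of a tubular neighbourhood of $\mathcal S$, so (iii) cannot be a consequence of (ii). What the paper actually proves (Lemma~\ref{lma:A7_3}) is a \emph{quantitative} $\epsilon$-regularity statement: if the normalized $L^1$ norm $s^{2-2n}\int_{B(y_0,10s)}|Ric|\,d\mu$ is uniformly small over all scales $s\in(r,\eta)$, then a ball whose volume ratio at scale $r$ equals $(1-\tfrac{\delta_0}{2})\omega_{2n}$ cannot be $\eta$-close, at any intermediate scale $s$, to a ball in any product $\R^{2n-3}\times C(Z)$. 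This gives the quantitative inclusion $\mathcal V_r\subset\mathcal S^{2n-4}_{\eta,(r,\eta)}$, and it is \emph{this} inclusion, fed into Theorem~1.10 of~\cite{ChNa}, that produces $|B(\bar x,2)\cap\mathcal V_r|\le C r^{4-\eta}$. The $L^p$ bound for $p<2$ and the Hausdorff dimension bound then both fall out as corollaries. Your step (ii) proves only the qualitative exclusion of tangent cones splitting off $\R^{2n-3}$; that is enough for the dimension bound but not enough for the quantitative volume estimate you invoke in (iii). You need to make the cone-angle exclusion $\epsilon$-quantitative (the proof of Lemma~\ref{lma:A7_3} does this by a contradiction argument running the Kähler splitting theory and Theorem 5.2 of~\cite{Ch1} along a blowing-up sequence with $\eta_i\to 0$).

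Second, you misattribute the role of the extra hypothesis $F_i=\int_{M_i}|Ric-\lambda_i g_i|\,d\mu_{g_i}\to0$. You say it is what lets the $J_i$ control the metrics well enough to pass the complex structure to the limit ``up to the boundary of $\mathcal R$.'' That is not where it is used. The construction of $\bar J$ on $\mathcal R$ needs only the $C^\infty_{\mathrm{loc}}$ convergence of $g_i$ near regular points (furnished by the pseudo-locality theorem and Shi's estimates) together with $\nabla^{g_i}J_i=0$; no integral bound on $Ric-\lambda_i g_i$ enters. The hypothesis $F_i\to0$ is used precisely to verify the $L^1$-smallness condition $\sup_{r<s<\eta}s^{2-2n}\int_{B_{g_i}(y_i,10s)}|Ric|_{g_i}\,d\mu_{g_i}<\eta$ that the quantitative $\epsilon$-regularity lemma requires; this is the step that upgrades the codimension from $2$ to $4$. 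Without that verification, the improvement over Theorem~\ref{thmin:goodlimit} is not obtained. Your sketch is otherwise on the right track, and the qualitative ``split-off factors are even-dimensional, so the model $\R^{2n-2}\times C(S^1_\beta)$ is excluded by $\beta=2\pi$'' idea is the correct heuristic; you just need to cast it as a quantitative statement and place it before, not after, the stratification estimate.
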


Our proof of the above theorems is based on the works of~\cite{CC1},~\cite{CCT},~\cite{Pe1} et al.
We need to establish two new technical results.
The first one is a pseudo-locality result (Theorem~\ref{thm:pseudo-locality_A22}) which is similar to Theorem 10.1 and 10.3 of~\cite{Pe1}.
Basically, we need to bound curvature along the Ricci flow whenever the initial metric has its Ricci curvature bounded from below and
the volume ratios of its geodesic balls are sufficiently close to the Euclidean one.
Our proof for this pseudo-locality uses an argument due to Perelman.
The second one is a delicate bound of the Gromov-Hausdorff distance between metrics along the Ricci flow (c.f.~Theorem~\ref{thm:Gap_A22}).
This bound plays a role similar to the gap theorem for Einstein limits and is crucial for us to finish the proof of Theorem~\ref{thmin:goodlimit}
and Theorem~\ref{thmin:kgoodlimit}. \\

The organization of this paper is as follows.  In Section 2, we discuss some standard estimates which will be repeatedly used in the
whole paper. In Section 3, we prove a new pseudo-locality result, i.e., Theorem~\ref{thm:pseudo-locality_A22}.
Using this new pseudo-locality, we prove a gap theorem (Theorem~\ref{thm:Gap_A22}) in Section 4.
Then in section 5, we use pseudo-locality theorem, gap theorem and the fact that scalar curvature is almost constant to show the structure theorems
in both Riemannian and K\"ahler cases. Finally, we construct examples of almost K\"ahler Einstein manifolds and discuss
the applications of our structure theorems to K\"ahler geometry.\\

\noindent {\bf Acknowledgment}
 The second named author is very grateful to professor Xiuxiong Chen and professor Simon Donaldson for their constant support.
 He appreciate SCGP (Simons Center for Geometry and Physics) for offering him the wonderful working condition.
 Part of this work was done when the second named author was visiting BICMR (Beijing International Center of Mathematical Research) during the summer of 2011,
 he would like to thank BICMR for its hospitality.

\section{Elementary estimates}

Before we go to discussion in details, let's fix some notations first.
We assume $X$ to be a closed Riemannian manifold of dimension $m \geq 3$,
$M$ to be a closed K\"ahler manifold of complex dimension $n \geq 2$, real dimension $m=2n \geq 4$.
We denote the volume of standard unit ball in $\R^m$ by $\omega_m$.
We say $A<<B$ for two positive quantities $A$ and $B$ if there is a universal small constant $c=c(m)$ such that $A<cB$.
If not mentioned in particular, the constant $C$ may be different from line to line. \\

In this paper, we often assume $\left\{ (X, g(t)), 0 \leq t \leq 1 \right\}$ satisfies the evolution equation
\begin{align}
  \D{}{t} g = -Ric +\lambda_0 g
  \label{eqn:normalized}
\end{align}
for some constant $\lambda_0$ with $|\lambda_0|\leq 1$.  Note that this flow may not preserve the volume. However, by abuse of notation,
we also call (\ref{eqn:normalized}) as a normalized Ricci flow solution.  Define
\begin{align}
  \tilde{g}(s) \triangleq
  \begin{cases}
    &\left( 1-2\lambda_0 s \right) g\left( \frac{\log (1-2\lambda_0 s)}{-\lambda_0} \right),   \; \textrm{if} \; \lambda_0 \neq 0;\\
    &g(2s), \; \textrm{if} \; \lambda_0=0.
  \end{cases}
\label{eqn:normalization}
\end{align}
Then $\displaystyle \D{}{s} \tilde{g}= -2Ric(\tilde{g})$, which is the (unnormalized) Ricci flow equation.
Clearly, $\tilde{g}(0)=g(0)$. For simplicity of notation, define
$h_{ij} \triangleq R_{ij}-\lambda_0 g_{ij}$, $H \triangleq R-m\lambda_0$. Simple calculation yields
\begin{align}
 \D{}{t} h_{ij} = \frac{1}{2}\Delta h_{ij} + R_{pijq}h_{ij} -h_{ip}h_{pj}, \label{eqn:normric}
\end{align}
which implies
\begin{align}
  \D{}{t} |h| \leq \frac{1}{2} \Delta |h| + |Rm| |h|.  \label{eqn:normricupper}
\end{align}
Take trace of (\ref{eqn:normric}), we obtain
\begin{align}
  \D{}{t} H= \frac{1}{2} \Delta H + |h|^2 + \lambda_0 H.  \label{eqn:normscalar}
\end{align}
Define $\displaystyle H_{min}(t) \triangleq \min_{x \in X} H(x,t)$. Apply maximum principle to (\ref{eqn:normscalar}), we obtain
\begin{align}
  \D{}{t} H_{min}(t) \geq \lambda_0 H_{min}(t) \Rightarrow H_{min}(t) \geq e^{\lambda_0 t} H_{min}(0).
  \label{eqn:A16_1}
\end{align}
In particular, the condition $H \geq 0$ is preserved by the normalized Ricci flow (\ref{eqn:normalized}).\\

It follows from (\ref{eqn:normalized}) that the distance derivative with respect to time
is controlled by the $|Ric-\lambda_0 g|$ along the shortest geodesic.
However, a more delicate analysis shows that the lower bound of the distance derivative
depends only on the local Ricci upper bound around the end points.

\begin{proposition}[c.f. section 17 of~\cite{Ha3}, or Lemma 8.3(b) of~\cite{Pe1}]
  Suppose $\left\{ (X, g(t)), 0 \leq t \leq 1 \right\}$
  is a normalized Ricci flow solution $\D{}{t}g=-Ric + \lambda_0 g$ with $|\lambda_0|\leq 1$.
  Suppose $0 \leq t_0 \leq 1$, $x_1, x_2$ are two points in $X$ such that
  $Ric(x, t_0) \leq (m-1)K$ when $d_{g(t_0)}(x, x_1)<r_0$ or $d_{g(t_0)}(x,x_2)<r_0$. Then
  \begin{align}
    \left.  \frac{d}{dt} d_{g(t)}(x_1, x_2) \right|_{t=t_0} \geq
    \frac{1}{2} \lambda_0 d_{g(t_0)}(x_1, x_2) -(m-1)\left( \frac{2}{3} Kr_0 +r_0^{-1} \right).
    \label{eqn:dgoup}
  \end{align}
  \label{prn:dgoup}
\end{proposition}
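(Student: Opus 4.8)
The plan is to run the classical Hamilton--Perelman argument: differentiate the length of a fixed minimal geodesic under the flow, and control the resulting integral of $Ric$ along the geodesic by the non-negativity of the index form, which is exactly where the hypothesis that $Ric\le(m-1)K$ holds only near the two endpoints enters.

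Concretely, fix $t_0$ and let $\gamma:[0,d]\to X$ be a minimal geodesic from $x_1$ to $x_2$, parametrized by $g(t_0)$-arclength, $d=d_{g(t_0)}(x_1,x_2)$; it exists since $X$ is closed. As $X$ is compact and $g(\cdot)$ is smooth, $\partial_t g$ is bounded on $X\times[0,1]$, so $t\mapsto d_{g(t)}(x_1,x_2)$ is locally Lipschitz in $t$; I will carry out the computation at a $t_0$ where it is differentiable (a set of full measure), reading the inequality at the remaining times in the one-sided/barrier sense as in~\cite{Ha3},~\cite{Pe1}. Since $d_{g(t)}(x_1,x_2)\le L_{g(t)}(\gamma)$ for every $t$, with equality at $t_0$, the non-negative function $t\mapsto L_{g(t)}(\gamma)-d_{g(t)}(x_1,x_2)$ has a minimum at $t_0$; hence, using the first variation of length together with $\partial_t g=-Ric+\lambda_0 g$ and $g(t_0)(\gamma',\gamma')\equiv 1$,
\begin{align*}
  \frac{d}{dt}\Big|_{t_0} d_{g(t)}(x_1,x_2)
  \;=\; \frac{d}{dt}\Big|_{t_0} L_{g(t)}(\gamma)
  \;=\; -\frac12\int_0^d Ric(\gamma',\gamma')\,ds\;+\;\frac{\lambda_0}{2}\,d .
\end{align*}
The term $\tfrac{\lambda_0}{2}\,d$ is exactly the $\tfrac12\lambda_0 d_{g(t_0)}(x_1,x_2)$ appearing in~(\ref{eqn:dgoup}), so everything reduces to
\begin{align*}
  \int_0^d Ric(\gamma',\gamma')\,ds\;\le\;2(m-1)\Big(\tfrac23 Kr_0+r_0^{-1}\Big).
\end{align*}

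For this I would invoke the second-variation inequality for the minimal geodesic $\gamma$: for any piecewise-$C^1$ scalar $f$ on $[0,d]$ with $f(0)=f(d)=0$ and any parallel unit normal field $E$ along $\gamma$, $\int_0^d\big((f')^2-f^2\langle R(E,\gamma')\gamma',E\rangle\big)\,ds\ge 0$; summing over an orthonormal basis of $(\gamma')^{\perp}$ gives $\int_0^d f^2 Ric(\gamma',\gamma')\,ds\le(m-1)\int_0^d (f')^2\,ds$. Choose the cutoff $f(s)=\min\{\,s/r_0,\ (d-s)/r_0,\ 1\,\}$, a trapezoid equal to $1$ on $[r_0,d-r_0]$ when $d\ge 2r_0$ and a tent of height $<1$ when $d<2r_0$; a short computation gives $\int_0^d(f')^2\,ds\le 2/r_0$ and $\int_0^d(1-f^2)\,ds\le\tfrac43 r_0$ in either case. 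Since $1-f^2\ge 0$ is supported where $\gamma$ stays within distance $r_0$ of $x_1$ or of $x_2$ (and when $d<2r_0$ all of $\gamma$ lies within $r_0$ of $x_1$ or $x_2$), there $Ric(\gamma',\gamma')\le(m-1)K$, so
\begin{align*}
  \int_0^d Ric(\gamma',\gamma')\,ds
  &= \int_0^d f^2 Ric(\gamma',\gamma')\,ds+\int_0^d(1-f^2)Ric(\gamma',\gamma')\,ds\\
  &\le (m-1)\int_0^d(f')^2\,ds+(m-1)K\int_0^d(1-f^2)\,ds
  \;\le\;2(m-1)\Big(\tfrac1{r_0}+\tfrac23 Kr_0\Big),
\end{align*}
which together with the previous display yields~(\ref{eqn:dgoup}).

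I expect the only genuinely delicate point to be the low time-regularity of $d_{g(t)}(x_1,x_2)$: it is merely Lipschitz in $t$, so the step ``$\tfrac{d}{dt}d=\tfrac{d}{dt}L(\gamma)$'' must be handled at a.e. time (or via upper/lower barriers, or one-sided Dini derivatives), and one must be sure $\gamma$ is minimizing on all of $[0,d]$ so that the index form is non-negative. A minor additional nuisance is the short-geodesic regime $d<2r_0$, where a naive ``$=1$ in the middle'' cutoff degenerates; the tent cutoff above keeps both $\int(f')^2$ and $\int(1-f^2)$ bounded and disposes of it. Everything else is routine.
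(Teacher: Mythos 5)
Your proof is correct, but it takes a genuinely different route from the paper's. The paper's proof is a one-line reduction: setting $t_0=0$ without loss of generality, it applies the time-reparametrization (\ref{eqn:normalization}), which turns the normalized flow $\partial_t g = -Ric+\lambda_0 g$ into an unnormalized flow $\partial_s\tilde g=-2Ric(\tilde g)$ with $\tilde g(0)=g(0)$, and then cites Perelman's Lemma 8.3(b) as a black box; the factor $\tfrac{dt}{ds}\big|_{0}=2$ and the scaling $d_{\tilde g(s)}=\sqrt{1-2\lambda_0 s}\,d_{g(t(s))}$ produce exactly the extra $\tfrac12\lambda_0 d$ term. You instead re-derive the underlying estimate from scratch: first variation of the length of a fixed minimal geodesic (directly for the normalized flow, so the $\lambda_0 g$ term contributes $\tfrac12\lambda_0 d$ with no reparametrization bookkeeping), then the index-form inequality $\int f^2 Ric(\gamma',\gamma')\,ds\le (m-1)\int (f')^2\,ds$ with the trapezoid/tent cutoff to bound $\int_0^d Ric(\gamma',\gamma')\,ds\le 2(m-1)\bigl(\tfrac23 Kr_0+r_0^{-1}\bigr)$. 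This is, of course, precisely the content of Perelman's Lemma 8.3(b) and of Hamilton's Section 17 argument, so the underlying mathematics is the same; what you trade is the rescaling bookkeeping for a self-contained second-variation computation. The paper's approach is shorter and leans on a standard reference; yours is more transparent about where each term of the bound comes from and handles the short-geodesic case $d<2r_0$ explicitly, which the reference leaves implicit. One small convention to flag: the step $(m-1)K\int(1-f^2)\,ds\le \tfrac43(m-1)Kr_0$ uses $K\ge 0$ (for $K<0$ the argument only yields the weaker conclusion with $K$ replaced by $0$); this is the same implicit sign convention as in Perelman's lemma and is harmless in the paper's applications, where $K$ is always positive, but it is worth stating if you want the lemma verbatim for all $K$.
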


\begin{proof}
 Without loss of generality, one can assume $t_0=0$.  Then the proof is just an application of the renormalization equation (\ref{eqn:normalization}) and Lemma 8.3(b) of~\cite{Pe1}.
\end{proof}

Suppose $\Omega$ is a compact manifold with boundary. The following lemmas are standard (c.f.~\cite{LiNotes}).

\begin{lemma}
 Suppose $(X, g)$ is a complete manifold, $x_0 \in X$, $0<r\leq 1$. Suppose $r^{-m}|B(x_0,r)| \geq \kappa$ and $r^2 Ric \geq -(m-1)$ in $B(x_0, 2r)$.
 Let $\Omega=B(x_0, r)$. Then the following properties are satisfied.
\begin{itemize}
 \item The isoperimetric constant of $\Omega$ is uniformly bounded by $C_I=C_I(m,\kappa)$.
 \item The Sobolev constant of $\Omega$ is uniformly bounded by $C_S=C_S(m,\kappa)$.
 \item The Neuman Poincar\'e constant of $\Omega$ is uniformly bounded by $C_P=C_P(m,\kappa)$.
\end{itemize}
\label{lma:A18_1}
\end{lemma}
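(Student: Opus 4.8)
The hypotheses are scale invariant: replacing $g$ by $r^{-2}g$ turns them into $|B(x_0,1)|\ge\kappa$ and $Ric\ge-(m-1)$ on $B(x_0,2)$, while the isoperimetric, Sobolev and Neumann--Poincar\'e constants are themselves scale invariant. So the plan is to first normalize $r=1$ and then produce bounds depending only on $m$ and $\kappa$.

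The one geometric input is Bishop--Gromov relative volume comparison, which is licensed on $B(x_0,2)$ by the Ricci bound there. For $x\in B(x_0,1)$ and $0<s_1\le s_2\le1$ one has $B(x,s_2)\subset B(x_0,2)$, so comparing with the space form of curvature $-1$ gives, first, the \emph{upper} bound $|B(x,s_2)|\le\bigl(\mathcal V_{-1}(s_2)/\mathcal V_{-1}(s_1)\bigr)|B(x,s_1)|$, hence volume doubling $|B(x,2s)|\le C(m)|B(x,s)|$ for $2s\le1$; and, second, for $x\in B(x_0,1/2)$ the inclusions $B(x_0,1)\subset B(x,3/2)\subset B(x_0,2)$ together with $|B(x_0,1)|\ge\kappa$ give the uniform non-collapsing $|B(x,s)|\ge c(m,\kappa)\,s^m$ at every scale $s\le1$. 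The mild shrinking of the admissible centers (and, later, of the admissible radii) is forced by the Ricci bound living only on the double ball and is absorbed by a routine covering/chaining argument. Thus $\Omega$ is, up to these harmless adjustments, a doubling space whose sub-balls are Euclidean non-collapsed, with all constants controlled by $m$ and $\kappa$.

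Next I would extract the Neumann--Poincar\'e inequality. A lower Ricci bound by itself already yields a weak (enlarged-ball) $L^1$ Poincar\'e inequality on balls inside $B(x_0,2)$, namely $\frac{1}{|B(x,s)|}\int_{B(x,s)}|f-f_{B(x,s)}|\le C(m)\,s\,\frac{1}{|B(x,2s)|}\int_{B(x,2s)}|\nabla f|$ (with $f_B$ the mean of $f$ over $B$); this is classical, following from Bishop--Gromov together with the segment (Buser-type) inequality. Feeding this together with doubling into Jerison's lemma upgrades it to a genuine Poincar\'e inequality on $\Omega$ with the same ball on both sides, so $C_P=C_P(m,\kappa)$. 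With doubling and a Poincar\'e inequality available, the standard theory of Sobolev inequalities on doubling metric--measure spaces supporting a Poincar\'e inequality (Saloff-Coste, Haj\l asz--Koskela) produces an $L^p$ Sobolev inequality on $\Omega$; the non-collapsing $|B(x,s)|\ge c\,s^m$ is exactly what forces the Sobolev exponent to the Euclidean value $\frac{m}{m-1}$, giving $C_S=C_S(m,\kappa)$. Finally, the $L^1$ form of this Sobolev inequality is, through the coarea formula, equivalent to the relative isoperimetric inequality for $\Omega$, which is the assertion about $C_I=C_I(m,\kappa)$ (one may equally well run this equivalence in the other direction and start from the isoperimetric inequality).

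There is no new idea here beyond assembling standard facts, and the main obstacle is purely bookkeeping: keeping track of the successive universal shrinkings of radii introduced by the enlarged balls in the volume comparison, in the weak Poincar\'e inequality, and in Jerison's lemma, and verifying that the Ricci bound on $B(x_0,2r)$ leaves enough room to close everything on $B(x_0,r)$. A reader willing to take these reductions on faith can simply cite the corresponding statements in~\cite{LiNotes}.
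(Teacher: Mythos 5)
Your proposal is correct. The paper itself gives no proof of this lemma---it merely states that it is ``standard'' and cites Li's lecture notes---so the relevant comparison is with that standard treatment. Your scaling reduction and use of Bishop--Gromov for volume doubling and non-collapsing are exactly what one would expect, and every step you invoke (the Buser/segment weak $L^1$ Poincar\'e inequality, Jerison's self-improving lemma, the Saloff-Coste--Grigor'yan / Haj\l asz--Koskela equivalence of doubling $+$ Poincar\'e with Sobolev, and the coarea equivalence of $L^1$ Sobolev with relative isoperimetric) is indeed standard with constants depending only on $m$ and $\kappa$. One small stylistic remark: the classical route in Li's notes (and in Croke and Gromov, which Li follows) typically runs in the opposite direction---one proves the isoperimetric bound \emph{first} by a direct geometric argument from the Ricci lower bound and the non-collapsing, then deduces the Sobolev inequality via the coarea formula, and finally the Neumann Poincar\'e inequality from Sobolev. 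Your chain (weak Poincar\'e $\Rightarrow$ Poincar\'e $\Rightarrow$ Sobolev $\Rightarrow$ isoperimetric) is the more modern ``analysis on metric measure spaces'' packaging; both are correct, the classical one is perhaps a bit more self-contained since it does not need the Jerison/Haj\l asz--Koskela machinery, while yours generalizes more readily beyond the smooth setting.
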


\begin{lemma}
  Suppose $(X, g)$ is a complete Riemannian manifold, $x_0 \in X$. Suppose the following conditions are satisfied.
  \begin{itemize}
    \item For every $0<r<2$, we have $C_{V}^{-1}<|B(x_0, r)|r^{-m}<C_{V}$.
    \item The Sobolev constant of $B(x_0, 2)$ is bounded by $C_S$.
    \item The Poincar\'e constant of $B(x_0, 2)$ is bounded by $C_P$.
    \item $|a|+|\psi|<C_{F}$ on $B(x_0, 2)$.
  \end{itemize}
  Suppose $\varphi \geq 0$ satisfies the inequality $\left( -\Delta + a \right) \varphi \geq \psi$ in the distribution sense, then
  \begin{align}
    \int_{B(x_0, 1)} \varphi \leq C \left( 1+ \inf_{B\left(x_0, \frac{1}{2}\right)} \varphi \right),
    \label{eqn:L31_2}
  \end{align}
  where $C=C(m,C_V,C_S,C_P,C_F)$. Consequently, for every $0<\rho<1$, we have
 \begin{align}
   \rho^{-m}\int_{B(x_0, \rho)} \varphi \leq C \left( \rho^2 + \inf_{B\left(x_0, \frac{\rho}{2}\right)} \varphi \right),
    \label{eqn:L31_3}
  \end{align}
  where $C$ is the same constant as in (\ref{eqn:L31_2}).
  \label{lma:L31_1}
\end{lemma}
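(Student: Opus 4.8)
The plan is to deduce both inequalities from the Riemannian De Giorgi--Nash--Moser weak Harnack inequality for nonnegative supersolutions, the point being that under the four structural hypotheses (volume doubling, Sobolev, Poincar\'e, and the bound $C_F$ on $a$ and $\psi$) the constant in that inequality is controlled purely by $m, C_V, C_S, C_P, C_F$; this is precisely the kind of estimate recorded in~\cite{LiNotes}. The only preliminary issue is that the zeroth order term $a$ is not sign definite, so I would first absorb it. Writing $a = a_+ - a_-$ with $a_\pm \geq 0$ and $a_\pm \leq C_F$, and testing the distributional inequality $(-\Delta + a)\varphi \geq \psi$ against $0 \leq \eta \in C_c^\infty(B(x_0,2))$,
\begin{align*}
 \int_{B(x_0,2)} \left( \langle \nabla\varphi, \nabla\eta \rangle + a_+ \varphi\, \eta \right) d\mu
 &= \int_{B(x_0,2)} \left( \langle \nabla\varphi, \nabla\eta \rangle + a\varphi\, \eta + a_- \varphi\, \eta \right) d\mu \\
 &\geq \int_{B(x_0,2)} \left( \psi + a_- \varphi \right) \eta\, d\mu \; \geq \; - C_F \int_{B(x_0,2)} \eta\, d\mu ,
\end{align*}
using $a_-\varphi\eta \geq 0$ and $\psi \geq -C_F$. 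Hence $\varphi \geq 0$ is, on $B(x_0,2)$, a weak supersolution of $-\Delta\varphi + a_+\varphi \geq -C_F$ with $0 \leq a_+ \leq C_F$; handling the merely distributional hypothesis by the usual truncation $\varphi \mapsto \min\{\varphi,k\}$ lets one assume $\varphi \in W^{1,2}_{loc}(B(x_0,2))$, which is all the Moser machinery needs.

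Next I would apply the weak Harnack inequality on $B(x_0,2)$, the ball carrying all the structural bounds. Volume doubling gives the Dirichlet Sobolev inequality on $B(x_0,2)$---hence, by extension by zero, on every subball---with constant $\leq C_S$, and the Neumann--Poincar\'e constant is $\leq C_P$; so the Moser iteration for negative and for small positive powers of $\varphi$, together with the logarithmic ($\mathrm{BMO}$, John--Nirenberg) estimate bridging the two, runs with all constants depending only on $m, C_V, C_S, C_P, C_F$. Taking the exponent $p=1$, which is admissible because $1 < \frac{m}{m-2}$ for $m \geq 3$, this produces
\begin{align*}
 \frac{1}{|B(x_0,1)|}\int_{B(x_0,1)} \varphi\, d\mu \;\leq\; C \left( \inf_{B(x_0,1/2)} \varphi + C_F \right), \qquad C = C(m,C_V,C_S,C_P,C_F).
\end{align*}
Multiplying by $|B(x_0,1)| \leq C_V$ and absorbing the structural constant $C_F$ into $C$ gives~(\ref{eqn:L31_2}).

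For~(\ref{eqn:L31_3}) I would rescale: fix $0 < \rho < 1$, replace $g$ by $\tilde g = \rho^{-2} g$, so $B_{\tilde g}(x_0,s) = B_g(x_0,\rho s)$ and $d\mu_{\tilde g} = \rho^{-m} d\mu_g$. The volume ratio bounds survive for all $s < 2$ with the same $C_V$; the Dirichlet Sobolev inequality is scale invariant in this normalization and monotone under restriction, so it still holds on $B_{\tilde g}(x_0,2) = B_g(x_0,2\rho)$ with constant $\leq C_S$; and the Poincar\'e constant of $B_{\tilde g}(x_0,2)$ remains $\leq C_P$ (in the geometric situations where this lemma is used these constants come from Lemma~\ref{lma:A18_1}, which is scale covariant). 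In $\tilde g$ the hypothesis becomes $(-\Delta_{\tilde g} + \rho^2 a)\varphi \geq \rho^2 \psi$ with $|\rho^2 a| + |\rho^2\psi| \leq \rho^2 C_F \leq C_F$, so the argument above applies verbatim, except that the inhomogeneity is now $-\rho^2 C_F$ and the additive error it generates is proportional to $\rho^2 C_F$, not $C_F$:
\begin{align*}
 \rho^{-m}\int_{B_g(x_0,\rho)} \varphi\, d\mu_g \;=\; \int_{B_{\tilde g}(x_0,1)} \varphi\, d\mu_{\tilde g} \;\leq\; C_V\, C \left( \inf_{B_{\tilde g}(x_0,1/2)} \varphi + \rho^2 C_F \right) \;\leq\; C \left( \rho^2 + \inf_{B_g(x_0,\rho/2)} \varphi \right),
\end{align*}
with the same $C$ as before ($C$ is monotone in the coefficient bound and $\rho^2 C_F \leq C_F$, and $C_F$ is absorbed at the last step). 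This is~(\ref{eqn:L31_3}).

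The analytic content is entirely the weak Harnack inequality with constants controlled by $m,C_V,C_S,C_P,C_F$ alone, which is standard, so the real work is bookkeeping. The one place that demands care is the rescaling: one must invoke the Sobolev and Poincar\'e bounds in their scale covariant forms so that they transfer to $B_g(x_0,2\rho)$, while noting that the coefficient bound $C_F$ scales by a factor $\rho^2$---and it is exactly this factor that upgrades the additive ``$1$'' of~(\ref{eqn:L31_2}) to the ``$\rho^2$'' of~(\ref{eqn:L31_3}).
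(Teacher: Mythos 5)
Your argument is correct and reaches the same conclusion via the same analytic engine (De Giorgi--Nash--Moser weak Harnack with constants controlled by the four structural quantities), with two small bookkeeping variants relative to the paper. For the first inequality, the paper homogenizes the differential inequality by the shift $\bar{\varphi}=\varphi+C_F$, which absorbs both the inhomogeneity $\psi$ and the sign-indefinite zeroth-order term into the coefficient of $\bar\varphi$ and thereby invokes only the homogeneous mean value inequality $\int_{B(x_0,1)}\bar\varphi\le C\inf_{B(x_0,1/2)}\bar\varphi$ from Li's notes; you instead split $a=a_+-a_-$, drop the helpful $a_-\varphi\ge 0$ term, and quote the inhomogeneous weak Harnack directly. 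Both are standard and yield $C=C(m,C_V,C_S,C_P,C_F)$. For the rescaling step, the paper sets $\tilde g=\rho^{-2}g$ \emph{and} $\tilde\varphi=\rho^{-2}\varphi$, so that $\tilde\varphi$ satisfies exactly the hypotheses of the first inequality with the same $C_F$ and the $\rho^2$ factor appears when undoing $\tilde\varphi\mapsto\varphi$; you rescale only the metric, keep $\varphi$ unchanged, and extract the $\rho^2$ from the rescaled inhomogeneity bound $|\rho^2\psi|\le\rho^2 C_F$. These two routes are equivalent; the paper's choice has the small advantage that the hypotheses of the first inequality apply verbatim (no appeal to monotonicity of the Harnack constant in the coefficient bound is needed), while yours makes the origin of the $\rho^2$ gain more transparent. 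In either case you should, as the paper implicitly does, note that the Sobolev and Poincar\'e constants transfer to $B_{\tilde g}(x_0,2)=B_g(x_0,2\rho)$ with the same bounds, which you address.
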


\begin{proof}
  Let $\bar{\varphi}=\varphi+C_F$. We compute
  \begin{align*}
    \left( -\Delta + a \right) \bar{\varphi} \geq \psi+aC_F=C_F\left( a+C_F^{-1}\psi \right) \geq
    -C_F\left|a+C_F^{-1}\psi \right|  \geq - \left|a+C_F^{-1}\psi \right| \bar{\varphi} \geq -\left( |a|+1 \right) \bar{\varphi}.
  \end{align*}
  It follows
  $$\Delta \bar{\varphi} \leq \left( 2|a|+1 \right) \bar{\varphi} \leq \left( 2C_F+1 \right) \bar{\varphi}.$$
  By the standard De Giorgi-Nash-Moser iteration (c.f. Lemma 11.2 of~\cite{LiNotes}), we have
  $$\displaystyle \int_{B(x_0, 1)} \bar{\varphi} \leq C \inf_{B\left(x_0, \frac{1}{2} \right)} \bar{\varphi}$$
  for some $C$ depending on $m,C_V,C_S,C_P$ and $C_F$. This in turn implies (\ref{eqn:L31_2}).

  Fix $0<\rho<1$. Let $\tilde{g}=\rho^{-2}g$.  By the scaling property of the Laplacian operator, we see that
  \begin{align*}
   \left( -\Delta + a \right) \varphi \geq \psi \Leftrightarrow
   -\rho^{-2} \Delta_{\tilde{g}} \varphi + a \varphi \geq \psi
   \Leftrightarrow -\Delta_{\tilde{g}} \left( \rho^{-2} \varphi \right) + \rho^2 a \left( \rho^{-2} \varphi \right) \geq \psi.
  \end{align*}
  Let $\tilde{\varphi}=\rho^{-2} \varphi$, we have
  $$\displaystyle -\Delta_{\tilde{g}} \tilde{\varphi} + \rho^2 a \tilde{\varphi} \geq \psi.$$
  Consider this system under the metric $\tilde{g}$. The four estimates hold for this new system, so we obtain
  $$\displaystyle \int_{B_{\tilde{g}}(x_0, 1)} \tilde{\varphi} \leq C \left( 1+ \inf_{B_{\tilde{g}}\left( x_0, \frac{1}{2} \right)} \tilde{\varphi} \right),$$
  which is the same as (\ref{eqn:L31_3}) since $\tilde{\varphi}=\rho^{-2} \varphi$.
\end{proof}

Combing Lemma~\ref{lma:A18_1} and Lemma~\ref{lma:L31_1}, we obtain the following Proposition, which is very useful in the study of boundary estimate.

\begin{proposition}
  Suppose $(X, x_0, g)$ is a complete Riemannian manifold.
  Suppose $0<r \leq 1$,  $r^{-m}|B(x_0,r)| \geq \kappa$,  $r^2 Ric \geq -(m-1)$ on $B(x_0, 2r)$.
  Suppose $\varphi \geq 0$ satisfies the inequality $\left( -\Delta + a \right) \varphi \geq \psi$ for $|a|+|\psi|<C_F$. Then
  for every $0<\rho \leq r$, we have
 \begin{align}
   \rho^{-m}\int_{B(x_0, \rho)} \varphi \leq C \left( \rho^2 + \inf_{B\left(x_0, \frac{\rho}{2}\right)} \varphi \right),
   \label{eqn:A18_1}
  \end{align}
 for some constant $C=C(m,\kappa, C_F)$.
  \label{prn:A18_2}
\end{proposition}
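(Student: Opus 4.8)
The plan is to obtain Proposition~\ref{prn:A18_2} by combining Lemma~\ref{lma:A18_1} and Lemma~\ref{lma:L31_1} through a rescaling. Fix $0<\rho\le r$ and set $\tilde g=\rho^{-2}g$, $\tilde\varphi=\rho^{-2}\varphi$. Since $\Delta_{\tilde g}=\rho^2\Delta_g$, the inequality $(-\Delta+a)\varphi\ge\psi$ transforms into $(-\Delta_{\tilde g}+\rho^2 a)\tilde\varphi\ge\psi$, and since $\rho\le r\le 1$ the lower order data still satisfies $|\rho^2 a|+|\psi|\le|a|+|\psi|<C_F$. Because $B_{\tilde g}(x_0,s)=B_g(x_0,\rho s)$ and $d\mu_{\tilde g}=\rho^{-m}\,d\mu_g$, after multiplying by $\rho^2$ and undoing the scaling the target inequality (\ref{eqn:A18_1}) at radius $\rho$ becomes precisely estimate (\ref{eqn:L31_2}) for $(X,\tilde g,x_0)$ and $\tilde\varphi$; this is the same device used inside Lemma~\ref{lma:L31_1} to pass from (\ref{eqn:L31_2}) to (\ref{eqn:L31_3}), and it is what produces the clean $\rho^2$ term. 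So the whole matter reduces to verifying the four hypotheses of Lemma~\ref{lma:L31_1} for $(X,\tilde g,x_0)$ with constants controlled only by $m,\kappa,C_F$.

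First I would check the volume condition. For $0<s<2$ we have $|B_{\tilde g}(x_0,s)|_{\tilde g}\,s^{-m}=|B_g(x_0,\rho s)|_g\,(\rho s)^{-m}$ with $\rho s<2\rho\le 2r$, so the hypothesis $r^2 Ric\ge-(m-1)$ is in force on these balls; Bishop-Gromov comparison with the space form of curvature $-r^{-2}$ gives the two sided bound $c(m)\kappa\le|B_g(x_0,\tau)|_g\,\tau^{-m}\le C(m)$ for all $\tau\le 2r$ (the lower bound also using $|B_g(x_0,r)|_g\ge\kappa r^m$ and monotonicity of volume ratios), which is condition one with $C_V=C_V(m,\kappa)$. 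Next, Lemma~\ref{lma:A18_1} applied to $(X,\tilde g,x_0)$ with radius $1$ --- legitimate because the normalized volume of $B_{\tilde g}(x_0,1)$ is $\ge c(m)\kappa$ by the previous line and $Ric(\tilde g)=Ric(g)\ge-(m-1)(\rho/r)^2\tilde g\ge-(m-1)\tilde g$ on $B_{\tilde g}(x_0,2)$ --- bounds the isoperimetric, Sobolev and Poincar\'e constants of $B_{\tilde g}(x_0,1)$ by $C(m,\kappa)$; this is exactly what the De Giorgi-Nash-Moser iteration behind (\ref{eqn:L31_2}) uses, so conditions two and three hold, and condition four is the bound $|\rho^2 a|+|\psi|<C_F$ noted above. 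Lemma~\ref{lma:L31_1} then applies and gives (\ref{eqn:L31_2}) for $(X,\tilde g,x_0)$ with $C=C(m,\kappa,C_F)$; translating back as in the first paragraph yields (\ref{eqn:A18_1}) at radius $\rho$, and since $\rho\in(0,r]$ was arbitrary the Proposition follows.

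The only substantive ingredient here is the Bishop-Gromov comparison behind the two sided volume ratio bound; everything else is a change of variables. The step I expect to require the most care is the bookkeeping of ball radii: Lemma~\ref{lma:A18_1} controls the functional constants of a ball only from a Ricci bound on its double, and Lemma~\ref{lma:L31_1} is stated with those constants on $B(x_0,2)$ even though its proof only operates on the unit ball, so one must track precisely on which concentric balls each estimate is invoked after rescaling and check that each still sits inside the region $B_g(x_0,2r)$ where the Ricci hypothesis is available (which it does, since every ball that enters is contained in $B_{\tilde g}(x_0,2)=B_g(x_0,2\rho)\subseteq B_g(x_0,2r)$). One must also make sure no constant secretly depends on $r$ or $\rho$; this is guaranteed precisely by $r\le 1$ and $\rho\le r$, which keep the rescaled lower order bound $\le C_F$ and all the geometric quantities scale invariant.
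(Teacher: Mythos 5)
Your proposal is correct and follows the route the paper intends: the paper gives no explicit proof, stating only that the Proposition results from combining Lemma~\ref{lma:A18_1} and Lemma~\ref{lma:L31_1}, and you have filled in exactly that combination. Your rescaling $\tilde g=\rho^{-2}g$, $\tilde\varphi=\rho^{-2}\varphi$, together with the transformed inequality $(-\Delta_{\tilde g}+\rho^2 a)\tilde\varphi\ge\psi$, is the same device used inside Lemma~\ref{lma:L31_1} to pass from (\ref{eqn:L31_2}) to (\ref{eqn:L31_3}); the Bishop--Gromov two-sided volume ratio bound and the application of Lemma~\ref{lma:A18_1} to the rescaled ball supply the hypotheses of Lemma~\ref{lma:L31_1} with constants depending only on $m,\kappa,C_F$; and your remark that Lemma~\ref{lma:L31_1}'s formal reference to $B(x_0,2)$ is harmless because the weak Harnack iteration only touches the unit ball (so every relevant ball after rescaling lies inside $B_g(x_0,2\rho)\subset B_g(x_0,2r)$) resolves the one bookkeeping point one could worry about.
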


\section{A pseudo-locality theorem}

Under the Ricci flow, an ``almost-Euclidean" region cannot become singular suddenly.  This is the principle of pseudo-locality as stated by Perelman in section 10 of~\cite{Pe1}.
Perelman developed some pseudo-locality theorems by regarding  ``almost'' as close of isoperimetric constant and scalar lower bound.
Of course, this is not the unique ``almost-Euclidean'' condition.   In this section, we will develop similar pseudo-locality properties by explaining  ``almost-Euclidean" balls
as balls whose volume ratio and Ricci lower bound is close to that of the  Euclidean balls'.

\begin{proposition}[A pseudo-locality property, compare Theorem 10.1 and Theorem 10.3 of Perelman~\cite{Pe1}]
For every $0<\alpha<\frac{1}{100m}$, there exist constants $\delta=\delta(m, \alpha), \epsilon=\epsilon(m, \alpha)$ with the following properties.

  Suppose $\left\{ (X, g(t)), 0 \leq t \leq 1 \right\}$ is a Ricci flow solution, $x_0 \in X$. Suppose
  \begin{align}
    &Ric(x, 0) \geq -(m-1)\delta^4,  \quad \forall \; x \in B_{g(0)}\left(x_0,  \delta^{-1} \right). \label{eqn:pseudocondition_1}\\
    &\delta^{m} \left|B_{g(0)}\left(x_0, \delta^{-1} \right) \right|_{d\mu_{g(0)}} \geq (1-\delta) \omega_m.  \label{eqn:pseudocondition_2}
  \end{align}
  Then we have
  \begin{align}
    &\left| B_{g(t)} \left(x, \sqrt{t} \right) \right|_{d\mu_{g(t)}} \geq \kappa' t^{\frac{m}{2}}, \label{eqn:pseudo_kappa} \\
    &|Rm|(x, t) \leq \alpha t^{-1} +\epsilon^{-2}, \quad \forall \; x\in B_{g(0)} \left(x_0, \epsilon \right), \; t \in (0, \epsilon^2],
    \label{eqn:pseudo_ricci}
  \end{align}
  where $\kappa'=\kappa'(m)$ is a universal constant.
  \label{prn:pseudo-locality_L30}
\end{proposition}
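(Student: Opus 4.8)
The plan is to follow Perelman's proof of Theorem 10.1 in \cite{Pe1} essentially verbatim, replacing his hypotheses on the isoperimetric constant by the volume-ratio and Ricci-lower-bound hypotheses (\ref{eqn:pseudocondition_1})--(\ref{eqn:pseudocondition_2}). The first step is to run Perelman's point-selection/contradiction argument: suppose the curvature conclusion (\ref{eqn:pseudo_ricci}) fails for a sequence of counterexamples $\delta_k\to 0$, $\epsilon_k\to 0$. Apply Perelman's parabolic point-picking lemma (Claim 1 and Claim 2 in the proof of Theorem 10.1 of \cite{Pe1}) to find bad points $(\bar x_k,\bar t_k)$ where $|Rm|(\bar x_k,\bar t_k)=Q_k$ is large, $Q_k\bar t_k>\alpha$, and such that $|Rm|\le 4Q_k$ on a suitable backward parabolic neighborhood whose parabolic size is controlled in terms of $Q_k$. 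The key analytic input is a lower volume bound along the flow at controlled scales; here one uses that the Ricci-lower-bound hypothesis at $t=0$ propagates forward for a definite time (via the maximum principle applied as in (\ref{eqn:A16_1}), or rather its unnormalized analogue), so that Perelman's pseudo-locality machinery---which only needs some uniform scale-invariant lower volume bound to get $\kappa$-noncollapsing and hence (\ref{eqn:pseudo_kappa})---can be invoked. The noncollapsing constant $\kappa'$ comes out universal because the only scale-invariant geometric quantity entering is the Euclidean volume ratio.

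The heart of the argument, exactly as in \cite{Pe1}, is then the logarithmic-Harnack / reduced-volume estimate: one considers Perelman's reduced volume $\tilde V$ based at a point just before the singularity time, or alternatively his ``$A(t)$'' monotone quantity built from the $\mathcal{W}$-functional restricted to a ball. The hypothesis that the initial ball is volume-ratio close to Euclidean forces the reduced volume (equivalently, $\mathcal{W}$) at time $0$ to be close to its Euclidean value $0$; monotonicity then pins it near $0$ for all later times up to the bad point. After parabolic rescaling by $Q_k$ at the bad point $(\bar x_k,\bar t_k)$ and passing to a limit, one obtains (using the local curvature bound from point-picking plus Shi estimates, and the noncollapsing (\ref{eqn:pseudo_kappa}) to prevent degeneration) a nonflat ancient $\kappa$-solution, or more precisely a complete nonflat limit Ricci flow whose reduced volume is identically the Euclidean constant. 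By Perelman's rigidity (the equality case of reduced-volume monotonicity, Section 7 of \cite{Pe1}), such a limit must be flat Euclidean space, contradicting $|Rm|(\bar x_\infty,\bar t_\infty)=1\neq 0$. This contradiction establishes (\ref{eqn:pseudo_ricci}); (\ref{eqn:pseudo_kappa}) is obtained along the way from the noncollapsing part of the same argument.

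The main obstacle, and the place where the proof genuinely departs from \cite{Pe1}, is justifying that (\ref{eqn:pseudocondition_1})--(\ref{eqn:pseudocondition_2}) suffice to control Perelman's monotone quantities at $t=0$. Perelman uses a near-optimal isoperimetric inequality on the initial slice to bound $\mathcal{W}$ (or the entropy) from below near $0$; here we instead must deduce the needed bound from almost-Euclidean volume ratio together with $Ric\ge -(m-1)\delta^4$. One route is Lemma \ref{lma:A18_1}: the volume-ratio and Ricci conditions give a uniform isoperimetric/Sobolev constant on the initial ball, and a quantitative version (volume ratio within $\delta$ of Euclidean plus $Ric$ within $\delta^4$ of $0$ implies isoperimetric constant within $o_\delta(1)$ of Euclidean, by a Cheeger--Colding-type almost-rigidity argument on the fixed initial slice) recovers precisely the hypothesis Perelman needs. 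A second, perhaps cleaner, route is to work directly with the reduced volume $\tilde V$: by Bishop--Gromov applied to the initial metric with its Ricci lower bound, $\tilde V$ at small scales is bounded above by $(1+o_\delta(1))$ times the Euclidean value, and by the volume-ratio hypothesis it is also bounded below by $(1-o_\delta(1))$ times that value, so it is $o_\delta(1)$-close to Euclidean and monotonicity does the rest. Handling the passage to the limit flow---ensuring the limit is complete and that reduced volume passes to the limit correctly, for which one needs uniform noncollapsing and the local curvature bounds---is the other technical point requiring care, but it is by now standard. Finally, since $|\lambda_0|$ plays no role in this statement (the Proposition is about an unnormalized Ricci flow), no renormalization is needed here; the constant $\lambda_0$ enters only when this Proposition is later applied to solutions of (\ref{eqn:normalized}) via (\ref{eqn:normalization}).
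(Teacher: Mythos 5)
Your proposal takes a genuinely different route from the paper at the key step. Both arguments begin identically by importing Perelman's parabolic point-picking from the proof of Theorem~10.1 of~\cite{Pe1}, which (after rescaling and the conjugate-heat-kernel machinery) produces compactly supported functions $u_k=(2\pi)^{-m/2}e^{-f_k}$ on $B(x_k,1)$ with $\int u_k=1$ and $\int\{\tfrac12|\nabla f_k|^2+f_k-m\}u_k\le -\eta<0$; this is exactly where Perelman would invoke his near-Euclidean isoperimetric hypothesis. You propose to close the argument by either (a) showing that almost-Euclidean volume ratio plus $Ric\ge -(m-1)\delta^4$ yields a near-optimal isoperimetric constant (reducing to Perelman's statement verbatim), or (b) controlling the reduced volume directly by Bishop--Gromov. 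The paper does neither. Instead it proceeds \emph{variationally}: it minimizes the log-Sobolev functional $\mathcal{F}_k$ over $W_0^{1,2}(B(x_k,1))$ using Rothaus's theorem, obtains a minimizer $\varphi_k$ solving a nonlinear Euler--Lagrange equation with a uniformly bounded Lagrange multiplier $\lambda_k\le-\eta$, derives uniform $C^0$ (Moser iteration with the Sobolev bound from Lemma~\ref{lma:A18_1}) and Lipschitz (Cheng--Yau) estimates for $\varphi_k$, passes to the pointed Gromov--Hausdorff limit $X_\infty\cong\R^m$ via Colding's volume convergence theorem, shows with a boundary barrier argument (Claim~\ref{clm:L19_1}) and a Green-function approximation argument (Claim~\ref{clm:L19_2}) that the limit $\varphi_\infty\in W_0^{1,2}(\R^m)$ still satisfies the Euler--Lagrange equation and vanishes on $\partial B(x_\infty,1)$, and finally contradicts Gross's Euclidean logarithmic Sobolev inequality. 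This compactness-plus-rigidity-of-$\R^m$ argument is the paper's new technical contribution and entirely sidesteps the isoperimetric/reduced-volume question.

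Your route (a) has a concrete unaddressed gap, and it is one the paper itself flags in the Remark immediately following the Proposition: converting the hypotheses to a near-optimal isoperimetric constant works when the hypotheses hold \emph{globally} (Levy--Gromov), but here they hold only on the ball $B_{g(0)}(x_0,\delta^{-1})$, and the paper states that a local version of this reduction ``requires some local regularity results in geometric measure theory,'' which you invoke only as ``a Cheeger--Colding-type almost-rigidity argument'' without supplying it. Your route (b) is too vague to count as a proof: Perelman's reduced volume $\tilde V$ is defined via the reduced length of $\mathcal{L}$-geodesics over the whole backward flow, not by the geometry of the initial slice alone, so it does not follow from Bishop--Gromov on $g(0)$ that $\tilde V$ is $o_\delta(1)$-close to its Euclidean value; moreover, the reduced-volume version of pseudo-locality in~\cite{Pe1} is Theorem~10.3, which assumes a two-sided curvature bound on the initial ball rather than the one-sided Ricci bound hypothesized here, and you do not address that mismatch. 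Either route might be made rigorous with substantial additional work, but as written the essential analytic difficulty---the one the paper's variational argument is specifically designed to solve---is left open.
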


\begin{proof}
  We only prove (\ref{eqn:pseudo_ricci}).  The proof of (\ref{eqn:pseudo_kappa}) follows verbatim.

If the statement was false, we can find a sequence of $\delta_k, \epsilon_k \to 0$, $x_k \in X_k$ such that (\ref{eqn:pseudocondition_1}) and (\ref{eqn:pseudocondition_2}) hold.
However, (\ref{eqn:pseudo_ricci}) are violated.

Following the proof of Perelman's pseudo-locality theorem, we can find a sequence of functions $u_k$ which are compactly supported on $B(x_k, 1)$ and satisfy
(See the end of the proof of Theorem 10.1 of~\cite{Pe1}):
 \begin{align}
   & \int_{B(x_k, 1)} u_k  =1, \label{eqn:L19_1} \\
   & \int_{B(x_k, 1)} \left\{ \frac{1}{2} \left|\nabla f_k \right|^2 +f_k -m \right\} u_k \leq -\eta<0,  \label{eqn:L19_2}
 \end{align}
 where $u_k = (2\pi)^{-\frac{m}{2}} e^{-f_k}$. Of course, here we regard $d\mu_{g_k(0)}$ as the default measure.   Let $\bar{u}_k=\sqrt{u_k}$.  These equations can be written as
 \begin{align*}
   & \int_{B(x_k, 1)} \bar{u}_k^2 =1, \\
   & \int_{B(x_k, 1)} \left\{ 2|\nabla \bar{u}_k|^2 - 2 \bar{u}_k^2\log \bar{u}_k  - m\left( 1+\log \sqrt{2\pi} \right) \bar{u}_k^2 \right\}  \leq -\eta.
 \end{align*}
 Denote by $\mathcal{F}_k(u_k)$ the integral
 $$\int_{B(x_k, 1)} \left\{ 2|\nabla \bar{u}_k|^2 - 2 \bar{u}_k^2\log \bar{u}_k  -m\left( 1+\log \sqrt{2\pi} \right) \bar{u}_k^2 \right\}.$$
 Clearly, $\mathcal{F}_k$ is a functional on the
 space of functions $\bar{u} \in W_{0}^{1,2}(B(x_k, 1))$ satisfying $\int_{B(x_k, 1)} \bar{u}^2 =1$. By the result of Rothaus (\cite{ROS}),
 we see that $\mathcal{F}_k$ has a minimizer $\varphi_k$, which satisfies the Euler-Lagrange equation
 \begin{align}
     -2\Delta \varphi_k - 2 \varphi_k \log \varphi_k -m\left(1+\log \sqrt{2\pi} \right)\varphi_k = \lambda_k \varphi_k.
     \label{eqn:L19_3}
 \end{align}
 On one hand, by the choice of $\lambda_k$, we have
 $$\displaystyle \lambda_k = \mathcal{F}(\varphi_k) \leq \mathcal{F}(\bar{u}_k) \leq -\eta<0.$$
 On the other hand, integrating (\ref{eqn:L19_3}) over $B(x_k, 1)$ implies
 \begin{align}
   &\quad \lambda_k+m\left( 1+\log \sqrt{2\pi} \right) \nonumber\\
   &=\int 2|\nabla \varphi_k|^2 - 2 \int \varphi_k^2 \log \varphi_k \nonumber\\
   &\geq \int 2|\nabla \varphi_k|^2 -2\int \varphi_k^{2} \cdot \left( \frac{m}{2e}\varphi_k^{\frac{2}{m}} \right)
    \nonumber\\
   &=\int 2|\nabla \varphi_k|^2 - \frac{m}{e} \int \varphi_k \cdot \varphi_k^{\frac{m+2}{m}}. \label{eqn:B8_1}
 \end{align}
 In the third step, we used the fact $\log x \leq \frac{m}{2e}x^{\frac{2}{m}}$ for every positive $x$.
 Plug H\"older inequality into (\ref{eqn:B8_1}) yields
 \begin{align}
   &\quad \lambda_k+m\left( 1+\log \sqrt{2\pi} \right) \nonumber\\
   &\geq \int 2|\nabla \varphi_k|^2
   -\frac{m}{e}  \left(\int \varphi_k^{\frac{2m}{m-2}} \right)^{\frac{m-2}{2m}} \cdot \left( \int \varphi_k^2 \right)^{\frac{m+2}{2m}}
    \nonumber\\
   &=\int 2|\nabla \varphi_k|^2
   -2 \cdot \frac{m}{2e}  \left(\int \varphi_k^{\frac{2m}{m-2}} \right)^{\frac{m-2}{2m}} \nonumber\\
   &\geq \int 2|\nabla \varphi_k|^2 - \left\{ a^2 \left(\int \varphi_k^{\frac{2m}{m-2}} \right)^{\frac{m-2}{m}} +\frac{m^2}{4a^2e^2}  \right\},
   \label{eqn:L30_1}
 \end{align}
 where $a$ is a positive constant to be determined.  Apply Lemma~\ref{lma:A18_1}, we obtain uniform bound for the Sobolev constant of $B(x_k, 1)$.
 It follows that
 \begin{align}
   \left(\int_{B(x_k, 1)} \varphi_k^{\frac{2m}{m-2}} \right)^{\frac{m-2}{m}} \leq C_S \int_{B(x_k, 1)} \left( \varphi_k^2 + |\nabla \varphi_k|^2 \right).
   \label{eqn:L30_2}
 \end{align}
 Let $a^2=\frac{2}{C_S}$ and put (\ref{eqn:L30_2}) into (\ref{eqn:L30_1}), we obtain
 \begin{align}
   \lambda_k+m\left( 1+\log \sqrt{2\pi} \right)
   \geq  (2-a^2C_S) \int |\nabla \varphi_k|^2- \left( a^2C_S +\frac{m^2}{4a^2e^2} \right)=-\left( 2+\frac{m^2C_S}{8e^2} \right).
   \label{eqn:L30_3}
 \end{align}
 Recall that $\lambda_k \leq -\eta <0$,  from (\ref{eqn:L30_3}) we see that there exists a constant $C_{\lambda}$, which depends on $m, C_S$, such that
 \begin{align}
   |\lambda_k|< C_{\lambda}.      \label{eqn:L30_4}
 \end{align}
 Note that the Euler-Lagrangian equation of $\varphi_k$ can be written as
 \begin{align}
   -\Delta \varphi_k = \left( \frac{1}{2}\left(m+m\log \sqrt{2\pi}+\lambda_k\right) + \log \varphi_k \right) \varphi_k.
 \label{eqn:L29_4}
 \end{align}
 Define $\bar{\varphi}_k \triangleq \max \left\{ \varphi_k, 1 \right\}$.  Since $\log x \leq \frac{m}{2e} x^{\frac{2}{m}}$ for every $x>0$, it follows
 from (\ref{eqn:L29_4}) that $\bar{\varphi}_k$ satisfies the inequality
 \begin{align}
   -\Delta \bar{\varphi}_k \leq \frac{1}{2} \left( m+ m\log \sqrt{2\pi}+\lambda_k + \frac{m}{e} \bar{\varphi}_k^{\frac{2}{m}} \right) \bar{\varphi}_k
   \label{eqn:L30_5}
 \end{align}
 in the distribution sense.  Clearly, we can uniformly bound the $L^{m}(B(x_k,2))$-norm of
$$\displaystyle \frac{1}{2} \left(m+ m\log \sqrt{2\pi}+\lambda_k + \frac{m}{e} \bar{\varphi}_k^{\frac{2}{m}} \right),$$
where $m>\frac{m}{2}$. Note that $B(x_k, 2)$ has a uniform Sobolev constant $C_S$. Then the standard Moser iteration implies that
$$\displaystyle \norm{\bar{\varphi}_k}{C^0(B(x_k,1))} \leq C \norm{\bar{\varphi}_k}{L^2(B(x_k,2))} \leq C,$$
which in turn implies
 \begin{align}
   \norm{\varphi_k}{C^0(B(x_k,1))} \leq C_1=C_1(m, C_S, C_{\lambda}).
 \label{eqn:L19_4}
 \end{align}
 Recall that Ricci curvature is uniformly bounded from below on $B(x_k, 2)$,
 the estimate of Cheng-Yau (c.f.~\cite{ChengYau}, section 6 of~\cite{LiNotes}) implies that
 \begin{align}
   |\nabla \varphi_k(x)| \leq C_2(m, d(x, \partial B(x_k, 1))), \quad \forall x \in B(x_k, 1).
   \label{eqn:L19_5}
 \end{align}
 In view of the non-collapsed condition and Ricci lower bound, we have the convergence in the pointed Gromov-Hausdorff topology,
 \begin{align}
   \left( X_k, x_k, g_k(0) \right) \longright{Gromov-Hausdorff}  \left( X_{\infty}, x_{\infty}, g_{\infty} \right).
   \label{eqn:L29_7}
 \end{align}
 Combining (\ref{eqn:L19_4}), (\ref{eqn:L19_5}) and (\ref{eqn:L29_7}), we obtain a locally-Lipschitz limit function $\varphi_{\infty}$ on
 $B(x_{\infty}, 1) \subset X_{\infty}$ with $\displaystyle \norm{\varphi_{\infty}}{C^0(B(x_{\infty}, 1))} \leq C_1$.
 In general, it is hard to expect $\varphi_{\infty}$ to be better than a locally-Lipschitz function.
 However, by Theorem 0.8 of~\cite{Colding_volume}, we know that $X_{\infty}$ is isometric to the Euclidean space $\left( \R^m, g_{\E} \right)$,
 which has a lot of excellent properties. We will use these properties to show that $\varphi_{\infty}$ has much better regularity than a general locally Lipshitz function.\\

 \begin{claim}
   $\varphi_{\infty}$ can be extended to be a continuous function defined on $\overline{B(x_{\infty}, 1)}$ with
   \begin{align}
     \varphi_{\infty}|_{\partial B(x_{\infty}, 1)}=0.
   \label{eqn:clmL19_1}
   \end{align}
   \label{clm:L19_1}
 \end{claim}

 It suffices to show  $\displaystyle \lim_{r \to 0} \norm{\varphi_{\infty}}{L^{\infty}(B(w,r))}=0$ for arbitrary $w \in \partial B(x_{\infty}, 1)$.

 Fix arbitrary $w \in \partial B(x_{\infty}, 1)$. Suppose $w_k \in \partial B(x_k, 1)$ and $w_k \to w$ as $X_k$ converges to $X_{\infty}$.
 For brevity, define  $M_{d,k} \triangleq Osc_{B\left( w_k, d \right)}(\varphi_k)$. By trivial extension, we can look $\varphi_k$ as a function defined on the whole manifold $X_k$.
 Then define $\psi_{d,k} \triangleq M_{2d,k}-\varphi_k$.  In view of (\ref{eqn:L29_4}), it is easy to see that $\psi_{d,k}$ satisfies the inequality
 \begin{align}
   &\quad \left( -\Delta -\frac{1}{2} \left( m+ m\log \sqrt{2\pi} +\lambda_k \right) \right) \psi_{d,k}   \nonumber\\
   &= -\frac{M_{2d,k}\left(m+ m\log \sqrt{2\pi}+\lambda_k \right)}{2}- (M_{2d,k}-\psi_{d,k}) \log \left( M_{2d,k}-\psi_{d,k} \right) \nonumber\\
   &\geq -C_3=-C_3(m, C_S, C_{\lambda})
   \label{eqn:L29_6}
 \end{align}
 in the sense of distribution. In other words, $\psi_{d,k}$ is a super-solution of the corresponding elliptic system.   Clearly, in the ball
 $B(w_k, 4d) \subset B(x_k, 10)$,  every geodesic ball's volume ratio is bounded from two sides.
 Apply Proposition~\ref{prn:A18_1}, we obtain
 \begin{align}
   (2d)^{-m}\int_{B(w_k, 2d)} \psi_{d,k} \leq C_4 \left( \inf_{B(w_k, d)} \psi_{d,k} + d^2 \right).
   \label{eqn:L31_6}
 \end{align}

 \begin{figure}[h]
 \begin{center}
    \psfrag{xk}[c][c]{$x_k$}
    \psfrag{xif}[c][c]{$x_{\infty}$}
    \psfrag{wk}[c][c]{$w_k$}
    \psfrag{wif}[c][c]{$w$}
    \psfrag{GH}[c][c]{G.H.convergence}
    \psfrag{Bk}[l][l]{$B(x_k,1)$}
    \psfrag{Bif}[l][l]{$B(x_{\infty},1)$}
    \psfrag{SBk}[c][c]{$B(w_k,2d)$}
    \psfrag{SBif}[l][l]{$B(w,2d)$}
    \includegraphics[width=0.8\columnwidth]{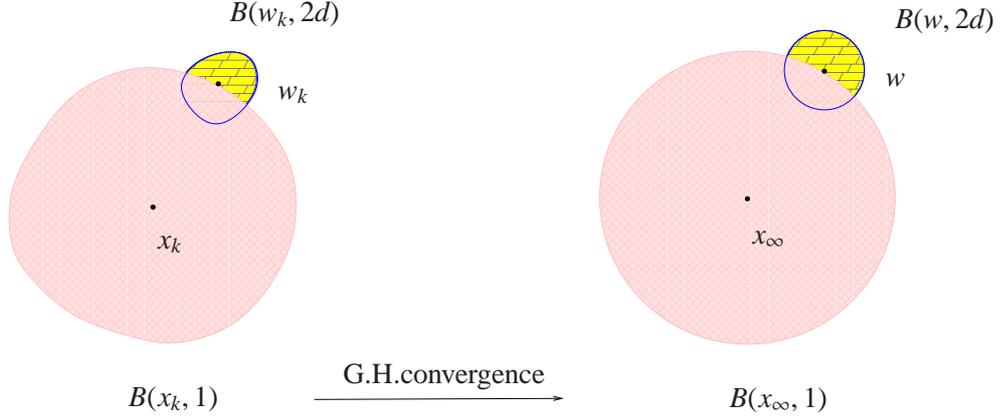}
  \caption{Boundary estimates}
  \label{fig:Boundary}
 \end{center}
\end{figure}

 By the volume continuity, it is not hard (Figure~\ref{fig:Boundary}) to see that the volume of $B(w_k, 2d) \backslash  B(x_k, 1)$
 is strictly greater than a fixed portion of the volume of
 $B(w_k, 2d)$, which is almost $\omega_m (2d)^m$.   For brevity, let's say $\left|B(w_k, 2d) \backslash  B(x_k, 1) \right|> 10^{-m} \cdot \omega_m (2d)^m$.
 Put this into (\ref{eqn:L31_6}) and note that $\displaystyle \inf_{B(w_k,d)} \psi_{d,k}=M_{2d,k}-M_{d,k}$, we have
 $\displaystyle 10^{-m} \omega_m M_{2d,k} < C_4 \left(M_{2d,k}-M_{d,k} +d^2 \right)$, which implies
 \begin{align}
   M_{d,k} <  \left( 1- 10^{-m}C_4^{-1} \omega_4 \right) M_{2d,k} + d^2 \triangleq \gamma M_{2d,k} +d^2.
   \label{eqn:L31_8}
 \end{align}
 By choosing $C_4$ large, we can assume $\gamma \in (0, 1)$.
 Let $d=2^{-i}$, $i>1$.  Induction of (\ref{eqn:L31_8}) yields
\begin{align*}
  M_{2^{-i},k} < \gamma M_{2^{-i+1},k} + 4^{-i} < \gamma^{i-1}M_{\frac{1}{2},k} + \sum_{j=0}^{i-2} \gamma^j 4^{-i+j}
  =\gamma^{i-1}M_{\frac{1}{2},k}+ \frac{\gamma^{i-1}-4^{-i+1}}{4(4\gamma-1)}.
\end{align*}
Recall that $M_{\frac{1}{2},k} \leq \norm{\varphi_k}{B(x_k,1)} \leq C_1$. Let $k \to \infty$, we obtain
 \begin{align}
   \norm{\varphi_{\infty}}{L^{\infty}(B(w,2^{-i}))} \leq  \lim_{k \to \infty} M_{2^{-i+1},k} \leq C_1\gamma^{i-1}+\frac{\gamma^{i-1}-4^{-i+1}}{4(4\gamma-1)}.
   \label{eqn:L29_3}
 \end{align}
 Since $\gamma \in (0, 1)$,  it is clear that (\ref{eqn:L29_3}) implies $\displaystyle \lim_{r \to 0} \norm{\varphi_{\infty}}{L^{\infty}(B(w,r))}=0$.
 So we finish the proof of Claim~\ref{clm:L19_1}. \\

 \begin{claim}
   In $B(x_{\infty}, 1)$, $\varphi_{\infty}$ satisfies the following equation
   \begin{align}
     -2\Delta \varphi_{\infty} -2 \varphi_{\infty} \log \varphi_{\infty} - \left(m+m\log \sqrt{2\pi}+\lambda_{\infty} \right) \varphi_{\infty}=0.
     \label{eqn:clmL19_2}
   \end{align}
   Consequently, $\varphi_{\infty} \in C^{\infty}(B(x_{\infty}, 1))$.
   \label{clm:L19_2}
 \end{claim}

  Note that $B(x_{\infty}, 1)$ is a unit ball in the standard $\R^m$.  In particular, it has smooth boundary.
 So equation (\ref{eqn:clmL19_2}) is equivalent to the following integration equation
 \begin{align}
   \varphi_{\infty}(z) = \int_{B(x_{\infty}, 1)} G(z, y)  \left( \frac{m+m\log \sqrt{2\pi}+\lambda_{\infty}}{2} + \log \varphi_{\infty}(y) \right) \varphi_{\infty}(y)dy,
   \label{eqn:L19_int}
 \end{align}
for every $z \in B(x_{\infty}, 1)$. Here $G$ is the Green function of the unit ball $B(x_{\infty}, 1) \subset \R^m$.
Because $B(x_{\infty}, 1)$ is simple, we can write down $G(z, y)$ explicitly,
 \begin{align*}
   G(z,y)=\frac{1}{(m-2)m\omega_m} \left( d^{2-m}(z,y) - d^{2-m}(x_{\infty}, z)d^{2-m}(z^*, y)\right),
 \end{align*}
 whenever $z \neq y$.  Here $z^*$ is the symmetric point of $z$ with respect to $\partial B(x_{\infty}, 1)$.
 If $z \neq x_{\infty}$, $z^*$ is the point such that $x_{\infty}, z, z^*$ on the same straight line
 and $\left|\overline{x_{\infty} z} \right| \cdot \left|\overline{x_{\infty} z^*} \right|=1$.
 If $z=x_{\infty}$, we assume $z^{*}$ as the infinity point. In the later case, we have
 $$G(x_{\infty}, y)=\frac{1}{(m-2)m\omega_m} \left(d^{2-m}(x_{\infty}, y)-1\right).$$

 By continuity, for proving (\ref{eqn:L19_int}) in $B(z_{\infty}, 1)$, it suffices to show (\ref{eqn:L19_int}) for every
 $z \in B(x_{\infty}, 1) \backslash \left\{ x_{\infty} \right\}$.  Without loss of generality, we fix an arbitrary point
 $z \in B(x_{\infty}, 1) \backslash \left\{ x_{\infty} \right\}$.
 Suppose $z_k \in B(x_k, 1)$ and $z_k \to z$, $z_k^{*} \in X_k$ and $z_k^{*} \to z^*$ (See Figure~\ref{fig:Greenfunction}).

 \begin{figure}[h]
 \begin{center}
    \psfrag{xk}[c][c]{$x_k$}
    \psfrag{yk}[c][c]{$y_k$}
    \psfrag{zk}[c][c]{$z_k$}
    \psfrag{xif}[c][c]{$x_{\infty}$}
    \psfrag{yif}[c][c]{$y$}
    \psfrag{zif}[c][c]{$z$}
    \psfrag{zk*}[c][c]{$z_k^*$}
    \psfrag{zif*}[c][c]{$z^*$}
    \psfrag{GH}[c][c]{G.H.convergence}
    \psfrag{Bk}[l][l]{$B(x_k,1)$}
    \psfrag{Bif}[l][l]{$B(x_{\infty},1)$}
    \includegraphics[width=0.8\columnwidth]{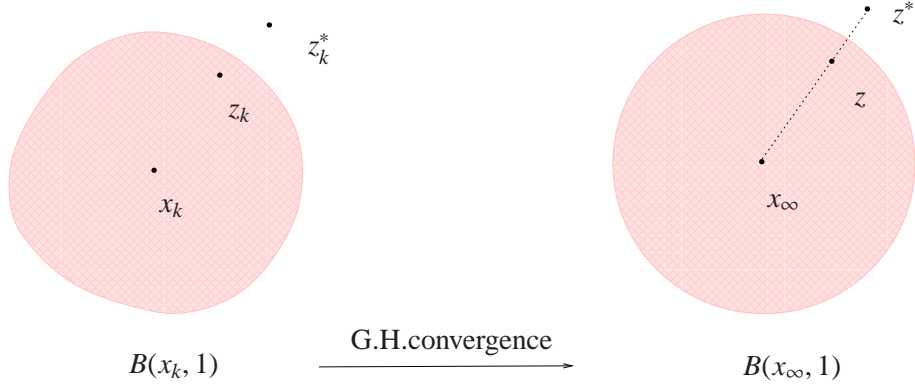}
  \caption{Approximation of Green functions}
  \label{fig:Greenfunction}
 \end{center}
\end{figure}

 Let $d$ be the distance function to the point $z_k$ under the metric $g_k(0)$. Note that
 \begin{align}
   \Delta d^{2-m}=(2-m)d^{-m} \left( 1-m+ d\Delta d \right).  \label{eqn:L20_1}
 \end{align}
 If the underlying space is Euclidean, then the right hand side is equal to $0$ whenever $d>0$.  Now on $X_k$, we are focusing our attention around the point $x_k$,
 where $Ric \geq -(m-1) \delta_k^4$. Clearly, Laplacian comparison theorem (c.f. Corollary 1.131 of~\cite{CLN}) implies that
 \begin{align}
   \Delta d^{2-m} +(m-2)(m-1) d^{1-m} \delta_k^2 \geq 0 \label{eqn:L20_2}
 \end{align}
 on $B(x_k, 10)$. It follows that
 \begin{align*}
   &0 \leq \int_{B(x_k, 1) \backslash B(z_k, r)} \left\{ \Delta d^{2-m} + (m-2)(m-1)d^{1-m}\delta_k^2 \right\}\\
   &\leq \int_{B(z_k, 2) \backslash B(z_k, r)} \left\{ \Delta d^{2-m} + (m-2)(m-1)d^{1-m}\delta_k^2 \right\}\\
   &=(m-2) \left\{ |\partial B(z_k, 2)|2^{1-m}-|\partial B(z_k, r)|r^{1-m} \right\}
   + (m-2) (m-1)\delta_k^2 \int_{r}^{2}\left( \rho^{1-m} |\partial B(z_k, \rho)| \right)  d\rho \\
       & < (m-2) \left\{ \left|  |\partial B(z_k, 2)|2^{1-m} -|\partial B(z_k, r)| r^{1-m}\right| + 4(m-1)m\omega_m\delta_k^2\right\}.
 \end{align*}
 Consequently, we have
 \begin{align*}
    &\quad \int_{B(x_k, 1) \backslash B(z_k, r)} \left|\Delta d^{2-m} \right| \\
    &\leq   \int_{B(x_k,1) \backslash B(z_k, r)} \left|\Delta d^{2-m} + (m-2) (m-1)d^{1-m}\delta_k^2 \right|
     + \int_{B(x_k, 1) \backslash B(z_k, r)} (m-2) (m-1) d^{1-m}\delta_k^2\\
     &< (m-2) \left\{ \left|  |\partial B(z_k, 2)|2^{1-m} - |\partial B(z_k, r)| r^{1-m}\right| + 8(m-1)m\omega_m \delta_k^2 \right\}.
 \end{align*}
 Fix $k$, let $r \to 0$, we have
 \begin{align*}
   \int_{B(x_k, 1) \backslash \left\{ z_k \right\}} \left| \Delta d^{2-m}\right|
   \leq (m-2) \left\{ \left|  |\partial B(z_k, 2)|2^{1-m} - m\omega_m \right| + 8(m-1)m\omega_m \delta_k^2\right\}.
 \end{align*}
 Therefore, we obtain
 \begin{align}
   \int_{B(x_k, 1) \backslash \left\{ z_k \right\}} \left| \varphi_k \Delta d^{2-m} \right|
    \leq C_1 (m-2) \left\{ \left|  |\partial B(z_k, 2)|2^{1-m} - m \omega_m \right| + 8(m-1)m\omega_m \delta_k^2\right\} \to 0,
    \label{eqn:L19_11}
 \end{align}
 as $k \to \infty$, since the limit space $X_{\infty}$ is Euclidean, where every geodesic sphere has the same volume ratio: $m\omega_m$.
 Consequently, we can calculate
 \begin{align}
   \int_{B(x_k,1)}  d^{2-m}(z_k,y) \Delta \varphi_k(y) dy&= \int_{B(x_k,1) \backslash \left\{ z_k \right\}}  d^{2-m}(z_k,y) \Delta \varphi_k(y) dy\nonumber\\
   &=\lim_{r \to 0} \int_{B(x_k, 1) \backslash B(z_k, r)} d^{2-m}(z_k,y) \Delta \varphi_k(y) dy\nonumber\\
    &=(m-2)m\omega_m \varphi_k(z_k) + \int_{B(x_k, 1) \backslash \left\{ z_k \right\}} \varphi_k(y) \Delta d^{2-m}(z_k, y) dy.
   \label{eqn:L19_7}
 \end{align}
 Of course, the default measure in the calculation is $dy=d\mu_{g_k(0)}$. Combining (\ref{eqn:L19_11}) and (\ref{eqn:L19_7}), we have
 \begin{align}
   \lim_{k \to \infty}  \int_{B(x_k,1)}  d^{2-m}(z_k,y)\Delta \varphi_k(y) dy=  (m-2)m\omega_m \varphi_{\infty}(z).
   \label{eqn:L19_8}
 \end{align}
 Note that $d(z_k^*, \cdot)>0$ uniformly on $B(x_k, 1)$. By similar but simpler arguments, we obtain
 \begin{align}
   \lim_{k \to \infty}  \int_{B(x_k,1)}  d^{2-m}(z_k^*,y) \Delta \varphi_k(y) dy= 0.
   \label{eqn:L19_9}
 \end{align}
 In view of (\ref{eqn:L19_8}) and (\ref{eqn:L19_9}), we have
 \begin{align*}
   &\quad (m-2)m\omega_m \varphi_{\infty}(z)\\
   &=\lim_{k \to \infty} \int_{B(x_k, 1)} \left\{ d^{2-m}(z_k, y) -d^{2-m}(x_k, z_k)d^{2-m}(z_k^*, y)\right\} \Delta \varphi_k(y) dy\\
   &=\lim_{k \to \infty} \int_{B(x_k, 1)}  \left\{ d^{2-m}(z_k, y) -d^{2-m}(x_k, z_k)d^{2-m}(z_k^*, y) \right\}
    \left(  \frac{m+m\log \sqrt{2\pi}+\lambda_k}{2} + \log \varphi_k(y)\right) \varphi_k(y) dy\\
   &=\int_{B(x_{\infty}, 1)} \left\{ d^{2-m}(z,y) -d^{2-m}(x_{\infty}, z)d^{2-m}(z^*,y) \right\} \cdot
   \left( \frac{m+m\log \sqrt{2\pi}+\lambda_{\infty}}{2} + \log \varphi_{\infty}(y) \right) \varphi_{\infty}(y) dy\\
   &= (m-2)m\omega_m \int_{B(x_{\infty}, 1)} G(z, y)  \cdot \left( \frac{m+m\log \sqrt{2\pi}+\lambda_{\infty}}{2} + \log \varphi_{\infty}(y) \right) \varphi_{\infty}(y) dy.
 \end{align*}
 In the third step, we used the Euler-Lagrangian equation for $\varphi_k$.  In the fourth step, we used the integrability of $d^{2-m}$ and uniform bound of $\varphi_k$.
 Therefore, we prove (\ref{eqn:L19_int}) for $z$.  By the arbitrariness of $z \in B(x_{\infty}, 1) \backslash \left\{ x_{\infty} \right\}$ and continuity, equation (\ref{eqn:L19_int}),
 henceforth (\ref{eqn:clmL19_2}) follows directly. Then the standard bootstrapping argument for elliptic PDEs implies that $\varphi_{\infty} \in C^{\infty}\left( B(x_{\infty}, 1) \right)$. This finishes the proof of Claim~\ref{clm:L19_2}.\\

 Now we are ready to prove the theorem by a contradiction argument. In fact, since $\partial B(x_{\infty}, 1)$ is smooth
 and $\varphi|_{\partial B(x_{\infty}, 1)} \equiv 0$ (Claim~\ref{clm:L19_1}), by trivial extension, we can regard $\varphi_{\infty} \in W_0^{1,2}(\R^m)$ (c.f. Section 5.5 of~\cite{Evans}). It follows from the Logarithm Sobolev inequality of Euclidean space (c.f.~\cite{Gross}) that
 \begin{align}
   \int_{\R^m} \left( \frac{1}{2} |\nabla \varphi_{\infty}|^2 - 2\varphi_{\infty}^2 \log \varphi_{\infty}
   -m \left(1+\log \sqrt{2\pi} \right)\varphi_{\infty}^2 \right)  \geq 0.
   \label{eqn:L19_6}
 \end{align}
 On the other hand, by (\ref{eqn:clmL19_2}) in Claim~\ref{clm:L19_2} and the fact $\varphi_{\infty} \equiv 0$ outside $B(x_{\infty}, 1)$, we deduce that
 \begin{align*}
   \int_{\R^m} \left( \frac{1}{2} |\nabla \varphi_{\infty}|^2 - 2\varphi_{\infty}^2 \log \varphi_{\infty}
   -m \left(1+\log \sqrt{2\pi} \right)\varphi_{\infty}^2 \right)=\lambda_{\infty} \leq -\eta<0,
 \end{align*}
 which contradicts to (\ref{eqn:L19_6})!
\end{proof}

\begin{remark}
  If the ``almost-Euclidean volume ratio" (inequality (\ref{eqn:pseudocondition_2}))
  and ``almost nonnegative Ricci" (inequality (\ref{eqn:pseudocondition_1})) hold globally,
  then the rough curvature estimate (inequality (\ref{eqn:pseudo_ricci}))
  follows from the combination of Perelman's pseudo-locality theorem and Levy-Gromov inequality(c.f.~\cite{Gromov}), whose proof requires some regularity results
  in geometric measure theory on closed manifolds.
  There should exist another proof of Proposition~\ref{prn:pseudo-locality_L30} from some local version of the Gromov-Ivey inequality.
  However, it seems that some local regularity results in geometric measure theory are required.
\end{remark}

\begin{remark}
  Except inequality (\ref{eqn:pseudocondition_1}),
  the ``almost nonnegative Ricci" condition can also be interpreted as the $L^p$-integration of negative Ricci part
  is sufficiently small(c.f.~\cite{PW1},~\cite{PW2}), for some $p>\frac{m}{2}$. Using this interpretation, one can obtain another pseudo-locality theorem.
\end{remark}

Combine Proposition~\ref{prn:pseudo-locality_L30} with the fundamental work of~\cite{CC1}, we obtain the following property.

\begin{proposition}
There exists a constant $\delta_0=\delta_0(m)$ with the following properties.

Suppose $\left\{ (X, g(t)), 0 \leq t \leq 1 \right\}$ is a Ricci flow solution, $x_0 \in X, \Omega=B_{g(0)}(x_0, 1)$. Suppose that
  \begin{align}
    Ric(x, 0) \geq -(m-1)\delta_0, \quad \forall \; x \in \Omega; \quad |\Omega|_{d\mu_{g(0)}} \geq (1-\delta_0) \omega_m.
   \label{eqn:A21_5}
  \end{align}
  Then we have
  \begin{align*}
    \left|B_{g(s)} \left(x, \sqrt{s} \right) \right|_{d\mu_{g(s)}} \geq \kappa' s^{\frac{m}{2}}, \quad
    \left|\widetilde{Rm} \right|(x, s) \leq \frac{1}{100}s^{-1}, \quad \forall \; x\in \Omega'=B_{g(0)} \left(x_0, \frac{3}{4} \right), \; s \in (0, 2\delta_0],
  \end{align*}
  where $\kappa'=\kappa'(m)$ is a universal constant.
  \label{prn:pseudo-locality_A21}
\end{proposition}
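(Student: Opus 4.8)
The plan is to deduce Proposition~\ref{prn:pseudo-locality_A21} from Proposition~\ref{prn:pseudo-locality_L30} by a parabolic rescaling, the only genuine input being the work of~\cite{CC1} (in the form of Colding's volume continuity~\cite{Colding_volume}), which I use to upgrade the \emph{global} near-Euclidean volume hypothesis (\ref{eqn:A21_5}) to a \emph{local} statement: every sufficiently small geodesic ball centered in $\Omega'$ has volume ratio close to $\omega_m$. Concretely, I would first fix $\alpha=\alpha(m)$ with $0<\alpha<\frac{1}{100m}$, say $\alpha=\frac{1}{200m}$; Proposition~\ref{prn:pseudo-locality_L30} then supplies constants $\bar\delta=\delta(m,\alpha)<1$ and $\bar\epsilon=\epsilon(m,\alpha)$ depending only on $m$, and I set $\mu=8\bar\delta^{-1}>1$.

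\textbf{Step 1 (local near-Euclidean volume).} I claim: for every $\epsilon>0$ there is $\eta=\eta(m,\epsilon)>0$ such that if $Ric\ge-(m-1)\eta$ on $B_{g(0)}(x_0,1)$ and $|B_{g(0)}(x_0,1)|\ge(1-\eta)\omega_m$, then $|B_{g(0)}(y,\frac{1}{8})|\ge(1-\epsilon)\,\omega_m 8^{-m}$ for all $y\in B_{g(0)}(x_0,\frac{7}{8})$. This follows by contradiction: a violating sequence $(X_k,x_{0,k},g_k,y_k)$ with $\eta_k\to0$ subconverges, by Gromov compactness and the uniform non-collapsing that Bishop--Gromov forces here, in the pointed Gromov--Hausdorff sense to a limit $(X_\infty,x_{0,\infty})$; by Colding's volume continuity (Theorem~0.8 of~\cite{Colding_volume}, used exactly as in the proof of Proposition~\ref{prn:pseudo-locality_L30}) together with the Bishop inequality one gets $|B(x_{0,\infty},1)|=\omega_m$, so $B(x_{0,\infty},1)$ is isometric to the Euclidean unit ball; but then $y_k\to y_\infty\in\overline{B(x_{0,\infty},\frac{7}{8})}$ and volume continuity again give $\omega_m 8^{-m}=|B(y_\infty,\frac{1}{8})|=\lim_k|B_{g_k}(y_k,\frac{1}{8})|\le(1-\epsilon)\omega_m 8^{-m}$, a contradiction.

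\textbf{Step 2 (rescale, apply, rescale back).} Fix $y\in\Omega'=B_{g(0)}(x_0,\frac{3}{4})$ and put $g'(t')\triangleq\mu^2 g(t'/\mu^2)$, a Ricci flow on $[0,\mu^2]\supset[0,1]$ with $g'(0)=\mu^2 g(0)$. Since $\bar\delta^{-1}\mu^{-1}=\frac{1}{8}$, one has $B_{g'(0)}(y,\bar\delta^{-1})=B_{g(0)}(y,\frac{1}{8})\subset B_{g(0)}(x_0,\frac{7}{8})\subset\Omega$. Choosing $\delta_0=\delta_0(m)$ small enough, scale-invariance of the Ricci tensor and (\ref{eqn:A21_5}) give $Ric(g'(0))=Ric(g(0))\ge-(m-1)\delta_0 g(0)=-(m-1)\delta_0\mu^{-2}g'(0)\ge-(m-1)\bar\delta^4 g'(0)$ on that ball, which is (\ref{eqn:pseudocondition_1}) for $g'$ at $y$; and Step~1 with $\epsilon=\bar\delta$, $\eta=\delta_0$ gives $\bar\delta^m|B_{g'(0)}(y,\bar\delta^{-1})|_{g'}=(\bar\delta\mu)^m|B_{g(0)}(y,\frac{1}{8})|_g\ge 8^m\cdot(1-\bar\delta)\omega_m 8^{-m}=(1-\bar\delta)\omega_m$, which is (\ref{eqn:pseudocondition_2}). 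Hence Proposition~\ref{prn:pseudo-locality_L30} applies to $g'$ at $y$: $|Rm|_{g'}(\cdot,t')\le\alpha(t')^{-1}+\bar\epsilon^{-2}$ on $B_{g'(0)}(y,\bar\epsilon)$ and $|B_{g'(t')}(\cdot,\sqrt{t'})|_{g'}\ge\kappa'(m)(t')^{m/2}$, for $t'\in(0,\bar\epsilon^2]$. Undoing the rescaling ($s=t'/\mu^2$, so $|Rm|_g(\cdot,s)=\mu^2|Rm|_{g'}(\cdot,\mu^2 s)$, distances scale by $\mu$, volumes by $\mu^m$) and requiring $\delta_0\le\frac{1}{2}\bar\epsilon^2\mu^{-2}$, these hold for $s\in(0,2\delta_0]$: the non-collapsing reads $|B_{g(s)}(y,\sqrt s)|_g\ge\kappa'(m)s^{m/2}$ with the same $\kappa'$, while $|Rm|_g(y,s)\le\alpha s^{-1}+\mu^2\bar\epsilon^{-2}$. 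Setting $C_0=\mu^2\bar\epsilon^{-2}$ (a constant depending only on $m$) and using that $s\le2\delta_0$ forces $C_0\le2\delta_0 C_0\, s^{-1}$, a last requirement $2\delta_0 C_0\le\frac{1}{200}$ yields $|Rm|_g(y,s)\le(\alpha+\frac{1}{200})s^{-1}\le\frac{1}{100}s^{-1}$ since $\alpha\le\frac{1}{200}$. As $y\in\Omega'$ was arbitrary and $\delta_0$ depends only on $m$, the proposition follows (here $\widetilde{Rm}$ is the curvature of $g$; if it denotes that of a renormalization of $g$, the extra factors are $1+O(\delta_0)$ on $[0,2\delta_0]$ and are absorbed by shrinking $\delta_0$ once more).

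\textbf{Main obstacle.} Everything after Step~1 is rescaling bookkeeping, so the heart of the matter is Step~1: turning a \emph{single} near-maximal-volume unit ball into near-maximal volume for \emph{all} $\frac{1}{8}$-balls centered in $\Omega'$. A direct Bishop--Gromov comparison only produces a lower bound off by a dimensional factor ($\sim 8^m$), so one genuinely needs Cheeger--Colding almost-rigidity, packaged here as Colding's volume continuity together with Euclidean volume rigidity --- precisely the ``fundamental work of~\cite{CC1}'' invoked in the statement.
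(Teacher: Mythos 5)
Your proof is correct and follows essentially the same route as the paper's: the key step in both is the contradiction/volume-continuity argument (your Step~1, the paper's Claim) that upgrades the single unit-ball volume hypothesis to a uniform lower volume ratio bound on $\tfrac18$-balls centered in $\Omega'$, after which one applies Proposition~\ref{prn:pseudo-locality_L30} at each such center by parabolic rescaling. The only cosmetic difference is in the rescaling bookkeeping: you rescale the $\tfrac18$-ball directly to radius $\bar\delta^{-1}$ via $\mu=8\bar\delta^{-1}$, while the paper first shrinks to a ball of radius $\xi\delta^{-1}$ by an extra Bishop--Gromov comparison and then rescales by $\xi^{-1}$ with $\xi=\delta^4$; both are routine and yield a $\delta_0$ depending only on $m$.
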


\begin{proof}
  Let's first prove the following Claim.
   \begin{claim}
   For every small $\xi>0$, there exists a number $\eta=\eta(m, \xi)$ with the following property.

    Suppose $Ric(x,0) \geq -(m-1)\eta$ in $\Omega=B_{g(0)}(x_0, 1)$, and $|\Omega|_{d\mu_{g(0)}} \geq (1-\eta) \omega_m$, then
    \begin{align}
      8^m \left|B_{g(0)}\left( y, \frac{1}{8} \right) \right|_{d\mu_{g(0)}} \geq (1-\xi) \omega_m, \quad \forall \; y \in B_{g(0)}\left(x_0, \frac{3}{4} \right).
    \label{eqn:A21_7}
    \end{align}
    \label{clm:A21_1}
  \end{claim}

  Actually, if this statement was wrong, we can find a sequence of $\eta_i \to 0$ and manifolds $(X_i, x_i, g_i(0))$ such that (\ref{eqn:A21_5}) holds for
  $\eta_i$ and the ball $\Omega_i=B_{g_i(0)}(x_i, 1)$. However, for some point $y_i \in B_{g_i(0)}\left( x_i, \frac{3}{4} \right)$, we have
  \begin{align}
   8^m \left|B_{g_i(0)}\left( y_i, \frac{1}{8} \right) \right|_{d\mu_{g_i(0)}} < \left( 1-\xi \right) \omega_m.
   \label{eqn:A21_6}
  \end{align}
  Suppose $\left( \Omega_i, x_i, g_i(0) \right)$ converges to $\left( \bar{\Omega}, \bar{x}, \bar{g} \right)$.  Clearly, we see that $\bar{\Omega}$ is isometric to
  the unit ball in the Euclidean space  $\R^m$. Since $y_i \in B_{g_i(0)}\left(x_i, \frac{3}{4} \right)$,
  we can assume $y_i \to \bar{y} \in B\left(\bar{x}, \frac{3}{4} +\frac{1}{100} \right) \subset \bar{\Omega}$. The lower bound of Ricci guarantees the continuity of volume. Therefore we have
  \begin{align*}
    \lim_{i \to \infty} 8^m \left|B_{g_i(0)}\left( y_i, \frac{1}{8} \right) \right|_{d\mu_{g_i(0)}}
    =8^m \left|B_{\bar{g}}\left( \bar{y}, \frac{1}{8} \right) \right|_{d\mu_{\bar{g}}}=\omega_m,
  \end{align*}
  which contradicts to (\ref{eqn:A21_6})! This contradiction establishes the proof of Claim~\ref{clm:A21_1}.   \\

  Let $\xi=\delta^4 \left(m, \frac{1}{1000m} \right)$, where $\delta$ is defined by Proposition~\ref{prn:pseudo-locality_L30}.
  Let $\eta=\eta(m,\xi)$ according to Claim~\ref{clm:A21_1}.

  Suppose the conditions of Claim~\ref{clm:A21_1} is satisfied for $\eta=\eta(m,\xi)$.
  Define $\hat{g}(t)=\xi^{-2} g(\xi^2 t)$. Fix an arbitrary point $y \in B_{g(0)}\left(x_0, \frac{3}{4} \right)$.
  By volume comparison, inequality (\ref{eqn:A21_7}) and the choice of $\xi$ yield that
  $(X, y, \hat{g}(0))$ satisfies the initial conditions of Proposition~\ref{prn:pseudo-locality_L30}.
  In particular, we have
$\displaystyle \left|B_{\hat{g}(t)}\left(y, \sqrt{t}\right) \right|_{d\mu_{\hat{g}(t)}} \geq \kappa' t^{\frac{m}{2}}, \quad
    |Rm|_{\hat{g}(t)}(y) \leq \frac{1}{1000m t} + \epsilon^{-2}$ for every $t \in \left(0, \epsilon^2 \right]$.
This implies that for every $t \in \left(0, \frac{\epsilon^2}{200} \right]$, we have
$$\displaystyle \left|B_{\hat{g}(t)}\left(y, \sqrt{t}\right) \right|_{d\mu_{\hat{g}(t)}} \geq \kappa' t^{\frac{m}{2}}, \quad
     |Rm|_{\hat{g}(t)}(y) \leq \frac{1}{100t}.$$
By a trivial rescaling argument, we conclude
  \begin{align*}
    \left|B_{g(t)}\left(y, \sqrt{t}\right) \right|_{d\mu_{g(t)}} \geq \kappa' t^{\frac{m}{2}}, \quad
    |Rm|_{g(t)}(y) \leq \frac{1}{100 t}, \quad \forall \; t \in \left( 0, \frac{\xi^2 \epsilon^2}{200} \right].
  \end{align*}
  Define $\delta_0 \triangleq \min \left\{ \frac{\xi^2 \epsilon^2}{1000}, \eta(m,\xi) \right\}$. Clearly, Proposition~\ref{prn:pseudo-locality_A21} holds
  for this choice of $\delta_0$.
\end{proof}

Now we are ready to prove  the pseudo-locality theorem under the normalized Ricci flow.

\begin{theorem}[Pseudo-locality theorem]
  There exists a constant $\delta_0=\delta_0(m)$ with the following properties.

  Suppose $\left\{ (X, g(t)), 0 \leq t \leq 1 \right\}$ is a normalized Ricci flow solution: $\D{}{t} g= -Ric + \lambda_0 g$, $\lambda_0$ is a constant
  with $|\lambda_0| \leq 1$. Let $x_0 \in X, \Omega=B_{g(0)}(x_0, 1)$. Suppose that
  \begin{align}
    Ric(x, 0) \geq -(m-1)\delta_0, \quad \forall \; x \in \Omega; \quad |\Omega|_{d\mu_{g(0)}} \geq (1-\delta_0) \omega_m.
   \label{eqn:pseudocon}
  \end{align}
  Then we have
  \begin{align}
    &\left| B_{g(t)} \left(x, \sqrt{t} \right) \right|_{d\mu_{g(t)}} \geq \kappa_0 t^{\frac{m}{2}}, \label{eqn:pseduo_noncollapsed}\\
    &|Rm|(x, t) \leq t^{-1}, \quad \forall \; x\in \Omega'=B_{g(0)} \left(x_0, \frac{3}{4} \right), \; t \in (0, 2\delta_0],
    \label{eqn:pseudo}
  \end{align}
  where $\kappa_0=\kappa_0(m)$ is a universal constant.
  \label{thm:pseudo-locality_A22}
\end{theorem}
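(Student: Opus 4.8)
The plan is to reduce Theorem~\ref{thm:pseudo-locality_A22} to its unnormalized counterpart, Proposition~\ref{prn:pseudo-locality_A21}, via the time change (\ref{eqn:normalization}). Given a normalized solution $\{(X,g(t)),\,0\le t\le1\}$ of $\partial_t g=-Ric+\lambda_0 g$ with $|\lambda_0|\le1$, and $x_0$ satisfying (\ref{eqn:pseudocon}), I would set $\tilde g(s)$ as in (\ref{eqn:normalization}); then $\partial_s\tilde g=-2Ric(\tilde g)$ and $\tilde g(0)=g(0)$, so the hypotheses (\ref{eqn:pseudocon}) are literally the hypotheses (\ref{eqn:A21_5}) of Proposition~\ref{prn:pseudo-locality_A21} for the flow $\tilde g$, provided the threshold $\delta_0$ we are constructing satisfies $\delta_0\le\delta_0'(m)$, the constant of that proposition. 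The only preliminary point to check is that $\tilde g$ is defined on a long enough time interval: using $|\lambda_0|\le1$, the $s$-interval corresponding to $t\in[0,1]$ has length $S\ge\tfrac14$ (e.g. $S=\frac{1-e^{-\lambda_0}}{2\lambda_0}\ge\frac{1-e^{-1}}{2}$ when $\lambda_0\in(0,1]$, $S=\tfrac12$ when $\lambda_0=0$, and $S=\frac{e^{|\lambda_0|}-1}{2|\lambda_0|}\ge\tfrac12$ when $\lambda_0<0$), and since $\delta_0'(m)$ is tiny, $2\delta_0'<S$; thus the conclusion of Proposition~\ref{prn:pseudo-locality_A21}, which concerns $\tilde g$ only for $s\in(0,2\delta_0']$, is available. (Alternatively, one first parabolically rescales $\tilde g$ by a bounded factor so that it is literally defined on $[0,1]$, the effect on the Ricci and volume-ratio hypotheses being controlled by Bishop--Gromov.)

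Applying Proposition~\ref{prn:pseudo-locality_A21} to $\tilde g$ gives, for $x\in B_{g(0)}(x_0,\tfrac34)=\Omega'$ and $s\in(0,2\delta_0']$,
\begin{align*}
 \left|B_{\tilde g(s)}(x,\sqrt s)\right|_{d\mu_{\tilde g(s)}}\ge\kappa'(m)\,s^{\frac m2},\qquad |Rm|_{\tilde g(s)}(x)\le\tfrac1{100}\,s^{-1},
\end{align*}
and it remains to transfer these bounds to $g(t)$. Writing $s=s(t)=\frac{1-e^{-\lambda_0 t}}{2\lambda_0}$ (interpreted as $s=t/2$ if $\lambda_0=0$), we have $\tilde g(s)=e^{-\lambda_0 t}g(t)$, and since $|\lambda_0 t|\le1$ on $(0,1]$ the elementary inequalities $0.3\,t\le s(t)\le0.9\,t$, $\ e^{-1}\le e^{-\lambda_0 t}\le e$, and $\ e^{\lambda_0 t}s(t)=\frac{e^{\lambda_0 t}-1}{2\lambda_0}<t$ hold. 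The conformal-change rules $|Rm|_{g(t)}=e^{-\lambda_0 t}|Rm|_{\tilde g(s)}$, $\ d_{\tilde g(s)}=e^{-\lambda_0 t/2}d_{g(t)}$, $\ d\mu_{\tilde g(s)}=e^{-m\lambda_0 t/2}d\mu_{g(t)}$ then give $|Rm|_{g(t)}(x)\le e\cdot\tfrac1{100}(0.3\,t)^{-1}<t^{-1}$, which is (\ref{eqn:pseudo}); and, because $e^{\lambda_0 t/2}\sqrt{s(t)}<\sqrt t$ so that $B_{\tilde g(s)}(x,\sqrt s)=B_{g(t)}(x,e^{\lambda_0 t/2}\sqrt s)\subset B_{g(t)}(x,\sqrt t)$,
\begin{align*}
 \left|B_{g(t)}(x,\sqrt t)\right|_{d\mu_{g(t)}}\ge e^{\frac{m\lambda_0 t}{2}}\left|B_{\tilde g(s)}(x,\sqrt s)\right|_{d\mu_{\tilde g(s)}}\ge e^{-\frac m2}\kappa'(m)(0.3\,t)^{\frac m2},
\end{align*}
which is (\ref{eqn:pseduo_noncollapsed}) with $\kappa_0(m):=\kappa'(m)(0.3/e)^{m/2}$. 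Choosing $\delta_0=\delta_0(m)\le\delta_0'(m)$ ensures $s(t)\in(0,2\delta_0']$ for all $t\in(0,2\delta_0]$, so the conclusions hold on the asserted ranges.

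I expect the argument to be essentially bookkeeping: the crude targets $t^{-1}$ and $\kappa_0 t^{m/2}$ (in place of $\tfrac1{100}s^{-1}$ and $\kappa'(m)s^{m/2}$) are precisely the slack needed to absorb the bounded conformal factor $e^{\pm\lambda_0 t}$ and the time distortion $s(t)\asymp t$. The only mildly delicate points are: (i) confirming that the renormalized flow lives on a long enough time interval, which is where $|\lambda_0|\le1$ is genuinely used, forcing $S\ge\tfrac14$; and (ii) the direction of the time change in the non-collapsing statement, where one needs the inclusion $B_{\tilde g(s(t))}(x,\sqrt{s(t)})\subset B_{g(t)}(x,\sqrt t)$ (valid because $e^{\lambda_0 t}s(t)<t$) so that a volume lower bound at radius $\sqrt s$ for $\tilde g$ descends to one at radius $\sqrt t$ for $g$.
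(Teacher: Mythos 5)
Your proposal is correct and follows essentially the same route as the paper's proof: pass to the unnormalized flow $\tilde g(s)$ via the change of variables (\ref{eqn:normalization}), observe $\tilde g(0)=g(0)$ so Proposition~\ref{prn:pseudo-locality_A21} applies, then transport the volume and curvature bounds back through the constant conformal factor $e^{\lambda_0 t}$ and the time distortion $s(t)\asymp t/2$. The only (harmless) cosmetic differences are your explicit check that the $\tilde g$-flow lives long enough, which the paper leaves implicit by shrinking $\delta_0$, and your slightly different numerical constants $0.3t\le s\le 0.9t$ versus the paper's $\tfrac13 t<s<\tfrac23 t$ after shrinking $\delta_0$; both absorb into $\kappa_0$ and the target bound $t^{-1}$ in the same way.
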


\begin{proof}
    Let $\tilde{g}(s)= \left( 1-2\lambda_0 s \right) g\left( \frac{\log (1-2\lambda_0 s)}{-\lambda_0} \right)$.
    Clearly, $\tilde{g}(s)$ is a Ricci flow solution with $\tilde{g}(0)=g(0)$.
    Denote $\frac{\log (1-2\lambda_0 s)}{-\lambda_0}$ by $t(s)$. Then we have $\tilde{g}(s)=\left( 1-2\lambda_0 s \right) g(t)$.
  By Taylor expansion of $t(s)=\frac{\log (1-2\lambda_0 s)}{-\lambda_0}$, shrink $\delta_0$ if necessary, we have
  $ \frac{3}{2} s < t < 3s$ whenever $s \in (0, 10\delta_0)$.
  Note that
  $$\displaystyle  g(t) = e^{\lambda_0 t} \tilde{g} \left( \frac{1-e^{-\lambda_0 t}}{2\lambda_0} \right)=e^{\lambda_0 t}\tilde{g}(s),$$
  which implies
  \begin{align*}
    B_{g(t)} \left(x, \sqrt{t} \right) = B_{e^{\lambda_0 t} \tilde{g}(s)} \left(x, \sqrt{t} \right)=B_{\tilde{g}(s)} \left(x, \sqrt{e^{-\lambda_0 t} t} \right).
  \end{align*}
  If $t \in (0, 2\delta_0]$, then $s \in \left(0, \frac{4}{3} \delta_0 \right]$. Note that $\tilde{g}(0)=g(0)$.
  Therefore, Proposition~\ref{prn:pseudo-locality_A21} can be applied to obtain the following estimates.
  \begin{align*}
    \begin{cases}
    &\left|B_{g(t)} \left(x, \sqrt{t} \right) \right|_{d\mu_{g(t)}} = e^{\frac{m\lambda_0 t}{2}} \left| B_{\tilde{g}(s)}\left(x, \sqrt{e^{-\lambda_0 t}t} \right)\right|_{d\mu_{\tilde{g}(s)}}
      >\frac{1}{2} \left| B_{\tilde{g}(s)}\left(x, \sqrt{s} \right)\right|_{d\mu_{\tilde{g}(s)}}> \frac{1}{2} \kappa' s^{\frac{m}{2}} \triangleq \kappa_0 t^{\frac{m}{2}},\\
    &|Rm|(x, t)= e^{-\lambda_0 t} \left|\widetilde{Rm} \right|(x, s) \leq \frac{e^{-\lambda_0 t}}{100} s^{-1} < \frac{3}{100}e^{-\lambda_0 t} t^{-1} <t^{-1},
  \end{cases}
  \end{align*}
  for every point $x \in \Omega'=B_{g(0)} \left(x_0, \frac{3}{4} \right), \; t \in (0, 2\delta_0]$. So we finish the proof of Theorem~\ref{thm:pseudo-locality_A22}.
\end{proof}

\section{Curvature, distance and volume estimates}

Under the Ricci flow, evolution of distance between two points is controlled by the Ricci curvature.
By maximum principle, a scalar-flat Ricci flow solution must be Ricci flat. Therefore, the distance between
any two points does not depend on the time.  In this section, we will develop an ``almost"-version of
this observation. Fix two points in the underlying manifold of a normalized Ricci flow solution.
If the normalized scalar curvature is almost zero in the $L^1$-sense, then the distance between these
two points are almost fixed by the flow.  This new estimate is based on Proposition~\ref{prn:dgoup},
Theorem~\ref{thm:pseudo-locality_A22}, and the following estimate of normalized Ricci curvature.

\begin{lemma}
  Suppose $\left\{ (X,x_0,g(t)), -2 \leq t \leq 1 \right\}$ satisfies the following conditions.
  \begin{itemize}
    \item $g(t)$ satisfies the normalized Ricci flow solution
\begin{align*}
  \D{}{t} g_{ij} = -R_{ij} + \lambda_0 g_{ij}
\end{align*}
where $\lambda_0$ is a constant with $|\lambda_0|\leq \frac{1}{100m^2}$.
\footnote{Note that this is not $1$. In our mind, the flow in this lemma comes from the blowup of a general normalized flow, so the coefficient $\lambda_0$ could be very small.}

    \item $|Rm|(x, t) \leq \frac{1}{100m^2}$ whenever $x \in B_{g(t)}(x_0, 100), \; t \in [-2, 1]$.
    \item $inj(x_0, t) \geq 100$ uniformly for every $t \in [-2,1]$.
  \end{itemize}
  Then there exists a large constant $C=C(m)$ such that
  \begin{align}
    |Ric-\lambda_0 g|(x_0, 0)
    \leq C \left\{ \int_{-2}^{1} \int_{B_{g(0)}(x_0, 10)} |R-m\lambda_0|d\mu dt \right\}^{\frac{1}{2}}.
    \label{eqn:ricrhalf_normalized}
  \end{align}
  \label{lma:lcRm_nm}
\end{lemma}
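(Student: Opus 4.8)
The plan is to combine the two evolution equations (\ref{eqn:normricupper}) and (\ref{eqn:normscalar}) with local parabolic regularity. As in the text, write $h_{ij}=R_{ij}-\lambda_0 g_{ij}$ and $H=R-m\lambda_0$; the goal is to bound $|h|(x_0,0)$. The argument will have two steps. First I would integrate the scalar evolution equation against a space-time cutoff to obtain an $L^2$ bound
\[
  \int_{-1}^0\!\!\int_{B_{g(0)}(x_0,2)}|h|^2\,d\mu\,dt \;\le\; C(m)\int_{-2}^1\!\!\int_{B_{g(0)}(x_0,10)}|R-m\lambda_0|\,d\mu\,dt .
\]
Second, since $|h|$ is a nonnegative subsolution of a parabolic equation with bounded coefficients, a local parabolic mean value inequality on a small cylinder with top vertex $(x_0,0)$ gives $|h|^2(x_0,0)\le C(m)\int_{-1}^0\int_{B_{g(0)}(x_0,2)}|h|^2\,d\mu\,dt$. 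Chaining these two inequalities and taking a square root produces (\ref{eqn:ricrhalf_normalized}).

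For the first step, I rewrite (\ref{eqn:normscalar}) as $|h|^2=\D{}{t}H-\tfrac12\Delta H-\lambda_0 H$. Choose a smooth cutoff $\phi(x,t)$ with $\phi\equiv 1$ on $B_{g(0)}(x_0,2)\times[-1,0]$, supported in $B_{g(0)}(x_0,10)\times[-2,1]$ and vanishing near the two time endpoints; using the hypotheses (bounded curvature on $B_{g(t)}(x_0,100)$ together with $inj(x_0,\cdot)\ge 100$, which in particular makes $d_{g(t)}(x_0,\cdot)$ smooth with bounded Hessian there, and makes all metrics $g(t)$, $t\in[-2,1]$, uniformly equivalent on the relevant region) one can arrange $|\D{\phi}{t}|+|\Delta\phi|+|\nabla\phi|^2\le C(m)$. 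Multiplying the identity by $\phi^2$, integrating over space-time against $d\mu_{g(t)}\,dt$, integrating by parts twice in space and once in time (using $\D{}{t}d\mu=-\tfrac12 H\,d\mu$, with all boundary terms vanishing by the support of $\phi$) gives
\[
  \int\!\!\int \phi^2|h|^2\,d\mu\,dt=\int\!\!\int H\Bigl(-\D{}{t}(\phi^2)-\tfrac12\Delta(\phi^2)-\lambda_0\phi^2+\tfrac12\phi^2 H\Bigr)\,d\mu\,dt .
\]
Since $|Rm|\le\frac{1}{100m^2}$ forces $|H|=|R-m\lambda_0|\le C(m)$, every factor in the parentheses on the right is bounded by $C(m)$ on $\supp\phi$, so the right side is at most $C(m)\int_{-2}^1\int_{B_{g(0)}(x_0,10)}|H|\,d\mu\,dt$; this is the desired $L^2$ bound for $h$.

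For the second step, recall (\ref{eqn:normricupper}): $\D{}{t}|h|\le\tfrac12\Delta|h|+|Rm||h|\le\tfrac12\Delta|h|+\frac{1}{100m^2}|h|$, valid distributionally via the Kato inequality (or by working with $\sqrt{|h|^2+\varepsilon^2}$ and letting $\varepsilon\to 0$). Thus $|h|\ge 0$ is a subsolution of a uniformly parabolic operator with zeroth-order coefficient bounded by $\frac{1}{100m^2}$. The hypotheses supply bounded geometry on a parabolic neighborhood of $(x_0,0)$ of a fixed size — two-sided volume ratio bounds and a uniform local Sobolev inequality, with constants depending only on $m$ — so the De Giorgi–Nash–Moser local maximum principle for parabolic subsolutions applies "from below in time" and yields $\sup_{Q''}|h|^2\le C(m)\int_{Q'}|h|^2\,d\mu\,dt$, where $Q'=B_{g(0)}(x_0,2)\times[-1,0]$ and $Q''\subset Q'$ is a smaller cylinder containing $(x_0,0)$ on its top face. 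In particular $|h|^2(x_0,0)\le C(m)\int_{Q'}|h|^2\,d\mu\,dt$, and combining with Step 1 finishes the proof.

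The conceptual heart of the argument is that the reaction term $|h|^2$ in the scalar equation (\ref{eqn:normscalar}) lets one convert the $L^1$-smallness of the normalized scalar curvature into $L^2$-smallness of the normalized Ricci curvature, after which parabolic regularity upgrades $L^2$ to a pointwise bound; the square root is the price of this conversion. I expect the only genuine obstacle to be bookkeeping: choosing the cutoff so that its spatial Laplacian, gradient, and time derivative stay bounded even though the metric moves with $t$ (which is exactly where the curvature and injectivity-radius hypotheses are used), and checking that all Moser-iteration constants are universal once the normalized hypotheses $|Rm|\le\frac{1}{100m^2}$, $|\lambda_0|\le\frac{1}{100m^2}$, $inj(x_0,\cdot)\ge 100$ are in force. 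Neither presents a serious difficulty.
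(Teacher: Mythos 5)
Your proof is correct and follows essentially the same two-step strategy as the paper: convert the trace evolution equation $|h|^2 = \partial_t H - \tfrac12\Delta H - \lambda_0 H$ into a local $L^2$ bound for $h$ in terms of the space-time $L^1$ norm of $H$, then upgrade to a pointwise bound at $(x_0,0)$ via parabolic Moser iteration on the subsolution $|h|$. The only difference is a small technical one: the paper uses a purely spatial (time-dependent) cutoff $\tilde\eta(y,t)=\psi(d_{g(t)}(y,x_0)-2)$ and then removes the time-boundary terms $\left.\int\tilde\eta H\,d\mu\right|_{t_1}^{t_2}$ by a mean-value choice of slices $t_1\in[-2,-1]$, $t_2\in[0,1]$ where $\int|H|\,d\mu$ is controlled by the space-time average; you instead include a temporal factor in the cutoff so that those boundary terms vanish identically. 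Both devices do the same job and cost only bounded extra terms ($\partial_t\tilde\eta$ in the paper, $\partial_t(\phi^2)$ for you), each controlled by the curvature and injectivity-radius hypotheses. Either route is fine.
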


\begin{proof}

For simplicity of notation, we denote $Ric-\lambda_0 g$ by $h$,  denote $R-m\lambda_0$ by $H$.

Recall that $|h|$ satisfies inequality (\ref{eqn:normricupper}). Locally,  $|Rm|$ is uniformly bounded.
So we should be able to control the $L^{\infty}$-norm of $|h|$ by the $L^2$-norm of $|h|$.
Actually, define
$\Omega=B_{g(0)}(x_0, 1), \; \Omega'=B_{g(0)} \left(x_0, \frac{1}{2} \right)$, $D=\Omega \times [-1, 0]$, $D'=\Omega' \times [-\frac{1}{2}, 0].$
By the second and the third condition, we obtain that $(\Omega, g(t))$ has a uniform Sobolev constant $\sigma=\sigma(m)$.
   Similar to Theorem 3.2 of~\cite{BWa2},  Moser iteration for the term $h=Ric-\lambda_0 g$ implies
\begin{align}
  \sup_{D'} |h| \leq C(m) \left \{\iint_D |h|^2 d\mu dt \right\}^{\frac{1}{2}}.
  \label{eqn:ricbdl2}
\end{align}

Choose cutoff function $\tilde{\eta}(y,t)=\psi(d_{g(t)}(y, x_0)-2)$, where $\psi$ is a smooth function which achieves value $1$ on $(-\infty, 0]$ and $0$ on
$[1, \infty)$, which also satisfies $|\psi'| \leq 2$.  Recall that $|\lambda_0| \leq \frac{1}{100m^2}$. So we have
\begin{align*}
  h(V, V) \leq \left( \frac{m-1}{100m^2} + |\lambda_0|  \right) g(V, V) \leq \frac{1}{100m} g(V, V)
\end{align*}
whenever $V \in TX$ and $|Rm|(V,V) \leq \frac{1}{100m^2}g(V,V)$. By the evolution of geodesic length, it is easy to check that
\begin{align*}
 &\Omega=B_{g(0)}(x_0, 1) \subset B_{g(t)}(x_0, 2),\\
 & B_{g(t)}(x_0, 3) \subset W=B_{g(0)}(x_0, 10),
\end{align*}
for every $-2 \leq t \leq 1$. Therefore $\tilde{\eta} \equiv 1$ on $\Omega$, $\tilde{\eta} \equiv 0$ outside $W$ whenever $-2 \leq t \leq 1$.

By mean value theorem of calculus, we can assume $t_1, t_2$ satisfies the following properties.
\begin{align}
  &-2 \leq t_1 \leq -1, &\quad \left. \int_W |H| d\mu \right|_{t_1}  \leq  \int_{-2}^{-1} \int_W |H|d\mu dt \leq \int_{-2}^{1} \int_W |H|d\mu dt. \label{eqn:mv_1}\\
  &0 \leq t_2 \leq 1, &\quad \left. \int_W |H| d\mu \right|_{t_2}  \leq  \int_{0}^{1} \int_W |H|d\mu dt \leq \int_{-2}^{1} \int_W |H|d\mu dt. \label{eqn:mv_2}
\end{align}

Using the evolution equation of normalized scalar curvature equation
(\ref{eqn:normscalar}), similar to the calculation in~\cite{BWa2}, we obtain that
\begin{align}
  &\quad \int_{t_1}^{t_2} \int_{\Omega} |h|^2 d\mu dt  \nonumber \\
  &\leq \int_{t_1}^{t_2} \int_X \tilde{\eta} |h|^2 d\mu dt \nonumber\\
  &=\int_{t_1}^{t_2} \int_X  \tilde{\eta} \left( \D{H}{t} -\frac{1}{2} \Delta H -\lambda_0 H \right) d\mu dt \nonumber\\
  &= \left. \left( \int_X \tilde{\eta} H d\mu \right) \right|_{t_1}^{t_2} - \int_{t_1}^{t_2} \int_X H \left( \D{}{t} \tilde{\eta}
  +\frac{1}{2} \Delta \tilde{\eta}+\left(\lambda_0 -\frac{H}{2} \right) \tilde{\eta} \right) d\mu dt \nonumber\\
  &\leq C\left\{ \left.  \int_W |H| d\mu \right|_{t=t_2} + \left.  \int_W |H| d\mu \right|_{t=t_1} + \int_{t_1}^{t_2} \int_W |H| d\mu dt \right\}.
  \label{eqn:ricrl1}
\end{align}
Note that $[-1, 0] \subset [t_1, t_2] \subset [-2, 1]$.
Combining (\ref{eqn:ricbdl2}), (\ref{eqn:mv_1}), (\ref{eqn:mv_2}) and (\ref{eqn:ricrl1}) yields
\begin{align}
  \sup_{D'} |h| \leq C \left \{\iint_D |h|^2 d\mu dt \right\}^{\frac{1}{2}}
  \leq C\left\{\int_{-2}^{1} \int_W |H| d\mu dt \right\}^{\frac{1}{2}},   \label{eqn:ricrl1_2}
\end{align}
where $C$ depends only on the dimension $m$.
\end{proof}

Combine Lemma~\ref{lma:lcRm_nm} and Theorem~\ref{thm:pseudo-locality_A22}, we have the following estimate.

\begin{lemma}
Suppose $\left\{ (X, x_0, g(t)), 0 \leq t \leq 1 \right\}$ satisfies all the conditions in Theorem~\ref{thm:pseudo-locality_A22}. Then
  \begin{align}
    |Ric-\lambda_0 g|(x, s) \leq C(m) s^{-\frac{m+4}{2}} \left\{ \int_0^{2s} \int_{\Omega} |R-m\lambda_0| d\mu dt \right\}^{\frac{1}{2}},
    \quad \forall x \in \Omega'=B_{g(0)} \left(x_0, \frac12\right), \;
     s \in \left(0, \delta_0 \right].
  \label{eqn:f20_ricup}
  \end{align}
  \label{lma:Ricbyt}
\end{lemma}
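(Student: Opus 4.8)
The plan is to deduce the estimate from Lemma~\ref{lma:lcRm_nm} by a parabolic rescaling centered at the point $(x,s)$ at a scale comparable to $\sqrt{s}$. Fix $x\in B_{g(0)}(x_0,\tfrac12)$ and $s\in(0,\delta_0]$, choose a small dimensional constant $\beta=\beta(m)\in(0,\tfrac14)$ (pinned down below), set $a\triangleq\sqrt{\beta s}$, and define
\[
  \hat g(t')\;\triangleq\;a^{-2}\,g\!\bigl(s+a^{2}t'\bigr),\qquad t'\in[-2,1].
\]
With $\tau=s+a^{2}t'$ one has $\tau\in[(1-2\beta)s,(1+\beta)s]\subset(0,2s)\subset(0,2\delta_0]$, so $\hat g$ is a normalized Ricci flow on $[-2,1]$ with coefficient $\tilde\lambda_0=a^{2}\lambda_0$ and $|\tilde\lambda_0|\le\beta\le\tfrac1{100m^{2}}$ once $\beta$ is small; this is the first hypothesis of Lemma~\ref{lma:lcRm_nm}, with $\hat g$ in place of its flow and $x$ in place of $x_0$.

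I would then check the remaining two hypotheses of Lemma~\ref{lma:lcRm_nm} using Theorem~\ref{thm:pseudo-locality_A22}. Write $\Omega''=B_{g(0)}(x_0,\tfrac34)$; on $\Omega''$ the pseudo-locality theorem gives $|Rm|(\cdot,\tau)\le\tau^{-1}$ and $|B_{g(\tau)}(z,\sqrt\tau)|\ge\kappa_0\tau^{m/2}$ for all $\tau\in(0,2\delta_0]$. For the curvature hypothesis, on $B_{\hat g(t')}(x,100)=B_{g(\tau)}(x,100a)$ we get $|\widehat{Rm}|=a^{2}|Rm|\le a^{2}\tau^{-1}\le\beta/(1-2\beta)\le\tfrac1{100m^{2}}$ provided $\beta\le\tfrac1{200m^{2}}$. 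For the injectivity radius hypothesis, since the curvature bound $\tau^{-1}$ and the volume bound $\kappa_0\tau^{m/2}$ hold on the scale-$\sqrt\tau$ ball $B_{g(\tau)}(x,\sqrt\tau)$ (with curvature $\times$ scale$^{2}=1$), Cheeger--Gromov--Taylor gives $\mathrm{inj}_{g(\tau)}(x)\ge c_0(m)\sqrt\tau$, hence after rescaling $\mathrm{inj}_{\hat g(t')}(x)=a^{-1}\mathrm{inj}_{g(\tau)}(x)\ge c_0(m)/\sqrt{2\beta}\ge100$ for $\beta\le c_0(m)^{2}/(2\cdot10^{4})$. Taking $\beta(m)$ to be the minimum of these dimensional thresholds finishes the verification, \emph{provided} one knows that the balls $B_{g(\tau)}(x,100a)$, $B_{g(\tau)}(x,\sqrt\tau)$ and $B_{g(s)}(x,10a)$ are contained in $\Omega''$ (resp.\ in $\Omega=B_{g(0)}(x_0,1)$), which is where those estimates are available.

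That containment is the crux and the step I expect to be the main obstacle, since the pseudo-locality bounds are stated on time-$0$ balls while Lemma~\ref{lma:lcRm_nm} sees the flowing metrics. Here I would invoke the one-sided distance-distortion estimate of Proposition~\ref{prn:dgoup} (Perelman's Lemma 8.3(b)): because $Ric(\cdot,\tau)\le(m-1)\tau^{-1}$ on $\Omega''$, optimizing the free radius $r_0$ in~(\ref{eqn:dgoup}) yields $\tfrac{d}{d\tau}d_{g(\tau)}(x,z)\ge\tfrac12\lambda_0 d_{g(\tau)}(x,z)-C(m)\tau^{-1/2}$ at any time where a neighborhood of $x,z$ lies in $\Omega''$. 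The point is that $\tau^{-1/2}$ is integrable at $\tau=0$, so integrating over $(0,\tau]$ (using that $d_{g(\tau)}(x,z)$ stays bounded, $X$ being closed) gives $d_{g(0)}(x,z)\le C(m)\bigl(d_{g(\tau)}(x,z)+\sqrt\tau\bigr)$. Since $x\in B_{g(0)}(x_0,\tfrac12)$, $\tau\le2\delta_0$, and $100a\le100\sqrt{\beta\delta_0}$, a routine continuity (open-and-closed in $\tau$) argument then upgrades this to the desired containments, after shrinking the dimensional constant $\delta_0$ if necessary.

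Finally I would unwind the scaling. Lemma~\ref{lma:lcRm_nm} applied to $\hat g$ gives $|Ric(\hat g)-\tilde\lambda_0\hat g|(x,0)\le C(m)\{\int_{-2}^{1}\!\int_{B_{\hat g(0)}(x,10)}|R(\hat g)-m\tilde\lambda_0|\,d\mu_{\hat g}\,dt'\}^{1/2}$. Using $|Ric(\hat g)-\tilde\lambda_0\hat g|_{\hat g(0)}=a^{2}|Ric-\lambda_0 g|_{g(s)}$, $|R(\hat g(t'))-m\tilde\lambda_0|=a^{2}|R(g(\tau))-m\lambda_0|$, $d\mu_{\hat g(t')}=a^{-m}d\mu_{g(\tau)}$, $dt'=a^{-2}d\tau$, together with $B_{\hat g(0)}(x,10)=B_{g(s)}(x,10a)\subset\Omega$ and $\tau$-range $\subset(0,2s)$, the right-hand side equals $C(m)a^{-m/2}\{\int_0^{2s}\!\int_\Omega|R-m\lambda_0|\,d\mu\,dt\}^{1/2}$, whence
\[
  |Ric-\lambda_0 g|(x,s)\le C(m)\,a^{-\frac{m+4}{2}}\Bigl\{\int_0^{2s}\!\!\int_\Omega|R-m\lambda_0|\,d\mu\,dt\Bigr\}^{1/2}
  = C(m)\,\beta^{-\frac{m+4}{4}}\,s^{-\frac{m+4}{4}}\Bigl\{\cdots\Bigr\}^{1/2}.
\]
Since $\beta=\beta(m)$ and $s\le\delta_0\le1$ (so $s^{-(m+4)/4}\le s^{-(m+4)/2}$), this yields~(\ref{eqn:f20_ricup}) with a dimensional constant $C(m)$. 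Apart from the containment step, everything else is bookkeeping of scaling weights.
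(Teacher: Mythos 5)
Your proposal is correct and follows essentially the same route as the paper's proof of Lemma~\ref{lma:Ricbyt}: a parabolic rescaling at scale comparable to $\sqrt{s}$ centered at $(x,s)$, verification of the hypotheses of Lemma~\ref{lma:lcRm_nm} via the pseudo-locality curvature/volume bounds together with Cheeger--Gromov--Taylor, ball containment via the one-sided distance-distortion estimate derived from Proposition~\ref{prn:dgoup} (the paper packages this as inequality~(\ref{eqn:dcompare_step1}), noting its proof is independent of the present lemma, exactly as you anticipate), and unscaling. Your scaling bookkeeping even recovers the sharper natural exponent $s^{-(m+4)/4}$ before relaxing it to the stated $s^{-(m+4)/2}$, which is consistent with the intended argument.
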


\begin{proof}
   By Theorem~\ref{thm:pseudo-locality_A22}, we have
   \begin{align}
     |Rm|(x, t) \leq t^{-1}, \quad \left|B_{g(t)} \left(x, \sqrt{t} \right)\right|_{d\mu_{g(t)}} \geq \kappa \left(\sqrt{t} \right)^m,
     \label{eqn:cv}
   \end{align}
   for every point $y \in B_{g(0)} \left( x_0, \frac{3}{4} \right)$, $t \in (0, 2\delta_0]$.

   Fix $x \in \Omega'=B_{g(0)} \left(x_0, \frac12\right), \; s \in (0, \delta_0]$.  By (\ref{eqn:cv}), the injectivity radius estimate in~\cite{CGT} yields that
   \begin{align}
     inj(x, t) \geq \xi \sqrt{s},    \label{eqn:injlow}
   \end{align}
   for some constant $\xi=\xi(m, \kappa(m))=\xi(m)$ whenever $\frac{s}{2} \leq t \leq 2s$.
Put
$$A=1000m\xi^{-1} s^{-\frac{1}{2}}, \quad \tilde{g}(t)=A^2 g\left(A^{-2}t +s \right).$$
Clearly, $\tilde{g}$ satisfies the evolution equation
$$\displaystyle \D{}{t} \tilde{g}= -\widetilde{Ric} + A^{-2}\lambda_0 \tilde{g}.$$
   In view of (\ref{eqn:cv}) and (\ref{eqn:injlow}), we have injectivity radius estimate and curvature estimate required by Lemma~\ref{lma:lcRm_nm}. It follows that
   \begin{align*}
     \left|\widetilde{Ric}-A^{-2} \lambda_0 \tilde{g} \right|^2 (x, 0) \leq C \int_{-1}^2 \int_{B_{\tilde{g}_0}(x, 10)} \left|\tilde{R}-mA^{-2}\lambda_0 \right|d \tilde{\mu} dt,
   \end{align*}
  which is the same as the following inequality before scaling:
  \begin{align}
    |Ric-\lambda_0 g|^2(x, s) \leq CA^{m+4} \int_{s-A^{-2}}^{s+2A^{-2}} \int_{B_{g(s)}\left(x, 10A^{-1}\right)} |R-m\lambda_0| d\mu dt.
  \label{eqn:ricbeforescale}
  \end{align}
  Recall that in the definition $A=1000m\xi^{-1} s^{-\frac{1}{2}}$, $1000m\xi^{-1}$ is a constant depending only on $m$. Therefore, (\ref{eqn:ricbeforescale}) implies
  \begin{align}
    |Ric-\lambda_0 g|(x, s) &\leq C(m)s^{-\frac{m+4}{2}}  \left\{ \int_{s-A^{-2}}^{s+2A^{-2}}
    \int_{B_{g(s)}\left(x, 10A^{-1} \right)} |R-m\lambda_0| d\mu dt \right\}^{\frac{1}{2}} \label{eqn:f20_riclocal}.
  \end{align}
  By inequality (\ref{eqn:dcompare_step1}), whose proof is independent,  we obtain that
  \begin{align}
    B_{g(s)} \left(x, 10A^{-1} \right) \subset B_{g(s)} \left(x, \frac{1}{8}-C\sqrt{s} \right) \subset B_{g(0)} \left(x, \frac{1}{8} \right)
    \subset B_{g(0)} \left(x_0, \frac{3}{4} \right) \subset \Omega=B_{g(0)}\left( x_0, 1 \right).
    \label{eqn:f20_precontaining}
  \end{align}
  Then inequality (\ref{eqn:f20_ricup}) follows from (\ref{eqn:f20_riclocal}), (\ref{eqn:f20_precontaining}) and the fact that
  $[s-A^{-2}, s+2A^{-2}] \subset [0, 2s]$.
\end{proof}

Recall Proposition~\ref{prn:dgoup}, estimate (\ref{eqn:f20_ricup}) implies that distance is almost expanding along the flow.
\begin{lemma}
  Suppose  $\left\{ (X, x_0, g(t)), 0 \leq t \leq 1 \right\}$ satisfies all the conditions in Theorem~\ref{thm:pseudo-locality_A22}.
  Then for every time $t_0 \in (0, \delta_0]$ and every two points $x_1, x_2 \in \Omega'=B_{g(0)} \left(x_0, \frac{1}{2} \right)$, we have
   \begin{align}
     &d_{g(t_0)}(x_1, x_2) \geq  d_{g(0)}(x_1, x_2) - C\sqrt{t_0}, \label{eqn:dcompare_step1} \\
     &d_{g(\delta_0)}(x_1, x_2) \geq d_{g(0)}(x_1, x_2) - C\left( \sqrt{t_0} + t_0^{-\frac{m+2}{2}}E^{\frac{1}{2}} \right), \label{eqn:dcompare_step2}
   \end{align}
  where $C=C(m)$ is a universal constant, $E=\int_0^{2 \delta_0} \int_{\Omega} |R-m\lambda_0| d\mu dt$.   In particular, if
  $E < \delta_0^{m+3}$, then we have
  \begin{align}
    d_{g(\delta_0)}(x_1, x_2) \geq d_{g(0)}(x_1, x_2) - CE^{\frac{1}{2(m+3)}}. \label{eqn:dcompare_step3}
  \end{align}
 \label{lma:dcompare}
\end{lemma}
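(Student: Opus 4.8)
The plan is to estimate the time-derivative of $d_{g(t)}(x_1,x_2)$ using Proposition~\ref{prn:dgoup} and then integrate. The point of Proposition~\ref{prn:dgoup} is that the lower bound on $\frac{d}{dt}d_{g(t)}(x_1,x_2)$ at a time $t_0$ depends only on an upper bound $(m-1)K$ for the Ricci curvature on balls of some radius $r_0$ around the two endpoints at time $t_0$. Theorem~\ref{thm:pseudo-locality_A22} supplies exactly such a bound: for $t_0\in(0,2\delta_0]$ and points in $\Omega'=B_{g(0)}(x_0,\tfrac34)$ we have $|Rm|(x,t_0)\le t_0^{-1}$, hence $Ric(x,t_0)\le (m-1)t_0^{-1}$. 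Taking $K=t_0^{-1}$ and $r_0=\sqrt{t_0}$ in Proposition~\ref{prn:dgoup} gives, at each such $t_0$,
\begin{align*}
  \left.\frac{d}{dt}d_{g(t)}(x_1,x_2)\right|_{t=t_0}
  \;\geq\; \tfrac12\lambda_0\, d_{g(t_0)}(x_1,x_2) - (m-1)\Bigl(\tfrac23 t_0^{-1}\cdot t_0^{1/2} + t_0^{-1/2}\Bigr)
  \;=\; \tfrac12\lambda_0\, d_{g(t_0)}(x_1,x_2) - \tfrac{5(m-1)}{3}\, t_0^{-1/2},
\end{align*}
provided the radius-$\sqrt{t_0}$ balls about $x_1,x_2$ stay inside $\Omega'$ (true for $t_0$ small since $x_i\in B_{g(0)}(x_0,\tfrac12)$ and distances move slowly). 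The term $\tfrac12\lambda_0 d$ is harmless: $|\lambda_0|\le 1$ and $d$ is bounded on $\Omega'$, so it contributes $O(t_0)$ after integration — absorbed into $C\sqrt{t_0}$. Integrating from $0$ to $t_0$, the $t^{-1/2}$ singularity is integrable and yields $d_{g(t_0)}(x_1,x_2)\ge d_{g(0)}(x_1,x_2) - C\sqrt{t_0}$, which is \eqref{eqn:dcompare_step1}.

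For \eqref{eqn:dcompare_step2} I would refine the time-derivative estimate using the sharper Ricci bound from Lemma~\ref{lma:Ricbyt} instead of the crude $|Rm|\le t^{-1}$. That lemma gives, for $x\in\Omega'$ and $s\in(0,\delta_0]$,
\begin{align*}
  |Ric-\lambda_0 g|(x,s)\;\leq\; C(m)\, s^{-\frac{m+4}{2}}\, E^{1/2},
\end{align*}
with $E=\int_0^{2\delta_0}\int_\Omega |R-m\lambda_0|\,d\mu\,dt$. Along the shortest $g(t)$-geodesic $\gamma$ from $x_1$ to $x_2$, the first variation of length is $\frac{d}{dt}d_{g(t)}(x_1,x_2) = -\int_\gamma Ric(\gamma',\gamma')\,ds = -\int_\gamma h(\gamma',\gamma')\,ds + \lambda_0 d_{g(t)}(x_1,x_2)$ where $h=Ric-\lambda_0 g$; so $\bigl|\frac{d}{dt}d_{g(t)}(x_1,x_2)-\lambda_0 d\bigr|\le \mathrm{Length}_{g(t)}(\gamma)\cdot\sup_\gamma|h|$. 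Here one must check the geodesic stays in $\Omega'$, which is where \eqref{eqn:dcompare_step1} gets used (it shows distances do not shrink much, and combined with the volume/pseudolocality bounds keeps $\gamma\subset\Omega'$ for $t\le\delta_0$). Thus on any interval $[t_0,\delta_0]$ we get $\frac{d}{dt}d_{g(t)}\ge \lambda_0 d - C E^{1/2} t^{-\frac{m+4}{2}}$; integrating, $\int_{t_0}^{\delta_0} t^{-\frac{m+4}{2}}\,dt\le C t_0^{-\frac{m+2}{2}}$. On the complementary interval $(0,t_0]$ we fall back on \eqref{eqn:dcompare_step1}-type reasoning (losing $C\sqrt{t_0}$). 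Adding the two contributions gives
\begin{align*}
  d_{g(\delta_0)}(x_1,x_2)\;\geq\; d_{g(0)}(x_1,x_2) - C\Bigl(\sqrt{t_0} + t_0^{-\frac{m+2}{2}}E^{1/2}\Bigr),
\end{align*}
which is \eqref{eqn:dcompare_step2}.

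Finally, \eqref{eqn:dcompare_step3} is pure optimization: the right-hand error $\sqrt{t_0}+t_0^{-\frac{m+2}{2}}E^{1/2}$ is minimized (up to constants) by balancing the two terms, i.e. choosing $t_0$ with $t_0^{1/2}=t_0^{-(m+2)/2}E^{1/2}$, that is $t_0 = E^{1/(m+3)}$; this is a legitimate choice provided $t_0\le\delta_0$, which holds exactly when $E<\delta_0^{m+3}$. Substituting back, both terms become comparable to $E^{1/(2(m+3))}$, yielding $d_{g(\delta_0)}(x_1,x_2)\ge d_{g(0)}(x_1,x_2) - C E^{1/(2(m+3))}$ with $C=C(m)$.

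I expect the main obstacle to be \textbf{the geometric bookkeeping} rather than any single inequality: one must verify that for all $t\le\delta_0$ the shortest $g(t)$-geodesic between $x_1,x_2$ (and the small balls around its points and around the endpoints) remains inside the region $\Omega'=B_{g(0)}(x_0,\tfrac34)$ where the pseudo-locality curvature bound of Theorem~\ref{thm:pseudo-locality_A22} and the Ricci estimate of Lemma~\ref{lma:Ricbyt} are available. This is a mild bootstrap — \eqref{eqn:dcompare_step1} controls how far distances can drift, and then the drift bound feeds back into the containment — but it has to be stated carefully, since \eqref{eqn:dcompare_step1} is invoked inside the proof of Lemma~\ref{lma:Ricbyt} (via \eqref{eqn:dcompare_step1}'s role in \eqref{eqn:f20_precontaining}) and so the logical order of the three estimates must be: first \eqref{eqn:dcompare_step1} (using only $|Rm|\le t^{-1}$, hence no circularity), then the containment and Lemma~\ref{lma:Ricbyt}, then \eqref{eqn:dcompare_step2}, then the optimization for \eqref{eqn:dcompare_step3}. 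A secondary technical point is that $d_{g(t)}(x_1,x_2)$ is only Lipschitz in $t$, not $C^1$; the derivative estimates should be read as forward/backward Dini derivative bounds, and the integration justified by standard comparison for absolutely continuous functions.
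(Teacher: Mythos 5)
Your proposal is correct and follows essentially the same route as the paper: prove \eqref{eqn:dcompare_step1} from Proposition~\ref{prn:dgoup} with $K=t_0^{-1}$, $r_0=\sqrt{t_0}$ and the pseudo-locality bound $|Rm|\le t^{-1}$; then use Lemma~\ref{lma:Ricbyt} to control the derivative of (log of) geodesic length on $[t_0,\delta_0]$, giving \eqref{eqn:dcompare_step2}; then balance the two error terms by choosing $t_0=E^{1/(m+3)}$, which is admissible when $E<\delta_0^{m+3}$, to obtain \eqref{eqn:dcompare_step3}. You also correctly flag the forward reference (\eqref{eqn:dcompare_step1} is invoked inside the proof of Lemma~\ref{lma:Ricbyt} for containment \eqref{eqn:f20_precontaining}) and that its proof uses only the crude pseudo-locality bound, so there is no circularity — a point the paper handles only by remark.
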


\begin{proof}
Let us first prove inequality (\ref{eqn:dcompare_step1}).

By inequality (\ref{eqn:dgoup}) and inequality (\ref{eqn:pseudo}), we have
\begin{align*}
  \frac{d}{dt} d_{g(t)}(x_1, x_2) \geq \frac{1}{2} \lambda_0 d_{g(0)}(x_1, x_2) - Ct^{-\frac{1}{2}},  \quad \forall t \in (0, t_0],
\end{align*}
where $C$ is a universal constant. Consequently, we have
\begin{align*}
  &\qquad \frac{d}{dt} \left(  e^{-\frac{\lambda_0 t}{2} } d_{g(t)}(x_1, x_2) \right) \geq -Ce^{-\frac{\lambda_0 t}{2}} t^{-\frac{1}{2}} \geq -C t^{-\frac{1}{2}}, \\
  &\Rightarrow   e^{-\frac{\lambda_0 t_0}{2}} d_{g(t_0)}(x_1, x_2) - d_{g(0)}(x_1, x_2) \geq -C\sqrt{t_0},\\
  &\Rightarrow   d_{g(t_0)}(x_1, x_2) \geq e^{\frac{\lambda_0 t_0}{2}} \left( d_{g(0)}(x_1, x_2) -C\sqrt{t_0} \right).
\end{align*}
If $\lambda_0 \geq 0$, we have already obtain inequality (\ref{eqn:dcompare_step1}) trivially.
If $\lambda_0<0$, we have
\begin{align*}
   d_{g(t_0)}(x_1, x_2) &\geq  \left( d_{g(0)}(x_1, x_2) -C\sqrt{t_0} \right) + \left(e^{\frac{\lambda_0 t_0}{2}}-1 \right) \left( d_{g(0)}(x_1, x_2) -C\sqrt{t_0} \right)\\
      &\geq  \left( d_{g(0)}(x_1, x_2) -C\sqrt{t_0} \right)  -Ct_0 \left|d_{g(0)}(x_1, x_2) -C\sqrt{t_0} \right|\\
      &\geq  \left( d_{g(0)}(x_1, x_2) -C\sqrt{t_0} \right)  -Ct_0\\
      &\geq  d_{g(0)}(x_1, x_2) -C\sqrt{t_0}.
\end{align*}
So we finish the proof of inequality (\ref{eqn:dcompare_step1}).

We continue to prove inequality (\ref{eqn:dcompare_step2}).
Along the normalized Ricci flow, the derivative of logarithm of geodesic length is bounded by the term $|Ric-\lambda_0 g|$ on the geodesic.
Therefore, estimate (\ref{eqn:f20_ricup}) yields the following inequalities.
    \begin{align*}
     \left|\log \frac{d_{g(\delta_0)}(x_1, x_2)}{d_{g(t_0)}(x_1, x_2)} \right| \leq  C \int_{t_0}^{\delta_0} t^{-\frac{m+4}{2}} E^{\frac{1}{2}} dt
     \leq  CE^{\frac{1}{2}} \left( t_0^{-\frac{m+2}{2}} - \delta_0^{-\frac{m+2}{2}} \right) \leq C E^{\frac{1}{2}} t_0^{-\frac{m+2}{2}}.
    \end{align*}
It follows that
    \begin{align*}
      d_{g(\delta_0)}(x_1, x_2) &\geq  d_{g(t_0)}(x_1, x_2) e^{-Ct_0^{-\frac{m+2}{2}}E^{\frac{1}{2}}}\\
         &\geq \left( d_{g(0)}(x_1, x_2) - Ct_0^{\frac{1}{2}} \right) e^{-Ct_0^{-\frac{m+2}{2}}E^{\frac{1}{2}}}\\
         &=\left( d_{g(0)}(x_1, x_2) - Ct_0^{\frac{1}{2}} \right) + \left(e^{-Ct_0^{-\frac{m+2}{2}}E^{\frac{1}{2}}}-1 \right) \cdot \left( d_{g(0)}(x_1, x_2) - Ct_0^{\frac{1}{2}} \right)\\
	 &\geq \left( d_{g(0)}(x_1, x_2) - Ct_0^{\frac{1}{2}} \right) -Ct_0^{-\frac{m+2}{2}}E^{\frac{1}{2}} \left| d_{g(0)}(x_1, x_2) - Ct_0^{\frac{1}{2}} \right|\\
	 &\geq d_{g(0)}(x_1, x_2) - C \left( t_0^{\frac{1}{2}} + t_0^{-\frac{m+2}{2}}E^{\frac{1}{2}} \right).
    \end{align*}
So we finish the proof of inequality (\ref{eqn:dcompare_step2}).

If $E< \delta_0^{m+3}$, then $E^{\frac{1}{m+3}}<\delta_0$. Let $t_0=E^{\frac{1}{m+3}}$ and plug it into inequality (\ref{eqn:dcompare_step2}), we obtain
inequality (\ref{eqn:dcompare_step3}).
\end{proof}

\begin{corollary}
  Same conditions as in Lemma~\ref{lma:dcompare}.    If $E << \delta_0^{m+3}$, $x_1 \in \Omega'=B_{g(0)} \left(x_0, \frac{1}{2} \right)$,
then
\begin{align}
   B_{g(\delta_0)} \left(x_1, r-CE^{\frac{1}{2(m+3)}} \right) \subset B_{g(0)} \left(x_1, r\right),
\label{eqn:containing}
\end{align}
for every $0<r<\frac{1}{2}-d_{g(0)}(x_0, x_1)$.   In particular, we have
\begin{align}
   B_{g(\delta_0)} \left(x_0, r-CE^{\frac{1}{2(m+3)}}\right) \subset B_{g(0)} \left(x_0, r\right), \quad \forall \; 0<r<\frac12.
\label{eqn:containing_center}
\end{align}
\label{cly:containing}
\end{corollary}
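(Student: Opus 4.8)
The plan is to read Corollary~\ref{cly:containing} off directly from inequality~(\ref{eqn:dcompare_step3}) of Lemma~\ref{lma:dcompare}, which under the hypothesis $E \ll \delta_0^{m+3}$ (in particular $E<\delta_0^{m+3}$, so the lemma applies) asserts $d_{g(\delta_0)}(x_1,x_2)\geq d_{g(0)}(x_1,x_2)-CE^{\frac{1}{2(m+3)}}$ for all $x_1,x_2\in\Omega'=B_{g(0)}(x_0,\frac12)$. In words: along the flow from time $0$ to time $\delta_0$, distances between points of $\Omega'$ cannot decrease by more than $CE^{\frac{1}{2(m+3)}}$. So a point that is $g(\delta_0)$-distance less than $r-CE^{\frac{1}{2(m+3)}}$ from $x_1$ should be $g(0)$-distance less than $r$ from $x_1$, which is exactly~(\ref{eqn:containing}); and~(\ref{eqn:containing_center}) is the special case $x_1=x_0$, for which the radius constraint $0<r<\frac12-d_{g(0)}(x_0,x_1)$ reads $0<r<\frac12$. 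We may assume $r-CE^{\frac{1}{2(m+3)}}>0$, since otherwise the left-hand ball is empty and there is nothing to prove.

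The subtlety --- and the only place the argument is not a one-line substitution --- is that~(\ref{eqn:dcompare_step3}) requires \emph{both} endpoints to lie in $\Omega'$, whereas a given point $y\in B_{g(\delta_0)}(x_1,r-CE^{\frac{1}{2(m+3)}})$ is a priori only known to be $g(\delta_0)$-close to $x_1$. I would handle this with a connectedness argument along a $g(\delta_0)$-minimizing geodesic: let $\gamma:[0,L]\to X$ be a unit-speed $g(\delta_0)$-minimizing geodesic from $x_1$ to $y$, so $L=d_{g(\delta_0)}(x_1,y)<r-CE^{\frac{1}{2(m+3)}}$, and let $s^\ast$ be the supremum of those $s$ for which $\gamma([0,s])\subset\Omega'$. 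For $s<s^\ast$ the sub-arc $\gamma|_{[0,s]}$ is itself $g(\delta_0)$-minimizing and lies in $\Omega'$, so applying~(\ref{eqn:dcompare_step3}) to $x_1$ and $\gamma(s)$ gives $d_{g(0)}(x_1,\gamma(s))\leq s+CE^{\frac{1}{2(m+3)}}<L+CE^{\frac{1}{2(m+3)}}<r$, whence $d_{g(0)}(x_0,\gamma(s))\leq d_{g(0)}(x_0,x_1)+r<\frac12$. Letting $s\uparrow s^\ast$ and using continuity of $d_{g(0)}(x_0,\cdot)$ along $\gamma$, the point $\gamma(s^\ast)$ is still strictly inside $\Omega'$; since $\Omega'$ is open, $\gamma$ cannot exit at $s^\ast$, so an open--closed argument forces $s^\ast=L$ and hence $y=\gamma(L)\in\Omega'$. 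This is precisely where the hypothesis $r<\frac12-d_{g(0)}(x_0,x_1)$ is used, as it supplies the fixed gap $d_{g(0)}(x_0,x_1)+r<\frac12$.

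Once $y\in\Omega'$ is in hand, I would apply~(\ref{eqn:dcompare_step3}) one last time to the pair $x_1,y$ to obtain $d_{g(0)}(x_1,y)\leq d_{g(\delta_0)}(x_1,y)+CE^{\frac{1}{2(m+3)}}=L+CE^{\frac{1}{2(m+3)}}<r$, i.e. $y\in B_{g(0)}(x_1,r)$, which establishes~(\ref{eqn:containing}); setting $x_1=x_0$ then gives~(\ref{eqn:containing_center}). I expect the only real obstacle to be the bookkeeping in the continuity step above --- one has to keep all the strict inequalities with a uniform margin so that the ``closed'' half of the open--closed argument goes through --- since the substantive geometric estimate is already contained in Lemma~\ref{lma:dcompare}.
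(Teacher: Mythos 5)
Your proposal is correct and follows the same strategy as the paper, whose entire proof of this corollary is the single sentence ``Direct application of inequality (\ref{eqn:dcompare_step3}).'' You have, however, correctly identified the one point the paper leaves implicit: Lemma~\ref{lma:dcompare} is stated for pairs of points both lying in $\Omega'=B_{g(0)}(x_0,\frac12)$, whereas a point $y$ of $B_{g(\delta_0)}(x_1,r-CE^{\frac{1}{2(m+3)}})$ is a priori known only to be $g(\delta_0)$-close to $x_1$, and your open--closed argument along the $g(\delta_0)$-minimizing geodesic (with the fixed margin coming from $d_{g(0)}(x_0,x_1)+r<\frac12$) is exactly the verification needed to make the ``direct application'' legitimate.
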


\begin{proof}
 Direct application of inequality (\ref{eqn:dcompare_step3}).
\end{proof}

Intuitively, an almost expanding map which almost fix volume must be an almost isometry.  This observation can be achieved
precisely by Theorem~\ref{thm:isometry_almost}.  However, in order to obtain Theorem~\ref{thm:isometry_almost}, we first
need an estimate to prevent the distance to expand too fast, which is the meaning of the following Lemma.

\begin{lemma}
   Suppose $\left\{ (X, x_0, g(t)), 0 \leq t \leq 1 \right\}$ satisfies all the conditions in Theorem~\ref{thm:pseudo-locality_A22}.

   Let $\Omega=B_{g(0)}(x_0, 1), \Omega'=B_{g(0)} \left(x_0, \frac{1}{2} \right)$. For every $l<\frac{1}{2}$, define
\begin{align*}
  &A_{+,l}=\sup_{B_{g(0)}(x, r) \subset \Omega', 0<r\leq l} \omega_m^{-1}r^{-m} \left| B_{g(0)}(x, r)\right|_{d\mu_{g(0)}}, \\
  &A_{-,l}=\inf_{B_{g(\delta_0)}(x, r) \subset \Omega', 0<r\leq l} \omega_m^{-1}r^{-m} \left| B_{g(\delta_0)}(x, r)\right|_{d\mu_{g(\delta_0)}}.
\end{align*}

If $x_1, x_2 \in \Omega''=B_{g(0)}\left(x_0, \frac{1}{4}\right)$, $l=d_{g(0)}(x_1, x_2) < \frac{1}{8}$, then we have
  \begin{align}
    l- C E^{\frac{1}{2(m+3)}} \leq   d_{g(\delta_0)}(x_1, x_2) \leq
  l+CA_{+,4l} \left\{ \left| \frac{A_{+,l}}{A_{-,l}} -1 \right|^{\frac{1}{m}} +  l^{-\frac{1}{m}}E^{\frac{1}{2m(m+3)}}  \right\}l
    \label{eqn:metricequivalent}
  \end{align}
  whenever  $\displaystyle E=\int_0^{2 \delta_0} \int_{\Omega} |R-m\lambda_0| d\mu dt<<  l^{2(m+3)}$.
 \label{lma:metricequivalent}
\end{lemma}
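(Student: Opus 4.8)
The lower bound in (\ref{eqn:metricequivalent}) is exactly (\ref{eqn:dcompare_step3}) of Corollary~\ref{cly:containing} applied to $x_1,x_2\in\Omega''\subset\Omega'$, so the content is the upper bound: the flow cannot expand the distance of two $d_{g(0)}$-nearby points by more than the ``volume defect'' at scale $l$ (measured by $A_{+,l}/A_{-,l}-1$) and $E$ permit. My plan rests on two ingredients. First, volumes of \emph{fixed} sets are almost preserved: since $\partial_t\,d\mu_{g(t)}=-\tfrac12(R-m\lambda_0)\,d\mu_{g(t)}$, for any Borel $U\subset\Omega$ and $0\le\tau\le\delta_0$ one has $\bigl|\,|U|_{d\mu_{g(\tau)}}-|U|_{d\mu_{g(0)}}\,\bigr|\le\tfrac12\int_0^{2\delta_0}\!\!\int_\Omega|R-m\lambda_0|\,d\mu\,dt=\tfrac12 E$ --- it is the space--time $L^1$-bound on $R-m\lambda_0$, not any pointwise bound, that enters here, so the $t\to0$ blow-up of curvature is harmless. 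Second, the already established containment (\ref{eqn:containing}) gives $B_{g(\delta_0)}(y,\rho-CE^{1/(2(m+3))})\subset B_{g(0)}(y,\rho)$ for $y\in\Omega'$, i.e.\ $d_{g(\delta_0)}\ge d_{g(0)}-CE^{1/(2(m+3))}$ on $\Omega'$.

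Set $L:=d_{g(\delta_0)}(x_1,x_2)$ and $\epsilon_0:=CE^{1/(2(m+3))}$; we may assume $L>l$, otherwise the bound is trivial. After a preliminary step giving $L\le C(m)l$ (see below), the key move is a \emph{two-ball} volume comparison with carefully chosen radii. Consider the $g(\delta_0)$-balls $B_1=B_{g(\delta_0)}(x_1,\tfrac{L+l}{2})$ and $B_2=B_{g(\delta_0)}(x_2,\tfrac{L-l}{2})$. Their radii add up to exactly $L=d_{g(\delta_0)}(x_1,x_2)$, so $B_1,B_2$ have disjoint interiors; on the other hand, by the containment above and the triangle inequality for $d_{g(0)}$ (using $d_{g(0)}(x_1,x_2)=l$), both lie inside $B^{*}:=B_{g(0)}\bigl(x_1,\tfrac{L+l}{2}+\epsilon_0\bigr)$. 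This particular split is the one minimising the radius of such an enclosing $g(0)$-ball, so that its radius exceeds $\tfrac{L+l}{2}$ only by $\epsilon_0$ --- this is what makes the volume accounting effective (splits like $L/2,L/2$ or $l,L-l$ leave the volume difference of order one and give nothing). Comparing volumes, $A_{-,\cdot}\,\omega_m\bigl[(\tfrac{L+l}{2})^m+(\tfrac{L-l}{2})^m\bigr]\le|B_1|_{g(\delta_0)}+|B_2|_{g(\delta_0)}=|B_1\cup B_2|_{g(\delta_0)}\le|B^{*}|_{g(\delta_0)}\le|B^{*}|_{g(0)}+\tfrac{E}{2}\le A_{+,\cdot}\,\omega_m\bigl(\tfrac{L+l}{2}+\epsilon_0\bigr)^m+\tfrac{E}{2}$, where the first inequality uses the definition of $A_{-,\cdot}$ (all balls are in $\Omega'$ by the crude bound) and the last line uses the fixed-set volume estimate and the definition of $A_{+,\cdot}$.

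From this I would subtract $A_{-,\cdot}\omega_m(\tfrac{L+l}{2})^m$, bound $(\tfrac{L+l}{2}+\epsilon_0)^m-(\tfrac{L+l}{2})^m\le m(\tfrac{L+l}{2}+\epsilon_0)^{m-1}\epsilon_0$ and $\tfrac{L+l}{2}\le C(m)l$, to get $A_{-,\cdot}\bigl(\tfrac{L-l}{2}\bigr)^m\le (A_{+,\cdot}-A_{-,\cdot})\,C(m)l^m+C(m)l^{m-1}\epsilon_0+\tfrac{E}{2\omega_m}$. Taking $m$-th roots, using $(a+b+c)^{1/m}\le a^{1/m}+b^{1/m}+c^{1/m}$, and absorbing $(E/\omega_m)^{1/m}$ into the $\epsilon_0$-term (legitimate precisely because $E\ll l^{2(m+3)}$, so $E^{1/m}\ll l^{(m-1)/m}E^{1/(2m(m+3))}$ after comparing exponents), yields $L-l\le C(m)\,l\bigl[(A_{+,\cdot}/A_{-,\cdot}-1)^{1/m}+l^{-1/m}E^{1/(2m(m+3))}\bigr]$, which is the claimed estimate once one records that $B^{*}$ has radius $\le4l$ (so the relevant $A_{+,\cdot}$ is $A_{+,4l}$), and that the scales $l,\tfrac{L+l}{2},4l$ at which the $A_{\pm}$ appear are mutually comparable up to universal factors --- the $A_{+}$'s because for $\delta_0$ small $\Omega$ is almost Euclidean, so all $A_{+,r}$ with $r\le4l<\tfrac12$ are close to $1$ by Colding's volume almost-rigidity, and the $A_{-}$'s by Bishop--Gromov monotonicity for $g(\delta_0)$.

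The main obstacle is the preliminary bound $L\le C(m)l$. There is no pointwise comparison of $g(t)$ with $g(0)$ near $t=0$ (the bound $|Rm|\le t^{-1}$ of Theorem~\ref{thm:pseudo-locality_A22} is not time-integrable), so this is not immediate; I would obtain it by continuity in $t$: $t\mapsto d_{g(t)}(x_1,x_2)$ is continuous with value $l<\tfrac18$ at $t=0$, and if it ever reached $\tfrac18$ there would be a first such time $t_{*}\le\delta_0$; on $[0,t_{*}]$ all the balls above remain in $\Omega$ resp.\ $\Omega'$ (after shrinking $\delta_0=\delta_0(m)$), and the same two-ball comparison run with $\delta_0$ replaced by $t_{*}$ and $\epsilon_0$ replaced by the crude defect $C\sqrt{t_{*}}$ from (\ref{eqn:dcompare_step1}) forces $d_{g(t_{*})}(x_1,x_2)$ to stay well below $\tfrac18$, a contradiction; hence $L\le C(m)l$ throughout. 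The remaining difficulties --- the elementary inequality manipulations and the cross-scale volume-ratio comparisons mentioned above --- are routine.
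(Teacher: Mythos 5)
Your approach is genuinely different from the paper's.  The paper never tries to compare $d_{g(\delta_0)}(x_1,x_2)$ with $l$ directly; instead it fixes the $g(0)$-ball $B_{g(0)}(x_1,l)$, lets $r_0$ be the radius of the largest $g(0)$-ball inside it that is disjoint from the shrunken $g(\delta_0)$-ball $B_{g(\delta_0)}(x_1,l-C_0E^{1/(2(m+3))})$, bounds $r_0$ by a single-ball volume defect argument, then locates a point $x_3$ with $d_{g(0)}(x_2,x_3)<3r_0$ and $d_{g(\delta_0)}(x_1,x_3)<l$, and finally converts the small $g(0)$-distance $d_{g(0)}(x_2,x_3)$ into a small $g(\delta_0)$-distance by covering the $g(0)$-geodesic from $x_2$ to $x_3$ with $g(\delta_0)$-balls of radius $r_0$ and counting them by a packing/volume argument.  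The triangle inequality then finishes.  No a~priori bound on $d_{g(\delta_0)}(x_1,x_2)$ is ever needed.  Your two-ball split $B_{g(\delta_0)}(x_1,\tfrac{L+l}{2})\sqcup B_{g(\delta_0)}(x_2,\tfrac{L-l}{2})\subset B_{g(0)}(x_1,\tfrac{L+l}{2}+\epsilon_0)$ is an appealing and more direct packing inequality, but it is a different mechanism.

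The real gap is exactly the one you flag: you need $L:=d_{g(\delta_0)}(x_1,x_2)\le C(m)l$ (indeed something like $L\le 3l$, so that the radii $\tfrac{L\pm l}{2}$ stay $\lesssim l$ and the sets stay inside $\Omega'$) before the two-ball comparison can be run, and your continuity argument does not close it.  Concretely: (a)~the threshold $1/8$ you propose is the wrong scale --- showing $d_{g(t)}(x_1,x_2)<\tfrac18$ for all $t$ gives nothing when $l\ll\tfrac18$, whereas you need a bound proportional to $l$; (b)~at an intermediate first-escape time $t_*$, the only distance comparison available is the crude $d_{g(t_*)}\ge d_{g(0)}-C\sqrt{t_*}$ from (\ref{eqn:dcompare_step1}), whose error $C\sqrt{t_*}$ is a fixed multiple of $\sqrt{t_*}$ with a universal $C$; for the two-ball comparison to force a contradiction, this error must be small compared to the \emph{smaller} radius $\tfrac{L_*-l}{2}$ divided by roughly $m$ (because the term you gain, $(\tfrac{L_*-l}{2}-C\sqrt{t_*})^m$, must beat the loss $(\tfrac{L_*+l}{2}+C\sqrt{t_*})^m-(\tfrac{L_*+l}{2}-C\sqrt{t_*})^m\approx m(\tfrac{L_*+l}{2})^{m-1}\cdot 2C\sqrt{t_*}$), and this fails as soon as $\sqrt{t_*}$ is comparable to $l/m$, which certainly happens for $t_*$ in $[cl^2,\delta_0]$; and (c)~the quantities $A_{\pm,\cdot}$ are only defined at time $\delta_0$, and you have no volume lower bounds for $g(t_*)$-balls at intermediate times at scales $\gg\sqrt{t_*}$ (the curvature bound $|Rm|\le t_*^{-1}$ and the non-collapsing $|B_{g(t)}(x,\sqrt t)|\ge\kappa_0 t^{m/2}$ only control scales up to $\sqrt{t_*}$).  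The paper's construction is designed precisely to sidestep this circularity, by measuring the defect through the maximal empty radius $r_0$, which is controlled unconditionally by one-ball volume comparison, and then transporting distance information through the covering/packing lemma rather than through a pointwise comparison of $d_{g(\delta_0)}$ and $d_{g(0)}$.  A secondary, but fixable, discrepancy is that even granting $L\le 3l$, your argument produces the ratio $A_{+,\cdot}/A_{-,\cdot}$ at scales $\sim 2l$ rather than the scale $l$ in the statement; this does not matter for the downstream use in Theorem~\ref{thm:isometry_almost} (both are $1+O(l^2)$), but it is not the inequality as written.
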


\begin{proof}
  The left hand side of inequality (\ref{eqn:metricequivalent}) follows directly from inequality  (\ref{eqn:dcompare_step3}).  So we focus on
  the proof of the right hand side of inequality (\ref{eqn:metricequivalent}).

  We denote the constant in Lemma~\ref{lma:dcompare} by $C_0$ and fix it in this proof. All the other $C$'s may be different from line to line.

 Among all the geodesic balls in $B_{g(0)}(x_1, l)$, let $B_{g(0)}(x, r_0)$ be the largest geodesic ball (counted by radius under $g(0)$) such that
$$\displaystyle B_{g(0)}(x, r_0) \cap B_{g(\delta_0)}\left(x_1,  l- C_0 E^{\frac{1}{2(m+3)}}  \right) =\emptyset.$$
See Figure~\ref{fig:balls} for intuition.

\begin{figure}[h]
 \begin{center}
    \psfrag{A}[c][c]{$x_1$}
    \psfrag{B}[c][c]{$x_2$}
    \psfrag{C}[c][c]{$x_3$}
    \psfrag{D}[c][c]{$x$}
    \psfrag{B1}[l][l]{$red \; ball=B_{g(0)}(x_1, l)$}
    \psfrag{B2}[l][l]{$blue \; ball=B_{g(\delta_0)}(x_1, l-C_0E^{\frac{1}{2(m+3)}})$}
    \psfrag{B3}[l][l]{$green \; ball=B_{g(0)}(x_2, 3r_0)$}
    \psfrag{B4}[l][l]{$yellow \; ball=B_{g(0)}(x, r_0)$}
    \psfrag{t0}[c][c]{$t=0$}
    \psfrag{t1}[c][c]{$t=\delta_0$}
    \psfrag{t}[c][c]{$t$}
    \includegraphics[width=0.8\columnwidth]{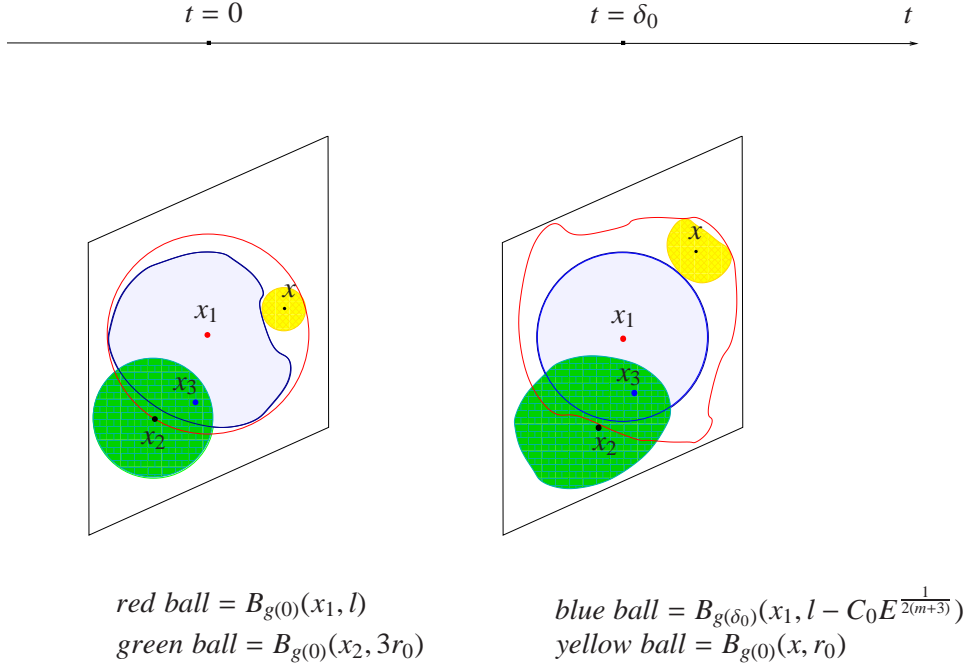}
  \caption{The relationship among the balls}
  \label{fig:balls}
 \end{center}
\end{figure}

 \begin{claim}
   The radius $r_0$ is bounded from above by the following inequality.
   \begin{align}
    r_0 \leq   \left\{ \left| \frac{A_{+,l}}{A_{-,l}} -1 \right| + Cl^{-1} E^{\frac{1}{2(m+3)}} \right\}^{\frac{1}{m}} l + C_0 E^{\frac{1}{2(m+3)}}.
   \label{eqn:extrarup}
\end{align}
\label{clm:f19_rupper}
 \end{claim}

 By definition,  $B_{g(0)}(x, r_0)$ and the ball $B_{g(\delta_0)}\left(x_1,  l- C_0 E^{\frac{1}{2(m+3)}}  \right)$ are disjoint. Moreover, Corollary~\ref{cly:containing} implies
 that $\displaystyle B_{g(0)}(x, r_0) \cup B_{g(\delta_0)} \left(x_1,  l- C_0 E^{\frac{1}{2(m+3)}}  \right) \subset B_{g(0)}(x_1, l)$.  Therefore, we have
\begin{align}
  \left| B_{g(0)}(x, r_0) \right|_{d\mu_{g(\delta_0)}} &\leq \left| B_{g(0)} (x_1, l)\right|_{d\mu_{g(\delta_0)}}
  - \left| B_{g(\delta_0)} \left(x_1,  l- C_0 E^{\frac{1}{2(m+3)}} \right)\right|_{d\mu_{g(\delta_0)}}  \nonumber\\
 &\leq  \left| B_{g(0)}(x_1, l)\right|_{d\mu_{g(0)}}
  - \left| B_{g(\delta_0)} \left(x_1,  l- C_0 E^{\frac{1}{2(m+3)}} \right)\right|_{d\mu_{g(\delta_0)}} +E.
  \label{eqn:f19_rup}
\end{align}
By Corollary~\ref{cly:containing}, we have  $\displaystyle B_{g(\delta_0)} \left(x, r_0- C_0 E^{\frac{1}{2(m+3)}}  \right) \subset B_{g(0)}(x, r_0)$. Note that $r_0<l$ by definition. It follows
from the definition of $A_{-,l}$ that
\begin{align}
  \left| B_{g(0)}(x, r_0) \right|_{d\mu_{g(\delta_0)}} \geq \left| B_{g(\delta_0)} \left(x, r_0- C_0 E^{\frac{1}{2(m+3)}}  \right) \right|_{d\mu_{g(\delta_0)}}
  \geq A_{-,l} \left( r_0- C_0 E^{\frac{1}{2(m+3)}}  \right)^m.
  \label{eqn:f19_rlow}
\end{align}
Plugging (\ref{eqn:f19_rlow}) into (\ref{eqn:f19_rup}) yields
\begin{align}
  A_{-,l} \left( r_0- C_0 E^{\frac{1}{2(m+3)}}  \right)^m
  &\leq \left| B_{g(0)}(x_1, l)\right|_{d\mu_{g(0)}} - \left| B_{g(\delta_0)} \left(x_1,  l- C_0E^{\frac{1}{2(m+3)}} \right)\right|_{d\mu_{g(\delta_0)}} +E \nonumber\\
  &\leq A_{+,l}l^m -A_{-,l} l^m \left( 1- C_0 l^{-1}E^{\frac{1}{2(m+3)}}\right)^m  +E \nonumber\\
  &\leq A_{+,l}l^m -A_{-,l} l^m \left( 1-2mC_0 l^{-1} E^{\frac{1}{2(m+3)}}\right) +E \nonumber\\
  &\leq l^m \left\{ \left( A_{+,l}-A_{-,l} \right)  + 2mC_0A_{-,l} l^{-1} E^{\frac{1}{2(m+3)}} + l^{-m}E \right\},
  \label{eqn:f19_rup_2}
\end{align}
where we use the fact that $\displaystyle C_0 l^{-1}E^{\frac{1}{2(m+3)}} << 1$ in the third step, last step respectively.
By the non-collapsed condition at time $t=\delta_0$, we obtain that $A_{-, l}\geq C(m,\kappa(m))=c(m)$. By the definition of $A_{-,l}$, we automatically have
$A_{-,l} \leq 1$.  Note also that $\displaystyle l^{-m}E << l^{-1} E^{\frac{1}{2(m+3)}}$. We obtain
\begin{align}
    2mC_0A_{-,l} l^{-1} E^{\frac{1}{2(m+3)}} + l^{-m}E < C A_{-,l} l^{-1} E^{\frac{1}{2(m+3)}}.
    \label{eqn:f19_bridge}
\end{align}
Combining (\ref{eqn:f19_bridge}) and (\ref{eqn:f19_rup_2}) yields
\begin{align}
  r_0 \leq   \left\{ \left| \frac{A_{+,l}}{A_{-,l}} -1 \right| + Cl^{-1} E^{\frac{1}{2(m+3)}} \right\}^{\frac{1}{m}} l + C_0 E^{\frac{1}{2(m+3)}}.
  \label{eqn:f19_rup_3}
\end{align}

Note that there is a point $x_3 \in B_{g(0)}(x_2, 3r_0)$ such that $x_3 \in B_{g(\delta_0)} \left(x_1, l- C_0E^{\frac{1}{2(m+3)}}  \right)$.
Otherwise,  let $\alpha$ be a unit speed geodesic (under metric $g(0)$) connecting $x_1$ and $x_2$ such that $\alpha(0)=x_1$, $\alpha(l)=x_2$.
By triangle inequality, we can see that
\begin{align*}
  B_{g(0)} \left( \alpha\left( l-\frac{3}{2} r_0 \right),  \frac{5}{4}r_0 \right) \cap B_{g(\delta_0)} \left(x_1, l- C_0E^{\frac{1}{2(m+3)}}  \right)
  \subset B_{g(0)}(x_2, 3r_0) \cap B_{g(\delta_0)} \left(x_1, l- C_0E^{\frac{1}{2(m+3)}}  \right) =\emptyset,
\end{align*}
which contradicts to the definition of $r_0$.

\begin{claim}
There exists a constant $C=C(m)$ such that
  \begin{align}
    d_{g(\delta_0)}(x_2, x_3) \leq  C A_{+,4l} \max\left\{3C_0 E^{\frac{1}{2(m+3)}}, r_0 \right\}.
  \label{eqn:y12distance}
\end{align}
  \label{clm:f18_ball}
\end{claim}

We first consider the case that $r_0>3C_0 E^{\frac{1}{2(m+3)}}$.

  Under metric $g(0)$, let $\gamma$ be a shortest geodesic connecting $x_2, x_3$. Clearly, $|\gamma|_{g(0)}\leq 3r_0$.
  Under the metric $g(\delta_0)$, $\gamma$ may not be a shortest geodesic. However, it is still a smooth curve.
  Cover the curve $\gamma$ by geodesic balls $B_{g(\delta_0)}(z_i, r_0)$ with the following properties.
  \begin{itemize}
    \item $\displaystyle z_i \in \gamma, \quad \forall \; i \in \left\{ 1, \cdots, N \right\}$;
    \item $\gamma \subset \bigcup_{i=1}^N B_{g(\delta_0)}(z_i, r_0)$;
    \item $B_{g(\delta_0)} \left(z_i, \frac{r_0}{2} \right)$ are disjoint.
  \end{itemize}
  Since  $z_i \in \gamma$, we have $z_i \in B_{g(0)}(x_2, 3r_0) \subset B_{g(0)} \left(x_0, \frac{1}{2} \right)$.
  Note that $r_0>3C_0E^{\frac{1}{2(m+3)}}$, Corollary~\ref{cly:containing} implies that
  \begin{align}
   & B_{g(\delta_0)} \left(z_i, \frac{r_0}{2} \right)
    \subset  B_{g(0)}\left(z_i, \frac{r_0}{2} + C_0E^{\frac{1}{2(m+3)}} \right) \subset B_{g(0)}(z_i, r_0) \subset B_{g(0)}(x_2, 4r_0) \nonumber\\
   &\quad \subset B_{g(0)} \left(x_0, \frac{1}{4}+4r_0 \right) \subset \Omega'=B_{g(0)}\left( x_0, \frac{1}{2} \right)
   \subset \Omega=B_{g(0)}(x_0, 1).
   \label{eqn:containsequence}
  \end{align}
  Note that $B_{g(\delta_0)} \left(z_i, \frac{r_0}{2}\right)$ are disjoint, we obtain
  \begin{align}
    \sum_{i=1}^N \left|B_{g(\delta_0)} \left(z_i, \frac{r_0}{2} \right) \right|_{d\mu_{g(0)}} \leq \left| B_{g(0)}(x_2, 4r_0) \right|_{d\mu_{g(0)}}.
    \label{eqn:f19_vup_1}
  \end{align}
  By the evolution equation of volume form and (\ref{eqn:containsequence}), we have
  \begin{align}
    \left| \sum_{i=1}^N \left|B_{g(\delta_0)} \left(z_i, \frac{r_0}{2} \right) \right|_{d\mu_{g(0)}}
     -\sum_{i=1}^N \left|B_{g(\delta_0)} \left(z_i, \frac{r_0}{2} \right) \right|_{d\mu_{g(\delta_0)}} \right| <E.
     \label{eqn:f19_vup_2}
  \end{align}
  It follows from (\ref{eqn:f19_vup_1}) and (\ref{eqn:f19_vup_2}) that
  \begin{align}
    \sum_{i=1}^N \left|B_{g(\delta_0)} \left(z_i, \frac{r_0}{2} \right) \right|_{d\mu_{g(\delta_0)}} \leq \left| B_{g(0)}(x_2, 4r_0) \right|_{d\mu_{g(0)}} + E.
    \label{eqn:f19_vup_3}
  \end{align}
  Since $r_0<l$,  the definition of $A_{-,l}$, $A_{+,l}$ implies the following inequalities.
  \begin{align}
    &\left|B_{g(\delta_0)} \left(z_i, \frac{r_0}{2} \right) \right|_{d\mu_{g(\delta_0)}} \geq A_{-,l} \left( \frac{r_0}{2} \right)^m,
    \quad \forall \; i \in \left\{ 1, \dots, N \right\};  \label{eqn:f19_vup_4}\\
    &\left| B_{g(0)}(x_1, 4r_0) \right|_{d\mu_{g(0)}} \leq A_{+,4l} (4r_0)^m. \label{eqn:f19_vup_5}
  \end{align}
  Combine (\ref{eqn:f19_vup_3}), (\ref{eqn:f19_vup_4}) and (\ref{eqn:f19_vup_5}), we obtain
  \begin{align}
    \frac{NA_{-,l}}{2^m} r_0^m \leq 4^{m} A_{+,4l}r_0^m + E, \Rightarrow N \leq
    2^m \left( 4^m A_{+,4l} +Er_0^{-m} \right)A_{-,l}^{-1}.
    \label{eqn:f19_nup_1}
  \end{align}
  Recall that $\displaystyle \bigcup_{i=1}^N B_{g_i(\delta_0)}(z_i, r_0)$ is a covering of $\gamma$. Therefore, (\ref{eqn:f19_nup_1}) implies
  \begin{align}
    d_{g(\delta_0)}(x_2, x_3) \leq 2Nr_0 \leq  2^{m+1} \left( 4^m A_{+,4l} + Er_0^{-m} \right) A_{-,l}^{-1}r_0.
    \label{eqn:f19_dup_1}
  \end{align}
  On one hand, by (\ref{eqn:pseduo_noncollapsed}), non-collapsed condition at time $t=\delta_0$ implies that $A_{-,l}^{-1}$ is bounded from above uniformly.
  On the other hand, $A_{+,4l}$ is bounded from below in view of the volume comparison and (\ref{eqn:pseudocon}).
  Therefore, the fact that $r_0>3C_0E^{\frac{1}{2(m+3)}}$ implies   $\displaystyle Er_0^{-m}<(3C_0)^{-m}E^{\frac{m+6}{2(m+3)}}<CA_{+,4l}$.
  Consequently, we can simplify (\ref{eqn:f19_dup_1}) to
\begin{align*}
  d_{g(\delta_0)}(x_2, x_3) \leq  C A_{+,4l}r_0,
\end{align*}
which is the same as (\ref{eqn:y12distance}) under our assumption $r>3C_0 E^{\frac{1}{2(m+3)}}$.
If $r_0 \leq 3C_0 E^{\frac{1}{2(m+3)}}$, we can repeat the previous argument by choosing covering balls of radius $3C_0 E^{\frac{1}{2(m+3)}}$.
The details are similar, so we omit them. \\

Now we can combine Claim~\ref{clm:f19_rupper}  and Claim~\ref{clm:f18_ball} to obtain precise upper bound of $d_{g(\delta_0)}(x_2, x_3)$.
If $r_0 \leq 3C_0E^{\frac{1}{2(m+3)}}$, we obtain
\begin{align}
  d_{g(\delta_0)}(x_2, x_3) \leq CA_{+,4l} E^{\frac{1}{2(m+3)}}< CA_{+, 4l} l^{\frac{m-1}{m}}E^{\frac{1}{2m(m+3)}}
  \label{eqn:f21_r0small}
\end{align}
since $E<< l^{2(m+3)}$. If $r_0>3C_0E^{\frac{1}{2(m+3)}}$, then we have
\begin{align}
  d_{g(\delta_0)}(x_2, x_3)
 &\leq CA_{+,4l}  \left\{ \left\{ \left| \frac{A_{+,l}}{A_{-,l}} -1 \right| + Cl^{-1} E^{\frac{1}{2(m+3)}} \right\}^{\frac{1}{m}} l + C_0 E^{\frac{1}{2(m+3)}} \right\} \nonumber\\
 &\leq CA_{+,4l} \left\{ \left| \frac{A_{+,l}}{A_{-,l}} -1 \right|^{\frac{1}{m}} l + C^{\frac{1}{m}} l^{\frac{m-1}{m}} E^{\frac{1}{2m(m+3)}} +C_0 E^{\frac{1}{2(m+3)}} \right\} \nonumber\\
 &\leq CA_{+,4l} \left\{ \left| \frac{A_{+,l}}{A_{-,l}} -1 \right|^{\frac{1}{m}}  + l^{-\frac{1}{m}} E^{\frac{1}{2m(m+3)}}  \right\} l.
\label{eqn:y12distance_2}
\end{align}
Therefore, triangle inequality yields that
\begin{align}
    d_{g(\delta_0)}(x_1, x_2)
    & \leq d_{g(\delta_0)}(x_1, x_3) + d_{g(\delta_0)}(x_3, x_2) \nonumber\\
    & \leq l-C_0E^{\frac{1}{2(m+3)}} + d_{g(\delta_0)}(x_3, x_2) \nonumber\\
    & < l+CA_{+,4l} \left\{ \left| \frac{A_{+,l}}{A_{-,l}} -1 \right|^{\frac{1}{m}} + l^{-\frac{1}{m}} E^{\frac{1}{2m(m+3)}}  \right\} l.
    \label{eqn:x12distance}
\end{align}
\end{proof}

By refining the estimate in Lemma~\ref{lma:metricequivalent}, we are able to prove that the distance is almost fixed whenever
the normalized scalar curvature is almost zero.

\begin{theorem}
 Suppose $\left\{ (X, x_0, g(t)), 0 \leq t \leq 1 \right\}$ satisfies all the conditions in Theorem~\ref{thm:pseudo-locality_A22}.
 Then for every two points $x_1, x_2 \in \Omega''=B_{g(0)} \left(x_0, \frac{1}{4} \right)$, $l=d_{g(0)}(x_1, x_2)$, we have
  \begin{align}
    l-CE^{\frac{1}{2(m+3)}} \leq d_{g(\delta_0)}(x_1, x_2) \leq l + ClE^{\frac{1}{3m(m+3)}}
  \label{eqn:f20_d2sides}
  \end{align}
  whenever $\displaystyle E= \int_0^{2 \delta_0} \int_{\Omega} |R-m\lambda_0| d\mu dt << l^{6(m+3)}$.
  Here $C=C(m, \delta_0(m))=C(m)$.
 \label{thm:isometry_almost}
\end{theorem}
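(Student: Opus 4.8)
The lower bound in (\ref{eqn:f20_d2sides}) is immediate: it is exactly inequality (\ref{eqn:dcompare_step3}) of Lemma~\ref{lma:dcompare} applied to the pair $x_1,x_2\in\Omega''\subset B_{g(0)}\left(x_0,\tfrac12\right)$ (shrinking the implicit constant in $E\ll l^{6(m+3)}$ if needed so that $E<\delta_0^{m+3}$). So the real content is the upper bound, and the plan is to feed Lemma~\ref{lma:metricequivalent} into a chaining argument carried out at a carefully chosen \emph{small} scale.

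First I would establish an a priori estimate: if $0<l\ll1$ then $\left|\frac{A_{+,l}}{A_{-,l}}-1\right|\le C(m)\,l^2$. Indeed, at $t=0$ the hypothesis $Ric(\cdot,0)\ge-(m-1)\delta_0$ on $\Omega$ together with Bishop--Gromov volume comparison gives $1\le A_{+,l}\le1+Cl^2$; at $t=\delta_0$ the curvature bound $|Rm|(\cdot,\delta_0)\le\delta_0^{-1}$ of Theorem~\ref{thm:pseudo-locality_A22}, the non-collapsing (\ref{eqn:pseduo_noncollapsed}), and the Cheeger--Gromov--Taylor injectivity radius estimate (as already used in (\ref{eqn:injlow})) give bounded geometry on $B_{g(0)}\left(x_0,\tfrac34\right)$, whence $1-Cl^2\le A_{-,l}\le1+Cl^2$. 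Combining the two yields the claim. One cannot do better than $O(l^2)$ here, since the volume ratios of $g(\delta_0)$ genuinely deviate from the Euclidean ones by $O(l^2)$; this is exactly why a direct use of Lemma~\ref{lma:metricequivalent} at scale $l$ is insufficient and one must chain at a smaller scale.

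Next I would set $l^{*}:=E^{\frac1{6(m+3)}}$; since $E\ll l^{6(m+3)}$ this forces $l^{*}<l$ and $l^{*}\ll1$. Choose a unit-speed $g(0)$-minimizing geodesic $\gamma:[0,l]\to X$ from $x_1$ to $x_2$, put $N=\lceil l/l^{*}\rceil$, $l_i:=l/N\in\left[\tfrac{l^{*}}2,l^{*}\right]$, and $p_j:=\gamma(jl/N)$, so $p_0=x_1$, $p_N=x_2$ and $\sum_{i=1}^N d_{g(0)}(p_{i-1},p_i)=l$. Every $p_j$ lies in $B_{g(0)}(x_1,l)\subset B_{g(0)}\left(x_0,\tfrac34\right)$, so Lemma~\ref{lma:metricequivalent} applies to each consecutive pair $(p_{i-1},p_i)$ at scale $l_i$ (its proof uses only the pseudo-locality conclusions (\ref{eqn:pseduo_noncollapsed})--(\ref{eqn:pseudo}) on $B_{g(0)}\left(x_0,\tfrac34\right)$, which contains all the $p_j$; the required $E\ll l_i^{2(m+3)}$ reduces to $E^{2/3}\ll1$). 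Using $A_{+,4l_i}\le2$ and the a priori bound above,
\begin{align*}
  d_{g(\delta_0)}(p_{i-1},p_i)\le l_i+C\left(l_i^{\frac2m}+l_i^{-\frac1m}E^{\frac1{2m(m+3)}}\right)l_i\le l_i\left(1+C\,E^{\frac1{3m(m+3)}}\right),
\end{align*}
the last step because $l_i$ is comparable to $l^{*}=E^{\frac1{6(m+3)}}$, which makes both $l_i^{2/m}$ and $l_i^{-1/m}E^{1/(2m(m+3))}$ comparable to $E^{1/(3m(m+3))}$. Summing over $i$ and applying the triangle inequality for $d_{g(\delta_0)}$,
\begin{align*}
  d_{g(\delta_0)}(x_1,x_2)\le\sum_{i=1}^N d_{g(\delta_0)}(p_{i-1},p_i)\le\left(1+C\,E^{\frac1{3m(m+3)}}\right)\sum_{i=1}^N l_i=l\left(1+C\,E^{\frac1{3m(m+3)}}\right),
\end{align*}
which is the desired upper bound.

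The heart of the matter is the choice $l^{*}=E^{1/(6(m+3))}$: it is precisely the scale that balances the ``geometric'' error $l^{2/m}$ (produced, through the $\tfrac1m$-th power in Lemma~\ref{lma:metricequivalent}, by the unimprovable bound $|A_{+,l}/A_{-,l}-1|\le Cl^2$) against the ``$L^1$-scalar'' error $l^{-1/m}E^{1/(2m(m+3))}$, and at that scale a \emph{single} application of Lemma~\ref{lma:metricequivalent} already yields relative distortion $CE^{1/(3m(m+3))}$, which the chaining then transports unchanged from scale $l^{*}$ to scale $l$. The one genuinely technical point I expect to have to be careful about is making Lemmas~\ref{lma:dcompare}--\ref{lma:metricequivalent} available for pairs lying anywhere in $B_{g(0)}\left(x_0,\tfrac34\right)$ and not merely in $\Omega''$; this should be routine, since those proofs use only the pseudo-locality bounds on $B_{g(0)}\left(x_0,\tfrac34\right)$, the non-collapsing, and the elementary Poincar\'e/Sobolev/volume-ratio estimates proved earlier, all of which persist there once the various radii are taken with a little slack.
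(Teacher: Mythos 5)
Your proposal is correct and follows essentially the same route as the paper: both arguments obtain the lower bound from Lemma~\ref{lma:dcompare}, derive $A_{+,r}\le1+Cr^2$ and $A_{-,r}\ge1-Cr^2$ from Bishop--Gromov at $t=0$ and the pseudo-locality curvature and injectivity bounds at $t=\delta_0$, plug these into Lemma~\ref{lma:metricequivalent}, and then chain along the $g(0)$-geodesic at step size $\sim E^{\frac{1}{6(m+3)}}$ (the paper writes this as $N\sim lE^{-\frac{1}{6(m+3)}}$) to balance the two error terms and obtain relative distortion $CE^{\frac{1}{3m(m+3)}}$. Your extra remark about making Lemma~\ref{lma:metricequivalent} available on all of $B_{g(0)}\left(x_0,\tfrac34\right)$ rather than just $\Omega''$ is a fair technical point that the paper passes over silently, but it is routine and does not constitute a different approach.
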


\begin{proof}
   The first inequality of (\ref{eqn:f20_d2sides}) is the same as the one in (\ref{eqn:metricequivalent}).
   So we only need to show the second inequality of (\ref{eqn:f20_d2sides}).

    At time $t=\delta_0$, $|Rm|$ is uniformly bounded, injectivity radius is uniformly bounded from below.  Therefore, Rauch comparison theorem
    can be applied to obtain a lower bound of $A_{-,r}$.    At time $t=0$, Ricci curvature is bounded from below. So the Bishop volume
    comparison theorem implies an upper bound of $A_{+,r}$.  In short, we have
   \begin{align*}
     A_{+,r} \leq 1+ Cr^2, \quad A_{-,r} \geq 1-Cr^2,
   \end{align*}
   whenever $r<\xi=\xi(m, \kappa(m), \delta_0(m))=\xi(m)$.  It follows that
   \begin{align}
     CA_{+,4r} \left\{ \left| \frac{A_{+,r}}{A_{-,r}}-1 \right|^{\frac{1}{m}} + r^{-\frac{1}{m}}E^{\frac{1}{2m(m+3)}}  \right\}
     \leq C\left\{ r^{\frac{2}{m}} + r^{-\frac{1}{m}}E^{\frac{1}{2m(m+3)}}  \right\}.
   \label{eqn:f20_dup_r}
   \end{align}
   By (\ref{eqn:metricequivalent}) and (\ref{eqn:f20_dup_r}),  we have
   \begin{align}
     d_{g(\delta_0)}(y_1, y_2) r^{-1} \leq 1+ C\left\{ r^{\frac{2}{m}} + r^{-\frac{1}{m}}E^{\frac{1}{2m(m+3)}}  \right\},
   \label{eqn:f20_dup}
   \end{align}
   whenever $y_1, y_2 \in B_{g(0)} \left(x_0, \frac{1}{4} \right)$ and $d_{g(0)}(y_1, y_2)=r<\xi$.

   Fix a big integer number $N>\xi^{-1} l$.   Let $\gamma$ be a unit speed shortest geodesic connecting  $x_1, x_2$ such that $\gamma(0)=x_1$, $\gamma(l)=x_2$.
   Define $\displaystyle z_i=\gamma\left( N^{-1}il\right)$. Clearly, $z_0=x_1, \; z_N=x_2$.
   Since $\displaystyle d_{g(\delta_0)}(z_i, z_{i+1})=N^{-1}l<\xi$ for every $i=0, \cdots, N-1$, it follows from (\ref{eqn:f20_dup}) that
   \begin{align*}
     \frac{d_{g(\delta_0)} \left(z_i, z_{i+1} \right)}{N^{-1}l}
     \leq 1 + C\left\{ N^{-\frac{2}{m}} l^{\frac{2}{m}} + N^{\frac{1}{m}} l^{-\frac{1}{m}} E^{\frac{1}{2m(m+3)}}  \right\}.
   \end{align*}
   In view of triangle inequality, we obtain
   \begin{align*}
     \frac{d_{g(\delta_0)}(x_1, x_2)}{N^{-1}l}
     \leq \frac{ \sum_{i=0}^N d_{g(\delta_0)} \left(z_i, z_{i+1} \right)}{N^{-1}l}
     \leq  N\left\{1 + C\left\{ N^{-\frac{2}{m}} l^{\frac{2}{m}} + N^{\frac{1}{m}} l^{-\frac{1}{m}} E^{\frac{1}{2m(m+3)}}  \right\} \right\},
   \end{align*}
   which in turn implies that
   \begin{align}
     d_{g(\delta_0)}(x_1, x_2) l^{-1} \leq l + C\left\{ N^{-\frac{2}{m}} l^{\frac{2}{m}} + N^{\frac{1}{m}} l^{-\frac{1}{m}} E^{\frac{1}{2m(m+3)}}  \right\}.
	\label{eqn:f20_dup_N}
   \end{align}
   Let $N \sim lE^{-\frac{1}{6(m+3)}} > l\xi^{-1}$. Then (\ref{eqn:f20_dup_N}) yields that
   $\displaystyle d_{g(\delta_0)}(x_1, x_2) l^{-1} \leq 1 + CE^{\frac{1}{3m(m+3)}}$.
\end{proof}

Based on Theorem~\ref{thm:isometry_almost}, we are ready to prove a gap theorem.
\begin{theorem}[Gap theorem]
  There exists a big constant $L_0=L_0(m)$ such that the following properties hold.

  Suppose  $\left\{ (X, x_0, g(t)), 0 \leq t \leq 1 \right\}$ satisfies the same conditions as in Theorem~\ref{thm:pseudo-locality_A22}.
  Then for every $0<r<\frac{1}{4}$, we have
  \begin{align}
    r^{-1}d_{GH}\left(  \left( B_{g(0)}(x_0, r), g(0) \right), \left( B_{g(\delta_0)}(x_0, r), g(\delta_0) \right) \right)
    < L_0 r^{-1}E^{\frac{1}{3m(m+3)}},
    \label{eqnin:gdtwotimes}
  \end{align}
  whenever $\displaystyle E= \int_0^{2\delta_0} \int_{B_{g(0)}(x_0, 1)} |R-m\lambda_0| d\mu dt << r^{6(m+3)}$. Moreover, we have
  \begin{align}
    r^{-1}d_{GH}\left(  \left( B_{g(0)}(x_0, r), g(0) \right), \left( B(0, r), g_{\E} \right) \right)
    < L_0 r^2,
    \label{eqnin:gdeuc}
  \end{align}
  whenever $E<<r^{9m(m+3)}$, $r<<1$.  Here $B(0, r)$ is the ball with radius $r$ in the Euclidean space $\R^m$.
  \label{thm:Gap_A22}
\end{theorem}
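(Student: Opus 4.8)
The plan is to prove the two displayed inequalities in turn. For (\ref{eqnin:gdtwotimes}) the idea is to upgrade the pointwise distance estimate of Theorem~\ref{thm:isometry_almost} to a Gromov--Hausdorff bound by using the identity map as an almost-isometry between the two metric balls; for (\ref{eqnin:gdeuc}) one then chains this estimate through the time-$\delta_0$ slice, where Theorem~\ref{thm:pseudo-locality_A22} makes $g(\delta_0)$ a metric of bounded geometry near $x_0$, and a small bounded-curvature ball is cubically close to a Euclidean ball.

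For (\ref{eqnin:gdtwotimes}), I would view the identity as a map between $\left( B_{g(0)}(x_0,r), g(0) \right)$ and $\left( B_{g(\delta_0)}(x_0,r), g(\delta_0) \right)$. First record that the two balls agree up to thin collars: Corollary~\ref{cly:containing} gives $B_{g(\delta_0)}\left( x_0, r-CE^{\frac{1}{2(m+3)}} \right)\subset B_{g(0)}(x_0,r)$, so $B_{g(0)}(x_0,r)$ is $CE^{\frac{1}{2(m+3)}}$-dense in $\left( B_{g(\delta_0)}(x_0,r), d_{g(\delta_0)} \right)$, while the upper estimate in Theorem~\ref{thm:isometry_almost} gives $B_{g(0)}(x_0,r)\subset B_{g(\delta_0)}\left( x_0, r+CE^{\frac{1}{6(m+3)}} \right)$. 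Then set $\epsilon_0:=\left( c(m)^{-1}E \right)^{\frac{1}{6(m+3)}}$ and fix a maximal $\epsilon_0$-separated subset $\{p_i\}\subset B_{g(0)}(x_0,r)$, which is automatically an $\epsilon_0$-net. For net points $d_{g(0)}(p_i,p_j)\ge\epsilon_0$, hence $E\ll d_{g(0)}(p_i,p_j)^{6(m+3)}$ and Theorem~\ref{thm:isometry_almost} yields $\left| d_{g(\delta_0)}(p_i,p_j)-d_{g(0)}(p_i,p_j) \right|\le CE^{\frac{1}{3m(m+3)}}$ (the lower-bound term $CE^{\frac{1}{2(m+3)}}$ being dominated, and $d_{g(0)}(p_i,p_j)<2r<1$). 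For a pair of points at $g(0)$-distance $<\epsilon_0$ --- which is exactly what one meets both when matching an arbitrary point to its nearest net point and when handling points near $x_0$ --- a two-step triangle inequality through an auxiliary point at $g(0)$-distance in $\left[\epsilon_0,2\epsilon_0\right]$ of each, to which Theorem~\ref{thm:isometry_almost} does apply, shows their $g(\delta_0)$-distance is still $O(\epsilon_0)$. Assembling these facts, $\{p_i\}$ is a $CE^{\frac{1}{3m(m+3)}}$-net of $\left( B_{g(\delta_0)}(x_0,r), d_{g(\delta_0)} \right)$ and the identity on $\{p_i\}$ has distortion $\le CE^{\frac{1}{3m(m+3)}}$; splitting $d_{GH}$ through $\left( \{p_i\}, d_{g(0)} \right)$ and $\left( \{p_i\}, d_{g(\delta_0)} \right)$ gives $d_{GH}\le C(m)E^{\frac{1}{3m(m+3)}}$, where I used $m\ge3$ so that $\epsilon_0\le C(m)E^{\frac{1}{3m(m+3)}}$ and $E^{\frac{1}{2(m+3)}}\le E^{\frac{1}{3m(m+3)}}$ for $E<1$. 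Taking $L_0=L_0(m)$ large and cancelling the cosmetic $r^{-1}$ from both sides of (\ref{eqnin:gdtwotimes}) finishes this part; note the hypothesis $E\ll r^{6(m+3)}$ is precisely what forces $\epsilon_0<r$, so the net lives well below scale $r$.

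For (\ref{eqnin:gdeuc}), I would insert $\left( B_{g(\delta_0)}(x_0,r), g(\delta_0) \right)$ between the two spaces. Since $E\ll r^{9m(m+3)}$ forces $E\ll r^{6(m+3)}$ (as $r<1$), the part just proved gives $d_{GH}\left( \left( B_{g(0)}(x_0,r),g(0) \right), \left( B_{g(\delta_0)}(x_0,r),g(\delta_0) \right) \right)<L_0E^{\frac{1}{3m(m+3)}}$, and $E\ll r^{9m(m+3)}$ makes this $\ll r^3$. For the other leg, Theorem~\ref{thm:pseudo-locality_A22} at $t=\delta_0$ gives $|Rm|(\cdot,\delta_0)\le\delta_0^{-1}=:\Lambda(m)$ on $B_{g(0)}\left( x_0,\frac34 \right)\supset B_{g(\delta_0)}(x_0,r)$ together with the non-collapsing (\ref{eqn:pseduo_noncollapsed}), hence $inj(x_0,\delta_0)\ge i_0(m)>0$ by~\cite{CGT}. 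For $r\ll1$ this $r$ lies below the injectivity and conjugate radii at $x_0$, so $\exp_{x_0}$ at time $\delta_0$ is a diffeomorphism of $\left( B(0,r),g_{\E} \right)$ onto $\left( B_{g(\delta_0)}(x_0,r),g(\delta_0) \right)$ with $\exp_{x_0}^{*}g(\delta_0)=g_{\E}+O\left( \Lambda(m)|x|^2 \right)$ on $B(0,r)$; comparing lengths of curves of length $\le2r$ gives $d_{GH}\left( \left( B_{g(\delta_0)}(x_0,r),g(\delta_0) \right), \left( B(0,r),g_{\E} \right) \right)\le C(m)r^3$. Summing the two legs, $d_{GH}\left( \left( B_{g(0)}(x_0,r),g(0) \right), \left( B(0,r),g_{\E} \right) \right)\le C(m)r^3+L_0E^{\frac{1}{3m(m+3)}}\le\left( C(m)+1 \right)r^3$, which is (\ref{eqnin:gdeuc}) after dividing by $r$ and enlarging $L_0$.

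The hard part is the first step: Theorem~\ref{thm:isometry_almost} already carries all the analytic weight (pseudo-locality plus the $L^1$-smallness of $R-m\lambda_0$), so what remains is the metric-geometry bookkeeping of converting a pointwise almost-isometry into a Gromov--Hausdorff estimate. The two genuine points of care are the near-surjectivity of the identity along $\partial B(x_0,r)$, supplied by Corollary~\ref{cly:containing}, and the breakdown of Theorem~\ref{thm:isometry_almost} for pairs of points at distance below a scale of order $E^{\frac{1}{6(m+3)}}$, where its hypothesis $E\ll l^{6(m+3)}$ fails; the net argument confines that defect to a negligible scale. Everything else, including the deduction of (\ref{eqnin:gdeuc}), is routine, the only thing to keep straight being that the exponents $\frac{1}{2(m+3)}$, $\frac{1}{3m(m+3)}$ and $\frac{1}{6(m+3)}$ are mutually comparable up to powers of $r$, which is precisely why the smallness thresholds are stated as $r^{6(m+3)}$ and $r^{9m(m+3)}$.
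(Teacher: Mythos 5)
Your proposal is correct and follows essentially the same route as the paper: apply Theorem~\ref{thm:isometry_almost} to show the identity is a $CE^{\frac{1}{3m(m+3)}}$-approximation between the two balls, use Corollary~\ref{cly:containing} to control the two balls as sets, then for the Euclidean comparison chain through the time-$\delta_0$ slice where pseudo-locality gives bounded geometry and the exponential map provides a $Cr^3$-accurate correspondence. The one place you are more careful than the paper is the net argument handling pairs at $g(0)$-distance below the threshold scale $\sim E^{\frac{1}{6(m+3)}}$, where the hypothesis of Theorem~\ref{thm:isometry_almost} fails; the paper's write-up leaves this sub-scale issue implicit, and your auxiliary-point trick is a clean way to close it without changing the structure or the exponents.
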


\begin{proof}
By (\ref{eqn:f20_d2sides}), we have
\begin{align}
  \left| d_{g(0)}(x_1, x_2) - d_{g(\delta_0)}(x_1, x_2) \right| < C \max \left\{ E^{\frac{1}{3m(m+3)}}, E^{\frac{1}{2(m+3)}}\right\}<CE^{\frac{1}{3m(m+3)}}
  \label{eqn:sbdm}
\end{align}
for every two points $x_1, x_2 \in B_{g(0)} \left(x_0, \frac{1}{4} \right)$ satisfying $d_{g(0)}(x_1, x_2) >> E^{\frac{1}{6(m+3)}}$.    In particular, if
$d_{g(0)}(x_1, x_2)$ is comparable with $E^{\frac{1}{3m(m+3)}} >> E^{\frac{1}{6(m+3)}}$, then (\ref{eqn:sbdm}) holds.
This means that the identity map is a $CE^{\frac{1}{3m(m+3)}}$-approximation map from $\left( B_{g(0)}\left(x_0, r \right), g(0) \right)$ to
$\left( B_{g(0)}\left(x_0, r \right), g(\delta_0) \right)$.   Therefore, we have
\begin{align}
  d_{GH} \left( \left( B_{g(0)}\left(x_0, r \right), g(0) \right),  \left( B_{g(0)} \left(x_0, r \right),  g(\delta_0) \right) \right) < CE^{\frac{1}{3m(m+3)}}.
\label{eqn:f21_ghapp}
\end{align}
On the other hand, (\ref{eqn:f20_d2sides}) implies that
\begin{align*}
  B_{g(\delta_0)} \left(x_0, r-CE^{\frac{1}{2(m+3)}} \right)
  \subset B_{g(0)} \left(x_0, r \right) \subset B_{g(\delta_0)} \left(x_0, r+CE^{\frac{1}{3m(m+3)}} \right),
\end{align*}
which in turn yields that
\begin{align}
  d_{GH} \left( \left( B_{g(0)} \left(x_0, r \right), g(\delta_0) \right),  \left( B_{g(\delta_0)} \left(x_0, r \right),  g(\delta_0) \right)  \right)
  < CE^{\frac{1}{3m(m+3)}}
  \label{eqn:dbsm}
\end{align}
by the definition of Gromov-Hausdorff distance. Combine (\ref{eqn:f21_ghapp}) and (\ref{eqn:dbsm}), we obtain
\begin{align*}
   d_{GH} \left( \left( B_{g(0)} \left(x_0, r \right), g(0) \right),  \left( B_{g(\delta_0)} \left(x_0, r \right),  g(\delta_0) \right)  \right)
  < CE^{\frac{1}{3m(m+3)}},
\end{align*}
whose scaling-invariant form on the left hand side is (\ref{eqnin:gdtwotimes}).

At time $t=\delta_0$,  around $x_0$, $|Rm|$ is uniformly bounded, injectivity radius is uniformly bounded from below. Using exponential map,
one can construct approximation map from Euclidean ball to geodesic ball.  It is not hard to see that
\begin{align}
  r^{-1} d_{GH} \left( \left( B_{g(\delta_0)} \left(x_0, r \right),  g(\delta_0) \right),  \left( B(0, r), g_{\E} \right) \right) < Cr^2
\label{eqn:f20_deuc}
\end{align}
whenever $r$ is very small. It follows from (\ref{eqnin:gdtwotimes}) and (\ref{eqnin:gdeuc}) that
\begin{align*}
   r^{-1}d_{GH} \left( \left( B_{g(0)} \left(x_0, r \right), g(0) \right),  \left( B(0, r), g_{\E} \right)  \right)
   < C \left \{ r^2 + r^{-1}E^{\frac{1}{3m(m+3)}} \right\} < Cr^2
\end{align*}
whenever $E< r^{9m(m+3)}$.  Let $L_0$ be the maximum of all the  $C$'s that appear in this proof, we obtain Theorem~\ref{thm:Gap_A22}.
\end{proof}

\section{Structure of limit space}
This section is devoted to prove the structure theorems, Theorem~\ref{thmin:goodlimit} and Theorem~\ref{thmin:kgoodlimit}, respectively.

\subsection{Riemannian case}

Suppose $\left( X_i, x_i, g_i \right)$ is a sequence of almost Einstein manifolds.  Let $(\bar{X}, \bar{x}, \bar{g})$ be the limit space of $\left( X_i, x_i, g_i \right)$,
$\bar{\lambda}$ be the limit of $\lambda_i$.  In this section, we shall use the estimates developed in previous sections to show the structure of $\bar{X}$.

A tangent space $\left( \hat{Y}, \hat{y}, \hat{g} \right)$ at a point $y \in \bar{X}$ is the pointed-Gromov-Hausdorff limit of
$\left( \bar{X}, y, \epsilon_j^{-2} \bar{g} \right)$ for some sequence $\epsilon_j \to 0$.  A point $y \in \bar{X}$ is called regular if every tangent cone at $y$ is isometric to the Euclidean space $\left( \R^m, 0, g_{\E} \right)$.
A point $y \in X$ is called singular if it is not regular, i.e., at $y$, there exists a tangent space $\left( \hat{Y}, \hat{y}, \hat{g} \right)$ which is not
isometric to the Euclidean space.  By the fundamental work in~\cite{CC1}, one sees that every tangent space is a metric cone. Moreover, a tangent cone is Gromov-Hausdorff close to
the Euclidean space if and only if the volume of the standard unit ball in the tangent cone is close to $\omega_m$, the volume of the unit ball in $\R^m$.
Under the non-collapsed and Ricci lower bound condition, the Hausdorff measure converges whenever the Gromov-Hausdorff convergence happens.
This inspires us to define the function $\mathcal{U}$ on $\bar{X} \times (0,\infty)$ as follows.  For every point $y \in \bar{X}$, define
$\displaystyle \mathcal{U}(y,r) \triangleq \omega_m^{-1} r^{-m} |B(y,r)|$. Since the space $\bar{X}$ inherits the Bishop-Gromov volume comparison property from the limit process, we see that
$\displaystyle \lim_{r \to 0} \mathcal{U}(y,r)$ is a well defined positive number, which we denote by $\mathcal{U}(y)$.   Clearly, a point $y$ is singular if and only if
$\mathcal{U}(y)<1$.   However, by using the special property of almost Einstein limit, this property can be improved.
\begin{proposition}
  $y \in \bar{X}$ is a singular point if and only if  $\displaystyle \mathcal{U}(y) \leq \left( 1-\frac{\delta_0}{2} \right)$.
\label{prn:A6_1}
\end{proposition}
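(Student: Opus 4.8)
\emph{Overview and the easy direction.} The plan is to prove the two implications separately; the substantial one, ``$y\in\mathcal{S}\Rightarrow\mathcal{U}(y)\le 1-\frac{\delta_0}{2}$'', I will establish in the contrapositive form ``$\mathcal{U}(y)>1-\frac{\delta_0}{2}\Rightarrow y\in\mathcal{R}$'', so that the content is a gap statement ruling out $\mathcal{U}(y)\in\big(1-\frac{\delta_0}{2},1\big)$. For the easy direction, suppose $\mathcal{U}(y)\le 1-\frac{\delta_0}{2}<1$. If $(\hat Y,\hat y,\hat g)$ is any tangent cone at $y$, obtained as a limit of $(\bar X,y,\epsilon_j^{-2}\bar g)$, then $B_{\epsilon_j^{-2}\bar g}(y,1)=B_{\bar g}(y,\epsilon_j)$ has $\epsilon_j^{-2}\bar g$-volume $\omega_m\,\mathcal{U}(y,\epsilon_j)\to\omega_m\,\mathcal{U}(y)$, so by continuity of the Hausdorff measure under Gromov--Hausdorff convergence (valid here by the non-collapsing and lower Ricci bound) one gets $|B_{\hat g}(\hat y,1)|=\omega_m\,\mathcal{U}(y)<\omega_m$; hence $\hat Y$ is not isometric to $(\R^m,0,g_{\E})$, i.e.\ $y\in\mathcal{S}$.

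\emph{Rescaling for the main direction.} Assume $\mathcal{U}(y)>1-\frac{\delta_0}{2}$, where $\delta_0=\delta_0(m)$ is the constant of Theorem~\ref{thm:pseudo-locality_A22}. Since there is no Ricci flow on $\bar X$, I work upstream: choose $x_i'\in X_i$ with $x_i'\to y$. Because $\mathcal{U}(y,r)\to\mathcal{U}(y)$ as $r\to0$, and because metric-ball volumes converge along $(X_i,x_i,g_i)\to(\bar X,\bar x,\bar g)$, I may fix once and for all a scale $r_0$ with $r_0<1$ and $r_0^2\le(m-1)\delta_0$ such that $\omega_m^{-1}r_0^{-m}|B_{g_i}(x_i',r_0)|_{g_i}\ge 1-\delta_0$ for all large $i$. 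Put $\hat g_i(t)=r_0^{-2}g_i(r_0^2 t)$; by scale invariance of $Ric$ this is a normalized Ricci flow $\partial_t\hat g_i=-Ric+\hat\lambda_i\hat g_i$ on $[0,1]$ with $\hat\lambda_i=r_0^2\lambda_i$, $|\hat\lambda_i|\le1$, and, writing $\Omega_i=B_{\hat g_i(0)}(x_i',1)$, it satisfies $Ric(\hat g_i)(\cdot,0)\ge -r_0^2\hat g_i\ge-(m-1)\delta_0\hat g_i$ on $\Omega_i$ and $|\Omega_i|_{\hat g_i(0)}\ge(1-\delta_0)\omega_m$. Thus Theorem~\ref{thm:pseudo-locality_A22} and Theorem~\ref{thm:Gap_A22} apply to $\hat g_i$ based at $x_i'$.

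\emph{Applying the gap theorem and passing to the limit.} The quantity controlling Theorem~\ref{thm:Gap_A22} for $\hat g_i$ is $\hat E_i=\int_0^{2\delta_0}\int_{\Omega_i}|\hat R-m\hat\lambda_i|\,d\hat\mu\,dt=r_0^{-m}\int_0^{2\delta_0 r_0^2}\int_{B_{g_i(0)}(x_i',r_0)}|R-m\lambda_i|\,d\mu\,dt\le r_0^{-m}E_i$, which tends to $0$ as $i\to\infty$ since $r_0$ is fixed. Fix $\rho<<1$; then for all large $i$ we have $\hat E_i<<\rho^{9m(m+3)}$, so~(\ref{eqnin:gdeuc}) gives $\rho^{-1}d_{GH}\big((B_{\hat g_i(0)}(x_i',\rho),\hat g_i(0)),(B(0,\rho),g_{\E})\big)<L_0\rho^2$. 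Rescaling by $r_0$ and writing $s=r_0\rho$, this becomes $s^{-1}d_{GH}\big((B_{g_i(0)}(x_i',s),g_i(0)),(B(0,s),g_{\E})\big)<L_0 r_0^{-2}s^2$; letting $i\to\infty$, using $x_i'\to y$ and convergence of the $s$-balls, yields $s^{-1}d_{GH}\big((B_{\bar g}(y,s),\bar g),(B(0,s),g_{\E})\big)\le L_0 r_0^{-2}s^2$. Since $r_0$ is fixed while $\rho$, hence $s=r_0\rho$, ranges over all sufficiently small positive numbers, the right side tends to $0$ as $s\to0$; therefore every tangent cone at $y$, being a metric cone whose unit ball is isometric to the Euclidean unit ball, equals $(\R^m,0,g_{\E})$, i.e.\ $y\in\mathcal{R}$. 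This proves the proposition.

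\emph{Main obstacle.} All of the hard analysis is already packaged in Theorem~\ref{thm:pseudo-locality_A22} and Theorem~\ref{thm:Gap_A22}; the delicate point is the two-scale bookkeeping. One must apply the flow estimates at the \emph{fixed} scale $r_0$, at which both the rescaled Ricci lower bound and the almost-Euclidean volume bound persist and at which $\hat E_i$ acquires only the harmless constant factor $r_0^{-m}$, and then extract the Euclidean tangent cone by sending the \emph{auxiliary} scale $\rho$ to $0$ with $r_0$ frozen --- all while carefully invoking the volume-continuity and ball-convergence properties of the limit $(X_i,x_i,g_i)\to(\bar X,\bar x,\bar g)$.
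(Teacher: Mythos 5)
Your proof is correct and follows essentially the same approach as the paper's: rescale so that the almost-Euclidean volume ratio and the rescaled Ricci lower bound meet the hypotheses of Theorem~\ref{thm:pseudo-locality_A22}, then invoke the gap theorem (Theorem~\ref{thm:Gap_A22}, inequality~(\ref{eqnin:gdeuc})) to conclude that the tangent cones at $y$ are Euclidean. The only difference is organizational: the paper rescales by $\rho_j^{-2}$ with $\rho_j\to 0$ and, via a diagonal subsequence, realizes a tangent cone $\hat Y$ as a new almost Einstein limit before applying the gap theorem and cone rigidity, whereas you rescale once by a fixed $r_0^{-2}$ and vary the auxiliary sub-scale $\rho$, which avoids the diagonal argument and reaches the same conclusion directly.
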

\begin{proof}
 It suffices to show that $y$ is regular whenever $\displaystyle \mathcal{U}(y)> \left( 1-\frac{\delta_0}{2} \right)$.

 Suppose $\displaystyle \mathcal{U}(y)> \left( 1-\frac{\delta_0}{2} \right)$. By definition of $\mathcal{U}(y)$, there exists a sequence of $\rho_j \to 0$ such that
 $$\omega_m^{-1}\rho_j^{-m}|B(y, \rho_j)| > \left(1- \frac{1}{2}\delta_0 \right).$$
 Denote the pointed-Gromov-Hausdorff limit of $\left( \bar{X}, y, \rho_j^{-2} \bar{g} \right)$ by $\left( \hat{Y}, \hat{y}, \hat{g}\right)$, which is a tangent cone of $\bar{X}$ at the point $y$. By a careful choice of diagonal subsequence if necessary, we can assume
  $\left( \hat{Y}, \hat{y}, \hat{g} \right)$ as the pointed-Gromov-Hausdorff limit of $\left( X_{i_j}, y_{i_j}, \rho_j^{-2} g_{i_j} \right)$, which is a new sequence of almost Einstein manifolds.  For brevity, we drop some subindexes and look  $\left( \hat{Y}, \hat{y}, \hat{g} \right)$ as the almost Einstein limit of $\left( X_{j}, y_{j}, h_{j} \right)$, where $h_j = \rho_j^{-2}g_{i_j}$.  By volume continuity, we have
  \begin{align*}
     \rho_j^{-m}|B(y, \rho_j)| > \omega_m \left(1- \frac{1}{2}\delta_0 \right), \Rightarrow
     |B(y_j, 1)|_{d\mu_{h_j}} > \left( 1-\delta_0 \right) \omega_m.
  \end{align*}
  Clearly, $Ric_{h_j} \geq -(m-1)\rho_j^{2}$ on $X_j$.  Therefore, Theorem~\ref{thm:Gap_A22} applies.  Fix an arbitrary small $r>0$, by inequality (\ref{eqnin:gdeuc}), we see that
  \begin{align*}
    r^{-1} d_{GH} \left( \left( B_{h_j}(y_j, r), h_j \right), \left( B(0,r), g_{\E} \right) \right)  <Cr^{2}
    \Rightarrow
    r^{-1} d_{GH} \left( \left( B_{\hat{g}}(\hat{y}, r), \hat{g}\right), \left( B(0,r), g_{\E} \right) \right)  \leq Cr^{2}.
  \end{align*}
  Consequently, every tangent space of $\hat{Y}$ at $\hat{y}$ is the Euclidean space $\R^m$. On the other hand, we already know $\hat{Y}$ is a metric cone
  with vertex $\hat{y}$.  These two conditions force that $\hat{Y}$ is isometric to $\R^m$. Henceforth, $y$ is a regular point.
\end{proof}

 By some routine argument, the following Corollary is obvious now.

\begin{corollary}
  There exists a constant $\bar{\epsilon}=\bar{\epsilon}(m)>0$ with the following property.

  Suppose $y \in \bar{X}$, $\left(\hat{Y}, \hat{y}, \hat{g}\right)$ is a tangent space of $\bar{X}$ at $y$,
  $B(0,1)$ is the unit ball in the Euclidean space $\R^m$.  Then $\hat{Y}$ is isometric to $\R^{m}$ if and only if
  \begin{align}
    d_{GH} \left( \left(B_{\hat{g}}(\hat{y}, 1), \hat{g} \right), \left( B(0,1), g_{\E} \right) \right)<\bar{\epsilon}.
    \label{eqn:A4_eclose}
  \end{align}
  \label{cly:A7_1}
\end{corollary}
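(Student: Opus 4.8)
The plan is to deduce this from Proposition~\ref{prn:A6_1}, using that a tangent space is a metric cone. The ``only if'' direction is immediate: if $\hat{Y}$ is isometric to $\R^m$, then $\left( B_{\hat{g}}(\hat{y},1), \hat{g} \right)$ is isometric to $\left( B(0,1), g_{\E} \right)$, so their Gromov-Hausdorff distance is $0<\bar{\epsilon}$.

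For the ``if'' direction I would argue by contradiction. If no such $\bar{\epsilon}(m)$ existed, there would be a sequence of pointed spaces $\left( \hat{Y}_k, \hat{y}_k, \hat{g}_k \right)$, each a tangent space at some point of a Gromov-Hausdorff limit of some sequence of almost Einstein manifolds, with
\[
  d_{GH}\left( \left( B_{\hat{g}_k}(\hat{y}_k,1), \hat{g}_k \right), \left( B(0,1), g_{\E} \right) \right) \to 0,
\]
but with no $\hat{Y}_k$ isometric to $\R^m$. By~\cite{CC1}, each $\hat{Y}_k$ is a metric cone with vertex $\hat{y}_k$; and, exactly as in the proof of Proposition~\ref{prn:A6_1}, after a diagonal subsequence each $\hat{Y}_k$ is itself realized as an almost Einstein limit, so Proposition~\ref{prn:A6_1} may be applied at $\hat{y}_k \in \hat{Y}_k$. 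Since $\hat{Y}_k$ is a metric cone with vertex $\hat{y}_k$, it is isometric to each of its rescalings $\left( \hat{Y}_k, \hat{y}_k, \epsilon^{-2}\hat{g}_k \right)$, hence it is one of its own tangent spaces at $\hat{y}_k$; as $\hat{Y}_k$ is not isometric to $\R^m$, the vertex $\hat{y}_k$ is a singular point, so Proposition~\ref{prn:A6_1} gives $\mathcal{U}(\hat{y}_k) \leq 1-\frac{\delta_0}{2}$. On the other hand, for a metric cone the volume ratio $\omega_m^{-1}r^{-m}|B(\cdot,r)|$ at the vertex does not depend on $r$, so $\mathcal{U}(\hat{y}_k) = \omega_m^{-1}\mathcal{H}^m\left( B_{\hat{g}_k}(\hat{y}_k,1) \right)$; and by volume continuity under lower Ricci bounds (cf.~\cite{Colding_volume},~\cite{CC1}), $\mathcal{H}^m\left( B_{\hat{g}_k}(\hat{y}_k,1) \right) \to \omega_m$ as the Gromov-Hausdorff distances above tend to $0$. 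This contradicts $\mathcal{U}(\hat{y}_k) \leq 1-\frac{\delta_0}{2}$, so $\bar{\epsilon}(m)>0$ exists. Running the same inequalities forward, $d_{GH}(\cdots)<\bar{\epsilon}$ forces $\mathcal{U}(\hat{y})>1-\frac{\delta_0}{2}$, hence by Proposition~\ref{prn:A6_1} the point $\hat{y}$ is regular in $\hat{Y}$, and since $\hat{Y}$ is a cone and therefore its own tangent space at $\hat{y}$, it is isometric to $\R^m$.

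The only point that needs care --- and it is already settled inside the proof of Proposition~\ref{prn:A6_1} --- is the claim that a tangent space of an almost Einstein limit is again (realized as) an almost Einstein limit, so that Proposition~\ref{prn:A6_1} genuinely applies to $\hat{Y}$: the blow-up rescaling preserves the non-collapsing constant, the Ricci lower bound tends to the nonnegative one, and the space-time scalar error $E$ tends to $0$. Granting this, the corollary is a formal consequence of the cone structure of tangent spaces together with volume continuity under lower Ricci bounds.
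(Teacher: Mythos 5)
Your proof is correct and is precisely the ``routine argument'' the paper alludes to (it gives no details, deferring the corollary to Proposition~\ref{prn:A6_1}). Both your contradiction argument and your forward argument work; the slightly more economical route is to apply Proposition~\ref{prn:A6_1} to $\bar{X}$ at $y$ rather than to $\hat{Y}$ at $\hat{y}$ --- since $\hat{Y}$ is a metric cone one has $\mathcal{U}(y)=\omega_m^{-1}\mathcal{H}^m\bigl(B_{\hat{g}}(\hat{y},1)\bigr)$, which volume continuity forces to exceed $1-\delta_0/2$ once $d_{GH}<\bar\epsilon$, so $y$ is regular and every tangent cone, in particular $\hat{Y}$, is $\R^m$ --- but your step of realizing $\hat{Y}$ itself as an almost Einstein limit (already carried out inside the proof of Proposition~\ref{prn:A6_1}) is equally valid and reaches the same conclusion.
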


 Using the notation of~\cite{CC1}, Corollary~\ref{cly:A7_1} implies $\mathcal{R}=\mathcal{R}_{\bar{\epsilon}}$. Therefore, we have separated the singular points from
 the regular points substantially.    Then by using regularity results from the Ricci flow, we can smoothen the regular part $\mathcal{R}$.

\begin{proposition}
  Suppose $y \in \bar{X}$ is a regular point. Then there exists a constant $r=r(y)$
  with the following properties.
  \begin{itemize}
    \item $\left( B_{\bar{g}}(y, r), \bar{g} \right)$ is geodesic convex, i.e., every shortest geodesic connecting two points in $B_{\bar{g}}(y, r)$ cannot escape it.
    \item There exist a region $D \subset \R^m$ and a smooth metric tensor $g_D$ on $D$ such that $\left( B_{\bar{g}}(y, r), \bar{g} \right)$ is isometric to
          $(D, g_D)$.
	\item $Ric_{\bar{g}}(y)-\bar{\lambda}\bar{g}(y)=0$.
  \end{itemize}
  \label{prn:A4_2}
\end{proposition}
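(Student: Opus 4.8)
The plan is to realize a neighborhood of $y$ in $\bar X$ as the time-$\delta_0$ slice of a sequence of Ricci flows to which the pseudo-locality theorem (Theorem~\ref{thm:pseudo-locality_A22}) and the gap theorem (Theorem~\ref{thm:Gap_A22}) apply. First I would exploit regularity of $y$: by~\cite{CC1} a tangent cone at $y$ is the metric cone over a space whose unit ball has the same volume ratio as $B(y,\rho)$ for $\rho\to 0$, and since $y$ is regular this forces $\mathcal{U}(y)=1$. Fix $r_0>0$ small with $r_0^2\le\delta_0$ and $\omega_m^{-1}r_0^{-m}|B(y,r_0)|>1-\tfrac12\delta_0$. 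Lift $y$ to points $y_i\in X_i$ with $y_i\to y$; by continuity of the renormalized measure under the convergence, $\omega_m^{-1}r_0^{-m}|B_{g_i}(y_i,r_0)|_{d\mu_{g_i}}>1-\delta_0$ for all large $i$. Rescale $h_i(t):=r_0^{-2}g_i(r_0^2t)$, a normalized Ricci flow $\partial_t h_i=-Ric(h_i)+(r_0^2\lambda_i)h_i$ with $|r_0^2\lambda_i|\le 1$ defined on $[0,r_0^{-2}]\supset[0,1]$. Since the Ricci tensor is scale invariant, $Ric(h_i(0))\ge-(m-1)\delta_0$ on $B_{h_i(0)}(y_i,1)$, and $|B_{h_i(0)}(y_i,1)|_{d\mu_{h_i}}=r_0^{-m}|B_{g_i}(y_i,r_0)|_{d\mu_{g_i}}\ge(1-\delta_0)\omega_m$, so the hypotheses of Theorem~\ref{thm:pseudo-locality_A22} (hence also those of Lemma~\ref{lma:Ricbyt} and Corollary~\ref{cly:containing}) hold at $x_0=y_i$.

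Next I would run each flow to the definite time $\delta_0$ and extract a smooth limit. Theorem~\ref{thm:pseudo-locality_A22} gives $|Rm|_{h_i}(x,t)\le t^{-1}$ and $|B_{h_i(t)}(x,\sqrt t)|_{d\mu_{h_i(t)}}\ge\kappa_0 t^{m/2}$ for $x\in B_{h_i(0)}(y_i,\tfrac34)$, $t\in(0,2\delta_0]$. By the change of variables $t=r_0^2\tau$, the rescaled energy is $\int_0^{2\delta_0}\int_{B_{h_i(0)}(y_i,1)}|R_{h_i}-mr_0^2\lambda_i|\,d\mu\,d\tau= r_0^{-m}\int_0^{2\delta_0 r_0^2}\int_{B_{g_i}(y_i,r_0)}|R-m\lambda_i|\,d\mu\,dt\le r_0^{-m}E_i\to 0$, so for $i$ large Corollary~\ref{cly:containing} gives $B_{h_i(\delta_0)}(y_i,\tfrac14)\subset B_{h_i(0)}(y_i,\tfrac34)$; hence $|Rm|_{h_i(\delta_0)}\le\delta_0^{-1}$ and the non-collapsing estimate hold on $B_{h_i(\delta_0)}(y_i,\tfrac14)$. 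The Cheeger--Gromov--Taylor injectivity radius estimate~\cite{CGT} then bounds $inj_{h_i(\delta_0)}$ from below near $y_i$, and interior curvature bounds on $[\tfrac{\delta_0}{2},\delta_0]$ together with Shi's local derivative estimates bound $|\nabla^k Rm|_{h_i(\delta_0)}$ for every $k$ on a slightly smaller ball. Thus, after passing to a subsequence, $(B_{h_i(\delta_0)}(y_i,\tfrac18),h_i(\delta_0),y_i)$ converges in the pointed $C^\infty$ Cheeger--Gromov sense to a smooth pointed Riemannian manifold.

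Then I would identify this smooth limit with $\bar g$, and deduce convexity and the Einstein identity. By the gap theorem (Theorem~\ref{thm:Gap_A22}, inequality~(\ref{eqnin:gdtwotimes})), for any fixed small $r$ and $i$ large, $d_{GH}\big((B_{h_i(0)}(y_i,r),h_i(0)),(B_{h_i(\delta_0)}(y_i,r),h_i(\delta_0))\big)\le L_0\,r^{-1}(r_0^{-m}E_i)^{\frac{1}{3m(m+3)}}\to 0$. Since $h_i(0)=r_0^{-2}g_i$ converges in the pointed Gromov--Hausdorff topology to $r_0^{-2}\bar g$ near $y$, while $h_i(\delta_0)$ converges in $C^\infty$ (hence in Gromov--Hausdorff topology) to the smooth limit, uniqueness of Gromov--Hausdorff limits forces $(B_{r_0^{-2}\bar g}(y,r),r_0^{-2}\bar g)$ to be isometric, as a metric space, to a metric ball in that smooth limit. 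Pulling back by a coordinate chart and rescaling, we obtain a region $D\subset\R^m$ and a smooth metric $g_D$ on $D$ such that $(B_{\bar g}(y,r(y)),\bar g)$ is isometric to $(D,g_D)$ with $r(y):=rr_0$; shrinking $r$ below the convexity radius of $g_D$ makes this ball strongly geodesically convex, which gives the first two bullets. Finally, Lemma~\ref{lma:Ricbyt} applied to $h_i$ at scale $s=\delta_0$ and point $y_i$ gives $|Ric(h_i)-(r_0^2\lambda_i)h_i|(y_i,\delta_0)\le C(m)\delta_0^{-\frac{m+4}{2}}(r_0^{-m}E_i)^{1/2}\to 0$; since $r_0^2\lambda_i\to r_0^2\bar\lambda$, passing to the $C^\infty$ limit yields $Ric(g_D)=r_0^2\bar\lambda\,g_D$ at the image of $y$, and scale invariance of the Ricci tensor together with $\bar g=r_0^2 g_D$ near $y$ translates this to $Ric_{\bar g}(y)-\bar\lambda\,\bar g(y)=0$ (the same argument at every point in fact gives the identity on all of $B_{\bar g}(y,r(y))$).

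I expect the main obstacle to be bookkeeping of the quantifiers so that all the estimates are genuinely uniform in $i$: the scale $r_0$, the stopping time $\delta_0$, and the radius $r$ must all be fixed before letting $i\to\infty$, so that the rescaled energies $r_0^{-m}E_i$ really tend to zero and the hypotheses of Theorem~\ref{thm:pseudo-locality_A22}, Corollary~\ref{cly:containing}, Theorem~\ref{thm:Gap_A22} and Lemma~\ref{lma:Ricbyt} hold simultaneously for $i$ large. The conceptually delicate step is the identification: one must ensure that the $C^\infty$ limit of the time-$\delta_0$ metrics is isometric to $\bar g$ near $y$ rather than to some other nearby metric, and this is precisely what the gap theorem provides by bounding the Gromov--Hausdorff displacement of the metric along the flow in terms of the vanishing scalar energy.
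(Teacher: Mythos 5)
Your proof is correct and follows essentially the same route as the paper's: blow up at scale $r_0$ so that the pseudo-locality hypotheses hold, run the rescaled normalized Ricci flow to a fixed time $\delta_0$, extract a smooth limit of the time-$\delta_0$ slices via curvature/injectivity-radius bounds and Shi's estimates, identify this smooth limit with $\bar g$ near $y$ using the gap theorem (Theorem~\ref{thm:Gap_A22}), and finally apply Lemma~\ref{lma:Ricbyt} to pass the vanishing Ricci defect to the limit. You are somewhat more explicit than the paper about the rescaled-energy bookkeeping ($r_0^{-m}E_i\to 0$ since $r_0$ is fixed) and about why uniqueness of the Gromov--Hausdorff limit identifies the $C^\infty$ time-$\delta_0$ limit with $(B_{\bar g}(y,\cdot),\bar g)$, but these are exactly the (implicit) steps in the paper's argument, not a different method.
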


\begin{proof}
  Since $y$ is regular, $\mathcal{U}(y)=1$. So we can find $r_0=r_0(y)$ such that $\mathcal{U}(y,\rho)>\left( 1-\frac{\delta_0}{2} \right)$ for every $0<\rho<r_0(y)$.
  Suppose $y_i \to y$ as $\left( X_i, x_i, g_i \right)$ converges to $\left( \bar{X}, \bar{x}, \bar{g} \right)$.  By volume continuity, we have for large $i$,
$$\displaystyle  r_0^{-m} \left| B_{g_i}(y_i, r_0)\right|_{d\mu_{g_i}} > \left( 1-\delta_0 \right) \omega_m.$$
Without loss of generality, we choose $r_0<\sqrt{\delta_0}$.
  Let $\tilde{g}_i=r_0^{-2}g_i$, $\Omega_i=B_{\tilde{g}_i}(y_i, 1)$.  Then we have
  \begin{align}
    Ric_{\tilde{g}_i}(x) \geq -(m-1)r_0^2>-(m-1)\delta_0, \; \forall \; x \in \Omega_i; \quad
    \left| B_{\tilde{g}_i}(y_i, 1)\right|_{d\mu_{\tilde{g}_i}} \geq \left( 1-\delta_0 \right) \omega_m.
  \label{eqn:A5_2}
  \end{align}
  So we can apply Theorem~\ref{thm:Gap_A22} for the new almost Einstein sequence $(X_i, y_i, \tilde{g}_i)$. By (\ref{eqnin:gdtwotimes}), it turns out that
  \begin{align}
    \lim_{i \to \infty} 8 d_{GH} \left( \left( B_{\tilde{g}_i(0)}\left(y_i, \frac{1}{8} \right), \tilde{g}_i(0) \right),
    \left( B_{\tilde{g}_i(\delta_0)} \left(y_i, \frac{1}{8} \right), \tilde{g}_i(\delta_0) \right)\right) =0.
    \label{eqn:A4_5}
  \end{align}
  Denote the common Gromov-Hausdorff limit of the two sequences of geodesic balls in (\ref{eqn:A4_5}) by
  $\left( B_{\tilde{g}_{\infty}}\left(y_{\infty}, \frac{1}{8} \right), \tilde{g}_{\infty} \right)$.
  Note that $B_{\tilde{g}_i(\delta_0)} \left(y_i, \frac{1}{8} \right) \subset B_{\tilde{g}_i(0)} \left(y_i, \frac{1}{2} \right)$ by Theorem~\ref{thm:isometry_almost}.
  Therefore, Theorem~\ref{thm:pseudo-locality_A22} and Shi's local estimate imply that there exist a small positive number  $\rho_0 << \min\left\{\frac{1}{8}, \delta_0 \right\}$
  and large positive constants $C_k$ such that
  \begin{align}
    \inf_{B_{\tilde{g}_i(\delta_0)}(y_i, \rho_0)} inj_{\tilde{g}_i(\delta_0)}(x) >> \rho_0;
    \quad \sup_{B_{\tilde{g}_i(\delta_0)}(y_i, \rho_0)} \left|\nabla ^k Rm \right|_{\tilde{g}_i(\delta_0)}(x) << C_k \rho_0^{-2-k}, \forall \; k \in \Z^{+} \cup \left\{ 0 \right\}.
    \label{eqn:A5_1}
  \end{align}
  Consequently, $\left( B_{\tilde{g}_{\infty}}\left(y_{\infty}, \rho_0 \right), \tilde{g}_{\infty} \right)$
  is a convex smooth geodesic ball.  Denote $h=r_0^2 \tilde{g}_{\infty}$. Of course, $\left( B_{h}\left(y_{\infty}, r_0\rho_0 \right), h\right)$
  is a convex smooth geodesic ball.  By exponential map with respect to $h$, we can find $D \subset \R^m$ and smooth $g_D$ such that $(D, g_D)$ is isometric
  to $\left( B_{h}\left(y_{\infty}, r_0\rho_0 \right), h\right)$, which is the Gromov-Hausdorff limit of  $(B(y_i, r_0\rho_0), g_i(0))$.  So we finish the proof of the first two
  properties by letting $r=r_0 \rho_0$.   The last property follows from Lemma~\ref{lma:Ricbyt}.  Actually, (\ref{eqn:A5_2}) guarantees that we can apply inequality (\ref{eqn:f20_ricup}) to obtain
  \begin{align*}
    \left|Ric_{g_i}-\lambda_i g_i \right|\left(y_i, \delta_0 r_0^2 \right)=
    r_0^{-2} \left|Ric_{\tilde{g}_i}-\lambda_ir_0^2 \tilde{g}_i \right|(y_i, \delta_0)
    < C \left\{ \int_0^{2\delta_0} \int_{X_i} \left|R-m r_0^2\lambda_i \right|_{\tilde{g}_i(t)} d\mu_{\tilde{g}_i(t)} dt \right\}^{\frac{1}{2}} \to 0,
  \end{align*}
  where $C=C(m,r_0,\delta_0)$. Since $\bar{g}$ is the smooth limit of $g_i(\delta_0r_0^2)$ around $y$, we obtain $Ric_{\bar{g}}(y)=\bar{\lambda} \bar{g}(y)$.
\end{proof}

  For brevity, for every point $x \in \bar{X}$, define the volume radius
  \begin{align}
    r_V(x) \triangleq \max \left\{ r>0 \left|   \mathcal{U}(x,r) \geq \left(1- \frac{1}{2}\delta_0 \right)  \right. \right\}
    \label{eqn:A6_rV}
  \end{align}
  whenever the set is nonempty. Otherwise, let $r_V(x)=0$.
  Define  $\displaystyle  \mathcal{V}_r \triangleq \left\{ x \in \bar{X} | r_{V}(x) \leq r \right\}$, the set of points whose volume radius is not greater than $r$.
  Clearly, $\mathcal{V}_{0}$ is nothing but the singular set $\mathcal{S}$.
  Using the notation in~\cite{CC1}, for a metric space $Z$, we assume $z^*$ is the vertex of the metric cone $C(Z)$. Then for every pair of small positive constants $\eta, \xi$ and
  radius $0<r<\xi$, we define
  \begin{align*}
  \mathcal{S}_{\eta, (r,\xi)}^{k} = \left\{ y \in \bar{X}  \left| \inf_{r<s<\xi}s^{-1} d_{GH} \left( B(y, s), B\left( \left( \underline{0}, z^* \right), s \right) \right) \geq \eta, \;
  \textrm{for all} \; \R^{k+1} \times C(Z) \right. \right\}.
 \end{align*}
 Note that our $\mathcal{S}_{\eta, \left( r, 1\right)}^{k}$ is $\mathcal{S}_{\eta, r}^{k}$ in~\cite{ChNa}.
 By Theorem 1.10 of~\cite{ChNa}, a standard rescaling argument shows that for every $\xi <1$ and $\eta <<1$,
$$\displaystyle \xi^{-m} \left| B(y, 2\xi) \cap \mathcal{S}_{\eta, (r,\xi)}^{m-2} \right| \leq C(m,\kappa, \eta) \left(\frac{r}{\xi}\right)^{2-\eta}$$
whenever $y \in B(\bar{x}, 2)$. Consequently, the non-collapsed condition and a ball-covering argument imply that
$$\displaystyle \left| B(\bar{x}, 2) \cap \mathcal{S}_{\eta, (r,\xi)}^{m-2} \right| \leq C(m,\kappa,\eta) \xi^{-2+\eta} r^{2-\eta}.$$
In particular, we have
$$\displaystyle \left|B(\bar{x}, 2) \cap \mathcal{S}_{\eta, (r,\eta)}^{m-2} \right| < C(m,\kappa,\eta) r^{2-\eta}.$$
Therefore, we can obtain
 \begin{align}
   \left| B(\bar{x}, 2) \cap \mathcal{V}_{r} \right| \leq C(m,\kappa,\eta) r^{2-\eta}
 \label{eqn:A6_8}
 \end{align}
 if we can prove $\mathcal{V}_{r} \subset \mathcal{S}_{\eta, (r,\eta)}^{m-2}$.  In fact, this relationship follows from the following Lemma.

\begin{lemma}
There exists a constant $\eta_0=\eta_0(m, \kappa)$ with the following property.

Suppose that $(Y, g)$ is an $m$-dimensional complete Riemannian manifold, $Ric(x) \geq  -(m-1)$ in a geodesic ball $B(y_0, 2)$, $|B(y_0, 1)| \geq \kappa$.
If $0<r<\eta<\eta_0$ and $\displaystyle r^{-m}|B(y_0,r)|=\left(1-\frac{\delta_0}{2}\right)\omega_m$, then for every metric space $Z$, we have
\begin{align}
  \inf_{r<s<\eta} s^{-1} d_{GH}\left( B(y_0,s), B\left( \left( \underline{0}, z^* \right), s \right) \right) \geq \eta,
  \label{eqn:A6_4}
\end{align}
where $z^*$ is the vertex of the metric cone $C(Z)$,  $\left( \underline{0}, z^* \right) \in \R^{m-1} \times C(Z)$.
\label{lma:A6_3}
\end{lemma}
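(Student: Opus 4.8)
The plan is to argue by contradiction and blow up at the scale realizing the near-splitting. Suppose no such $\eta_0$ exists. Then for every $i$ there are an $m$-dimensional complete manifold $(Y_i,g_i)$ and a point $y_{0,i}$ with $Ric(g_i)\ge -(m-1)$ on $B_{g_i}(y_{0,i},2)$ and $|B_{g_i}(y_{0,i},1)|\ge\kappa$, radii $0<r_i<\eta_i<\frac1i$ with $r_i^{-m}|B_{g_i}(y_{0,i},r_i)|=(1-\frac{\delta_0}{2})\omega_m$, a metric space $Z_i$, and a scale $s_i\in(r_i,\eta_i)$ such that, writing $o_i$ for the vertex of the metric cone $K_i:=\R^{m-1}\times C(Z_i)$,
\[
 s_i^{-1}\,d_{GH}(B_{g_i}(y_{0,i},s_i),\,B_{K_i}(o_i,s_i))<\eta_i .
\]
Set $\tilde g_i:=s_i^{-2}g_i$ and $\rho_i:=r_i/s_i\in(0,1)$. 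Since $s_i\le\frac1i\to0$ we get $Ric(\tilde g_i)\ge-(m-1)s_i^2\to0$ on $B_{\tilde g_i}(y_{0,i},2)$; Bishop--Gromov on $(Y_i,g_i)$ together with $s_i\le1$ gives the uniform non-collapsing $|B_{\tilde g_i}(y_{0,i},1)|\ge c(m,\kappa)>0$; the volume hypothesis becomes $\rho_i^{-m}|B_{\tilde g_i}(y_{0,i},\rho_i)|=(1-\frac{\delta_0}{2})\omega_m$; and the failure hypothesis becomes $d_{GH}(B_{\tilde g_i}(y_{0,i},1),B_{K_i}(o_i,1))<\eta_i\to0$. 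By Gromov compactness and the volume-continuity theory of Cheeger--Colding (see also~\cite{Colding_volume}), after passing to a subsequence $(Y_i,y_{0,i},\tilde g_i)$ converges in the pointed Gromov--Hausdorff sense to a non-collapsed limit $(Y_\infty,y_\infty)$ of Hausdorff dimension $m$, obeying Bishop--Gromov monotonicity with Ricci lower bound $0$ and carrying $\mathcal H^m$ as its limit measure; we may also assume $\rho_i\to\rho_\infty\in[0,1]$.

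Next I would identify the limit. Each $K_i$ is a metric cone with vertex $o_i$ that splits off an $\R^{m-1}$-factor isometrically, so a standard diagonal/rescaling argument shows that the Gromov--Hausdorff limit of the balls $B_{K_i}(o_i,1)$ is a unit ball $B_{K_\infty}(o_\infty,1)$ around the vertex of a metric cone $K_\infty=\R^{m-1}\times W$ that still splits off $\R^{m-1}$, where $W$ is a metric cone with vertex $w^*$. Because $d_{GH}(B_{\tilde g_i}(y_{0,i},1),B_{K_i}(o_i,1))\to0$, the two families of unit balls share the same limit, so $B(y_\infty,1)$ is isometric to $B_{K_\infty}(o_\infty,1)$ with $y_\infty$ corresponding to $o_\infty$. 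Hence, in a neighbourhood of $y_\infty$, $Y_\infty$ is isometric to $\R^{m-1}\times W$ near its vertex. Since $\dim_{\mathcal H}Y_\infty=m$, the cone $W$ is a geodesic space of Hausdorff dimension $1$; thus $W$ is either $[0,\infty)$, or $\R$, or a ``spider'' of $n\ge3$ rays. The spider case would put a codimension-one piece into the singular set of the Ricci-limit $Y_\infty$, and $W=[0,\infty)$ would make $y_\infty$ a boundary point of $Y_\infty$; both are excluded by the Cheeger--Colding structure theory for non-collapsed Ricci limits ($\mathcal S^{m-1}=\mathcal S^{m-2}$, no codimension-one singular stratum). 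Therefore $W=\R$, so $Y_\infty$ is isometric to $\R^m$ near $y_\infty$.

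Finally I would extract the contradiction from the volume hypothesis. Since $B(y_\infty,1)$ is isometric to a ball around the vertex of the cone $\R^{m-1}\times W=\R^m$, cone scaling of $\mathcal H^m$ gives $\omega_m^{-1}\rho^{-m}|B(y_\infty,\rho)|\equiv 1$ for all $\rho\in(0,1]$. On the other hand, passing the Bishop--Gromov monotonicity of $(Y_i,\tilde g_i)$ to the limit — using that $s_i^2\to0$ makes the comparison model Euclidean, volume continuity, and $\rho_i^{-m}|B_{\tilde g_i}(y_{0,i},\rho_i)|=(1-\frac{\delta_0}{2})\omega_m$ — forces $\omega_m^{-1}\rho^{-m}|B(y_\infty,\rho)|\le 1-\frac{\delta_0}{2}$ for every $\rho>\rho_\infty$ (hence for all $\rho\in(0,1]$ when $\rho_\infty=0$), and $\omega_m^{-1}\rho_\infty^{-m}|B(y_\infty,\rho_\infty)|=1-\frac{\delta_0}{2}$ when $\rho_\infty>0$. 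Either way this is incompatible with the constant value $1$, so no such counterexample sequence can exist and the lemma holds with a suitable $\eta_0=\eta_0(m,\kappa)$.

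The rescaling, the uniform non-collapsing, and the volume bookkeeping are routine. The main obstacle is the identification of the blow-up limit together with the exclusion of the half-space (boundary) case, i.e. the appeal to the fact that a non-collapsed Ricci-limit space carries no codimension-one singular stratum; this is the one genuinely nontrivial input.
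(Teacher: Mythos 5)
Your proof is correct and follows essentially the same route as the paper: blow up at the scale $s_i$ realizing the near-splitting, identify the rescaled limit ball with a ball about the vertex of a cone $\R^{m-1}\times W$, invoke the Cheeger--Colding structure theory (no codimension-one singular stratum for a non-collapsed Ricci-limit) to force $W=\R$ so the limit is Euclidean, and then derive a contradiction from (almost-)Bishop--Gromov monotonicity against the volume-pinching hypothesis at scale $r_i<s_i$. The paper phrases the identification step simply as ``every tangent cone of the limit at $\hat y$ is $\R^{m-1}\times C(\hat Z)$, hence $\R^m$ by~\cite{CC1}'' and keeps the final volume contradiction on the manifold level rather than in the limit, but the content is the same as your classification of $W$ and your $\rho_\infty$ case distinction.
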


\begin{proof}
  Otherwise, there exist a sequence of positive numbers $\eta_i \to 0$ and a sequence of Riemannian manifolds $(Y_i, y_i, h_i)$ with the given conditions violating the statements.
  \begin{itemize}
    \item  $r_i^{-m}|B(y_{i}, r_i)|_{d\mu_{h_i}} = (1-\delta_0)\omega_m$ for some $0<r_i<\eta_i$.
    \item  There exists $s_i \in (r_i, \eta_i)$ such that
      $s_i^{-1} d_{GH}\left( B(y_i, s_i),  B\left( \left( \underline{0}, z_i^* \right), s_i \right) \right) < \eta_i$ for some $\R^{m-1} \times C(Z_i)$.
  \end{itemize}
  Let $\tilde{h}_i=s_i^{-2} h_i$. Denote the pointed-Gromov-Hausdorff limit of $\left( B_{\tilde{h}_i}(y_i, 1), y_i, \tilde{h}_i \right)$ by $(\hat{B}, \hat{y}, \hat{g})$.
  By limit process, there exists a metric space $\hat{Z}$ such that
$$\hat{y}=\left(\underline{0}, \hat{z}^* \right) \in \R^{m-1} \times C(\hat{Z}),~~~\hat{B}=B\left( \left( \underline{0}, \hat{z}^* \right), 1\right).$$
Clearly, every tangent space of $\hat{y}$ is $\R^{m-1} \times C(\hat{Z})$, which must be $\R^{m}$ by~\cite{CC1}.
  Therefore, by the continuity of volume, we have
$$\displaystyle \lim_{i \to \infty} s_i^{-m} |B_{h_i}(y_i, s_i)|_{d\mu_{h_i}} = \lim_{i \to \infty}  |B_{\tilde{h}_i}(y_i, 1)|_{d\mu_{\tilde{h}_i}}=\omega_m,$$
which yields
   \begin{align*}
    \left(1- \frac{1}{2}\delta_0 \right) \omega_m = \lim_{i \to \infty} r_i^{-m} |B_{h_i}(y_i, r_i)|_{d\mu_{h_i}}
    \geq  \lim_{i \to \infty} s_i^{-m} |B_{h_i}(y_i, s_i)|_{d\mu_{h_i}}=\omega_m.
  \end{align*}
  by volume comparison. Contradiction!
\end{proof}

Suppose $r_V(y)=1$.  Let $y_i \to y$ as $(X_i, x_i, g_i)$ converges to $\left( \bar{X}, \bar{x}, \bar{g} \right)$.
Applying inequality (\ref{eqn:pseudo}) to the flow $\left\{ \left( X_i, y_i, g_i(t) \right), 0 \leq t \leq 1 \right\}$,
we have $\displaystyle |Rm|(y) = \lim_{i \to \infty} |Rm|_{g_i(\delta_0)}(y_i) \leq \delta_0^{-1}$.
By a trivial rescaling argument, we see that
\begin{align}
  |Rm|(y) \min \left\{r_{V}^2(y), 1 \right\}\leq \delta_0^{-1}
  \label{eqn:A6_7}
\end{align}
for every $y \in \mathcal{R}$. Follow the route of~\cite{ChNa} for the Einstein case, we can obtain some bounds of curvature integration on $\mathcal{R}$.
\begin{proposition}
  For every $0<p<1$ and $\rho \geq 1$, we have a constant $C=C(m,\kappa,p,\rho)$ such that
  \begin{align*}
    \int_{B(\bar{x}, \rho) \cap \mathcal{R}} |Rm|^{p} d\mu < C.
  \end{align*}
  \label{prn:A6_2}
\end{proposition}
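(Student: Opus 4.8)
\emph{Proof proposal.} The plan is to run the dyadic decomposition argument of Cheeger--Naber~\cite{ChNa}, splitting $\mathcal{R}$ into the pieces where the volume radius $r_V$ (see (\ref{eqn:A6_rV})) is of a fixed dyadic size. On each such piece the curvature is controlled pointwise by the inverse square of $r_V$ thanks to (\ref{eqn:A6_7}), while the total measure of that piece is controlled by the Minkowski-type bound (\ref{eqn:A6_8}) for the sublevel set $\mathcal{V}_{2^{-i}}$; the integral $\int |Rm|^p$ is then a geometric series that converges precisely because $p<1$.

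First I would reduce to $\rho\le 2$. Each $X_i$ satisfies $Ric(g_i)\ge -g_i\ge -(m-1)g_i$, so $\bar X$ inherits the Bishop--Gromov comparison, and $B(\bar x,\rho)$ can be covered by $N=N(m,\kappa,\rho)$ geodesic balls of radius $1$ centered at points of $B(\bar x,\rho)$; since the derivation of (\ref{eqn:A6_8}) uses only a local non-collapsing bound and the Ricci lower bound near the center (both uniform by volume comparison), the estimate (\ref{eqn:A6_8}) holds with any such center in place of $\bar x$. Hence it suffices to bound $\int_{\mathcal{R}\cap B(\bar x,2)}|Rm|^p\,d\mu$ by a constant $C(m,\kappa,p)$, which reassembles to $C(m,\kappa,p,\rho)$ after the covering.

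Now partition $\mathcal{R}\cap B(\bar x,2)$ by the size of $r_V$: let $A_0=\{y\in\mathcal{R}\cap B(\bar x,2): r_V(y)>1\}$ and, for $i\ge 1$, $A_i=\{y\in\mathcal{R}\cap B(\bar x,2): 2^{-i}<r_V(y)\le 2^{-i+1}\}$. Since a point is singular exactly when $r_V=0$, these pieces exhaust $\mathcal{R}\cap B(\bar x,2)$ exactly. On $A_0$, (\ref{eqn:A6_7}) gives $|Rm|\le\delta_0^{-1}$, so $\int_{A_0}|Rm|^p\,d\mu\le\delta_0^{-p}|B(\bar x,2)|\le C(m,p)$ by the absolute Bishop bound (recall $\delta_0=\delta_0(m)$ from Theorem~\ref{thm:pseudo-locality_A22}). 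On $A_i$ with $i\ge 1$, (\ref{eqn:A6_7}) gives $|Rm|(y)\le\delta_0^{-1}r_V(y)^{-2}\le\delta_0^{-1}4^{i}$, while $A_i\subset\mathcal{V}_{2^{-i+1}}\cap B(\bar x,2)$, so for all $i$ beyond some $i_0=i_0(\eta)$ (so that $2^{-i+1}<\eta$) the bound (\ref{eqn:A6_8}) yields $|A_i|\le C(m,\kappa,\eta)\,2^{-i(2-\eta)}$; the finitely many pieces $1\le i\le i_0$ contribute at most $\sum_{i\le i_0}\delta_0^{-p}4^{ip}|B(\bar x,2)|\le C(m,\kappa,p,\eta)$. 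Therefore, for $i>i_0$,
\[
\int_{A_i}|Rm|^p\,d\mu\ \le\ \delta_0^{-p}\,4^{ip}\cdot C(m,\kappa,\eta)\,2^{-i(2-\eta)}\ =\ C(m,\kappa,p,\eta)\,2^{\,i(2p-2+\eta)} .
\]
Since $p<1$ we have $2-2p>0$, so fix $\eta=\eta(m,p)\in(0,\eta_0)$ with $\eta<2-2p$; then $\sum_{i>i_0}2^{\,i(2p-2+\eta)}<\infty$, and adding the three contributions gives $\int_{\mathcal{R}\cap B(\bar x,2)}|Rm|^p\,d\mu\le C(m,\kappa,p)$, as required.

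The summation is routine; the substantive input has already been isolated in (\ref{eqn:A6_7}) (from pseudo-locality, Theorem~\ref{thm:pseudo-locality_A22}) and in (\ref{eqn:A6_8}) (the quantitative stratification of~\cite{ChNa} together with Lemma~\ref{lma:A6_3} and the cone structure of tangent spaces from~\cite{CC1}). Within the present argument the only genuinely delicate points are that (\ref{eqn:A6_8}) is available only at scales $r<\eta_0$ --- which forces the split into a finite "bad-scale" part and a convergent tail --- and that the auxiliary exponent $\eta$ must be chosen so that $2p-2+\eta<0$; this last coordination is exactly where the hypothesis $p<1$ is used (and, in the K\"ahler case, where the improved codimension $4$ of $\mathcal{S}$ upgrades the threshold to $p<2$).
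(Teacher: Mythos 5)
Your proof is correct and follows essentially the same route as the paper: both use the pointwise curvature bound (\ref{eqn:A6_7}), the measure estimate (\ref{eqn:A6_8}) for the sublevel sets $\mathcal{V}_r$, and a dyadic decomposition by volume radius whose resulting geometric series converges because $p<1$ forces the ratio $2^{2p-2+\eta}<1$ for small $\eta$. The paper compresses all of this into a single displayed inequality (with what looks like a typo, $\min$ in place of $\max$, for $r_V^{-2p}$ versus $1$), whereas you unpack the dyadic sum and also spell out the reduction to a fixed radius and the finite ``bad-scale'' tail, but the content is the same.
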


\begin{proof}
Without loss of generality, we assume $\rho=1$. Fix $\eta < (1-p)$, we have
\begin{align*}
  \delta_0^{p} \int_{B(\bar{x}, 1)} |Rm|^{p} d\mu
  <\int_{B(\bar{x}, 1)} \min \left\{r_V^{-2p}, 1 \right\} d\mu
  <C \left( 1+  \frac{1}{1-2^{2(p-1)+\eta}}\right)<C(m,\kappa,p),
\end{align*}
where we used (\ref{eqn:A6_8}) and (\ref{eqn:A6_7}).
\end{proof}

\begin{proposition}
  $\dim_{\mathcal{H}} \mathcal{S} \leq m-2$.
  \label{prn:A7_1}
\end{proposition}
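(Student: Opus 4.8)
The plan is to turn the measure bound \eqref{eqn:A6_8} on the sublevel sets $\mathcal{V}_r$ into an efficient covering of $\mathcal{S}$ by small balls, and then read off the Hausdorff dimension. The starting point is the observation already recorded above that $\mathcal{S}=\mathcal{V}_0$; equivalently, for every $y\in\mathcal{S}$ one has $\mathcal{U}(y,s)<1-\tfrac{\delta_0}{2}$ for all $s>0$. I will also use that the estimate \eqref{eqn:A6_8} is insensitive to the precise constant $\tfrac{\delta_0}{2}$ in the definition of $r_V$: replacing it by any fixed $\epsilon\in(0,\delta_0)$, say $\tfrac{\delta_0}{3}$, and writing $\mathcal{V}'_r$ for the corresponding sublevel set, Lemma~\ref{lma:A6_3} (a compactness argument, hence unaffected by the exact threshold) together with the Cheeger--Naber estimate still gives $|B(\bar x,2)\cap\mathcal{V}'_r|\le C(m,\kappa,\eta)r^{2-\eta}$ for small $\eta>0$.

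First I would reduce to bounding $\dim_{\mathcal{H}}(\mathcal{S}\cap B(\bar x,1))$, since $\mathcal{S}=\bigcup_{\rho\in\N}\mathcal{S}\cap B(\bar x,\rho)$ is a countable union. Fix a small $\eta>0$ and, for $0<r<\eta$, choose a maximal $r$-separated subset $\{y_i\}_{i=1}^{N}\subset\mathcal{S}\cap B(\bar x,1)$. By maximality $\mathcal{S}\cap B(\bar x,1)\subset\bigcup_i B(y_i,2r)$, while the balls $B(y_i,\tfrac r2)$ are pairwise disjoint and contained in $B(\bar x,2)$. The key local step is the claim that there is $C_1=C_1(m)$ with $B(y_i,\tfrac r2)\subset\mathcal{V}'_{C_1 r}$ for every $i$: if $z\in B(y_i,\tfrac r2)$ and $s\ge C_1 r$, then $B(z,s)\subset B(y_i,s+\tfrac r2)$, so using $y_i\in\mathcal{S}$,
\[
\mathcal{U}(z,s)\le\Big(1+\tfrac{r}{2s}\Big)^{m}\mathcal{U}\big(y_i,s+\tfrac r2\big)<\Big(1+\tfrac{1}{2C_1}\Big)^{m}\Big(1-\tfrac{\delta_0}{2}\Big)<1-\tfrac{\delta_0}{3},
\]
provided $C_1$ is chosen large depending only on $m$ (recall $\delta_0=\delta_0(m)$); hence $z\in\mathcal{V}'_{C_1 r}$.

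Combining this with the disjointness of the $B(y_i,\tfrac r2)$, the uniform lower volume bound $|B(y_i,\tfrac r2)|\ge c(m,\kappa)r^{m}$ (relative Bishop--Gromov in $\bar X$, using $B(\bar x,1)\subset B(y_i,2)$ and $|B(\bar x,1)|\ge\kappa$), and the measure estimate for $\mathcal{V}'$, I get
\[
N\,c(m,\kappa)\,r^{m}\le\sum_i\big|B(y_i,\tfrac r2)\big|\le\big|B(\bar x,2)\cap\mathcal{V}'_{C_1 r}\big|\le C(m,\kappa,\eta)(C_1 r)^{2-\eta},
\]
so $N\le C(m,\kappa,\eta)\,r^{2-\eta-m}$. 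Then $\sum_{i=1}^{N}\big(\diam B(y_i,2r)\big)^{m-2+2\eta}\le N(4r)^{m-2+2\eta}\le C(m,\kappa,\eta)\,r^{\eta}\to 0$ as $r\to 0$, whence $\mathcal{H}^{m-2+2\eta}(\mathcal{S}\cap B(\bar x,1))=0$. Letting $\eta\downarrow 0$ gives $\dim_{\mathcal{H}}(\mathcal{S}\cap B(\bar x,1))\le m-2$, and the countable-union remark then yields $\dim_{\mathcal{H}}\mathcal{S}\le m-2$.

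I expect the only genuine subtlety, and the main point to get right, to be the bookkeeping around the volume-ratio threshold in the key local step: one cannot propagate the sharp inequality $\mathcal{U}<1-\tfrac{\delta_0}{2}$ from $y_i$ to a nearby point $z$ without losing a little room, so one must either work with the slightly relaxed family $\mathcal{V}'$ (and check that \eqref{eqn:A6_8} survives this harmless change of constant) or, equivalently, invoke Lemma~\ref{lma:A6_3} and Proposition~\ref{prn:A6_1} at scale $C_1 r$ to see directly that $\{x\in B(\bar x,2):\mathcal{U}(x,C_1 r)<1-\tfrac{\delta_0}{3}\}$ lies in a quantitative stratum $\mathcal{S}^{m-2}_{\eta,(cr,\eta)}$ and hence has measure $\le C(m,\kappa,\eta)r^{2-\eta}$. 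Everything else is the standard packing argument for Hausdorff dimension.
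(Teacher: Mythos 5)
Your argument is correct and fills in the details that the paper compresses into one line. The paper simply writes ``It follows from inequality (\ref{eqn:A6_8}) and the fact that $\dim_{\mathcal{H}} \mathcal{S}$ is an integer,'' leaving the packing argument to the reader; you make that argument explicit. The two routes are the same in substance --- both convert the measure estimate (\ref{eqn:A6_8}) into a Hausdorff dimension bound --- but you handle the $\eta\downarrow 0$ limit directly, so you do not actually need the integrality of $\dim_{\mathcal{H}}\mathcal{S}$. That simplification is a genuine gain, since the integrality fact is itself a nontrivial output of Cheeger--Colding theory.

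One small imprecision to flag in your key local step. You write $\mathcal{U}(y_i, s+\tfrac r2)<1-\tfrac{\delta_0}{2}$, treating the volume ratio at a fixed positive scale as bounded by the density $\mathcal{U}(y_i)$. Under $Ric\geq 0$ the ratio $\mathcal{U}(y,\rho)$ is genuinely non-increasing in $\rho$, so this would be exact; but in the almost Einstein setting the hypothesis is only $Ric\geq -(m-1)$, and Bishop--Gromov compares against the hyperbolic volume $V_{-1}(\rho)$. One then has $\mathcal{U}(y_i,\rho)\leq \mathcal{U}(y_i)\cdot \frac{V_{-1}(\rho)}{\omega_m\rho^m}=\left(1+O(\rho^2)\right)\mathcal{U}(y_i)$, i.e.\ the density controls the ratio only up to a multiplicative $(1+O(\rho^2))$ correction. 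This does not damage the conclusion: restricting to scales $s+\tfrac{r}{2}<\eta_0(m)$ small enough that $(1+\tfrac{1}{2C_1})^m(1+C\eta_0^2)(1-\tfrac{\delta_0}{2})<1-\tfrac{\delta_0}{3}$ is all that is needed, and $\delta_0$, $C_1$, $\eta_0$ all depend only on $m$. You should also note (as you anticipated) that the relaxed threshold $1-\tfrac{\delta_0}{3}$ requires re-running Lemma~\ref{lma:A6_3}, which is harmless since that compactness argument only uses that the threshold is a fixed constant strictly less than $1$. With those two clarifications the proof is complete.
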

\begin{proof}
 It follows from inequality (\ref{eqn:A6_8}) and the fact that  $\dim_{\mathcal{H}} \mathcal{S}$ is an integer.
\end{proof}

Combine all the discussions in this subsection, we finish the proof of Theorem~\ref{thmin:goodlimit}.

\subsection{K\"ahler case}
Suppose $\left( M_i, x_i, g_i, J_i \right)$ is a sequence of almost K\"ahler Einstein manifolds.  Let $(\bar{M}, \bar{x}, \bar{g})$ be the limit space of
$\left( M_i, x_i, g_i \right)$, $\bar{\lambda}$ be the limit of $\lambda_i$, $\bar{M}=\mathcal{R} \cup \mathcal{S}$ be the regular-singular decomposition.

It is not hard to see that $\mathcal{R}$ has a complex structure
$\bar{J}$ compatible with $\bar{g}$ and $\nabla_{\bar{g}} \bar{J}=0$.  Actually, it suffices to prove the existence of such $\bar{J}$ locally.
Fix $y \in \mathcal{R}$. Let $r_0=\frac{1}{2} r_V(y)$. Suppose $y_i \to y$ as $\left( M_i, x_i, g_i \right)$ converges to $\left( \bar{M}, \bar{x}, \bar{g} \right)$.
By the construction of $\bar{g}$, we know that
$B_{\bar{g}}(y, r_0)$ is the smooth limit of $\left( B_{g_i(\delta_0 r_0^2)}(y_i, r_0),  g_i(\delta_0 r_0^2) \right)$. Therefore, the complex structure $J_i$ on
$B_{g_i(\delta_0r_0^2)}(y_i, \delta_0 r_0^2)$ converges to the limit complex structure $\bar{J}$, which is compatible with $\bar{g}$ and $\nabla_{\bar{g}} \bar{J}=0$.

For non-collapsed limit of K\"ahler manifolds with bounded Ricci curvature, it was shown that every non-Euclidean tangent cone can split at most $2n-4$ independent lines.
The argument was based on an $\epsilon$-regularity theorem(c.f.Theorem 5.2 of~\cite{Ch1}), which can be improved to obtain the following Lemma.

\begin{lemma}
There exists a constant $\xi_0=\xi_0(n, \kappa)$ with the following property.

Suppose $(N, y_0, h, J)$ is a complete K\"ahler manifold of complex dimension $n$, $Ric \geq  -(n-1)$ on $N$, $|B(y_0,1)| \geq \kappa$.
Suppose for the scales $0<r<\eta<\xi_0$, we have
\begin{itemize}
  \item $\displaystyle r^{-2n}|B(y_0,r)|=\left(1- \frac{\delta_0}{2} \right)\omega_{2n}$.
  \item $\displaystyle \sup_{r<s<\eta} s^{2-2n} \int_{B(y_0, 10s)} |Ric|d\mu<\eta$.
\end{itemize}
Then for every metric space $Z$, we have
\begin{align}
  \inf_{r<s<\eta} s^{-1} d_{GH}\left( B(y_0,s), B\left( \left( \underline{0}, z^* \right), s \right) \right) \geq \eta,
  \label{eqn:A7_4}
\end{align}
where $z^*$ is the vertex of the metric cone $C(Z)$,  $\left( \underline{0}, z^* \right) \in \R^{2n-3} \times C(Z)$.
\label{lma:A7_3}
\end{lemma}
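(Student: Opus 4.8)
The plan is to argue by contradiction through a rescaling and pointed Gromov--Hausdorff compactness argument, in close parallel with the proof of Lemma~\ref{lma:A6_3}, with Cheeger's $\epsilon$-regularity theorem for K\"ahler manifolds (the improved form of Theorem 5.2 of~\cite{Ch1}) playing the role that the Cheeger--Colding codimension-two estimate played in the Riemannian case.

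Suppose the conclusion fails. Then there are $\eta_i \to 0$ and complete K\"ahler manifolds $(N_i, y_i, h_i, J_i)$ of complex dimension $n$ with $Ric \geq -(n-1)$ and $|B_{h_i}(y_i,1)| \geq \kappa$, together with scales $0 < r_i < \eta_i$ satisfying $r_i^{-2n}|B_{h_i}(y_i, r_i)| = (1-\tfrac{\delta_0}{2})\omega_{2n}$ and $\sup_{r_i < s < \eta_i} s^{2-2n}\int_{B_{h_i}(y_i, 10s)} |Ric|\, d\mu < \eta_i$, but for which there exist $s_i \in (r_i, \eta_i)$, metric spaces $Z_i$, and a factorization $\R^{2n-3}\times C(Z_i)$ such that $s_i^{-1} d_{GH}\big(B_{h_i}(y_i, s_i), B((\underline 0, z_i^*), s_i)\big) < \eta_i$, where $z_i^*$ is the vertex of $C(Z_i)$. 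Rescale by setting $\tilde h_i = s_i^{-2} h_i$. Then $Ric_{\tilde h_i}$ is bounded below by a quantity tending to $0$, the non-collapsing bound $|B_{\tilde h_i}(y_i,1)|_{\tilde h_i} \geq \kappa$ survives, and, taking $s = s_i$ in the integral hypothesis and using the scale-invariant identity $\int_{B_{\tilde h_i}(y_i, 10)}|Ric_{\tilde h_i}|\, d\mu_{\tilde h_i} = s_i^{2-2n}\int_{B_{h_i}(y_i, 10 s_i)}|Ric_{h_i}|\, d\mu_{h_i}$, the $L^1$-norm of $Ric_{\tilde h_i}$ over the unit ball tends to $0$. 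Passing to a subsequence, $(B_{\tilde h_i}(y_i, 1), y_i, \tilde h_i)$ converges in the pointed Gromov--Hausdorff topology to some $(\hat B, \hat y, \hat g)$; from the hypothesis being violated and the usual limiting argument there is a metric space $\hat Z$ with $\hat y = (\underline 0, \hat z^*) \in \R^{2n-3}\times C(\hat Z)$ and $\hat B = B((\underline 0, \hat z^*), 1)$.

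The key step is to invoke the improved K\"ahler $\epsilon$-regularity theorem. The limit $(\hat B, \hat y, \hat g)$ is a non-collapsed limit of K\"ahler manifolds with Ricci curvature bounded below by a vanishing quantity and with scale-invariant $L^1$-Ricci density tending to zero; since in the K\"ahler setting the singular set has real codimension at least $4$, a unit ball this close to the space $\R^{2n-3}\times C(\hat Z)$, which splits off a Euclidean factor of codimension $3$, is forced to be close to the Euclidean ball. Concretely, the vertex region $\R^{2n-3}\times\{\hat z^*\}$ has codimension $3$, so the codimension-four regularity forces $\hat z^*$ to be a regular point of $C(\hat Z)$; hence the $3$-dimensional cone $C(\hat Z)$ equals $\R^3$, $\R^{2n-3}\times C(\hat Z) = \R^{2n}$, and $\hat B$ is isometric to the unit Euclidean ball. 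By the continuity of volume under a lower Ricci bound, $\lim_{i\to\infty} s_i^{-2n}|B_{h_i}(y_i, s_i)|_{h_i} = |\hat B|_{\hat g} = \omega_{2n}$. Then, since $r_i < s_i$ and $\eta_i \to 0$, the Bishop--Gromov volume comparison gives
\begin{align*}
\left(1 - \tfrac{\delta_0}{2}\right)\omega_{2n} = \lim_{i\to\infty} r_i^{-2n}|B_{h_i}(y_i, r_i)|_{h_i} \geq \lim_{i\to\infty} s_i^{-2n}|B_{h_i}(y_i, s_i)|_{h_i} = \omega_{2n},
\end{align*}
which is impossible, and the lemma follows.

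I expect the main obstacle to be the $\epsilon$-regularity step: making precise and justifying the ``improved'' form of Theorem 5.2 of~\cite{Ch1} --- namely that the scale-invariant $L^1$-smallness of Ricci (rather than an $L^p$-bound on $|Rm|$ with $p$ large, or a pointwise bound) together with Gromov--Hausdorff closeness to a space splitting off $\R^{2n-3}$ already forces full Euclidean regularity in complex dimension $n$. The remaining ingredients --- the negation, the scaling of the Ricci integral, Gromov--Hausdorff precompactness under the lower Ricci bound and non-collapsing, volume continuity, and the Bishop--Gromov endgame --- are routine and follow the template of Lemma~\ref{lma:A6_3}.
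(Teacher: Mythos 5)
Your skeleton matches the paper's: contradiction, rescale by $s_i^{-2}$, extract a Gromov--Hausdorff limit $\hat{B}=B((\underline{0},\hat{z}^*),1)\subset\R^{2n-3}\times C(\hat{Z})$, show this space is Euclidean, and finish with volume continuity and Bishop--Gromov. The setup and the endgame are all fine.

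The gap is in the key step, where you conclude that $\R^{2n-3}\times C(\hat{Z})\cong\R^{2n}$. You justify this by invoking ``since in the K\"ahler setting the singular set has real codimension at least $4$,'' and then note that a translation-invariant codimension-$3$ set $\R^{2n-3}\times\{\hat{z}^*\}$ cannot be singular. That logic would be fine \emph{if} codimension-$4$ regularity were available for the limit at hand, but it is not: after rescaling, $(\hat{B},\hat{y},\hat{g})$ is just a non-collapsed Gromov--Hausdorff limit of K\"ahler manifolds with Ricci bounded below (by a quantity going to $0$) and scale-invariant $L^1$-Ricci going to $0$. Codimension-$4$ regularity of singular sets in this class is precisely what the present lemma, fed into the Cheeger--Naber quantitative stratification in the subsequent paragraphs, is being used to establish (Proposition~\ref{prn:f22_kahlersingular}). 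Invoking it here is circular. The known codimension-$4$ theorems (Cheeger--Colding--Tian, Cheeger) are for non-collapsed K\"ahler \emph{Einstein} limits or K\"ahler limits with two-sided Ricci bounds, neither of which holds after the rescaling.

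What the paper actually does at this point is non-circular and uses the hypotheses more directly. First, the K\"ahler structure on the approximating manifolds forces the limit cone $\R^{2n-3}\times C(\hat{Z})$ to be one of exactly two possibilities: $\R^{2n}$, or $\R^{2n-2}\times C(S_t)$ for some circle of length $t\in(0,2\pi)$; this comes from the fact that the Euclidean splitting factors must pair up under the parallel complex structure (cf.\ Cheeger--Colding--Tian). Second, the scale-invariant $L^1$-smallness of $|Ric|$, which after rescaling becomes $\int_{B_{\tilde{h}_i}(y_i,10)}|Ric|_{\tilde{h}_i}\,d\mu_{\tilde{h}_i}\to0$, is used to select good slices on which the curvature integral is as small as desired (this is the content of the ``improved'' Theorem~5.2 of~\cite{Ch1}), and a Chern--Simons transgression argument on such a slice forces $t=2\pi$, eliminating the cone-angle alternative. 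You correctly flagged the $\epsilon$-regularity step as the crux, but the mechanism you supply for it (quoting codimension-$4$ as a black box) assumes the conclusion; you need the dichotomy plus the slicing/Chern--Simons argument to actually rule out $t<2\pi$.
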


\begin{proof}
  The proof follows the same route as that of Lemma~\ref{lma:A6_3}.

  If the statement was wrong, there exist a sequence of scales $(r_i, \eta_i)$ with $\eta_i \to 0$ and a sequence of K\"ahler manifolds $(N_i, y_i, h_i, J_i)$ with the given conditions violating the statements.
  \begin{itemize}
    \item  $r_i^{-2n}|B(y_{i}, r_i)|_{d\mu_{h_i}} = \left(1-\frac{\delta_0}{2}\right)\omega_{2n}$ for some $0<r_i<\eta_i$.
    \item  $\displaystyle \sup_{r_i <s <\eta_i} s^{-2n+2} \int_{B(y_i, 10s)} |Ric|_{h_i} d\mu_{h_i} <\eta_i$.
    \item  There exists $s_i \in \left(r_i, \eta_i \right)$ such that
      $s_i^{-1} d_{GH}\left( B(y_i, s_i),  B\left( \left( \underline{0}, z_i^* \right), s_i \right) \right) < \eta_i$ for some $\R^{2n-3} \times C(Z_i)$.
  \end{itemize}
  Let $\tilde{h}_i=s_i^{-2} h_i$. Denote the pointed-Gromov-Hausdorff limit of $\left( B_{\tilde{h}_i}(y_i, 1), y_i, \tilde{h}_i \right)$ by $(\hat{B}, \hat{y}, \hat{g})$.
  By limit process, there exists a metric space $\hat{Z}$ such that
  $$\hat{y}=\left(\underline{0}, \hat{z}^* \right) \in \R^{2n-3} \times C(\hat{Z}),~~~\hat{B}=B\left( \left( \underline{0}, \hat{z}^* \right), 1\right).$$
  Like the proof of Lemma~\ref{lma:A6_3}, in order to obtain a contradiction, it suffices to show that
  $\R^{2n-3} \times C(\hat{Z})$ is isometric to $\R^{2n}$.  Actually, the K\"ahler condition implies that $\R^{2n-3} \times C(\hat{Z})$ is either
  $\R^{2n}$ or $\R^{2n-2} \times C(S_t)$ for some circle with length $t \in (0, 2\pi)$. However, for metric $\tilde{h}_i$, we have
  \begin{align*}
    \int_{B_{\tilde{h}_i}(y_i, 10)} |Ric|_{\tilde{h}_i} d\mu_{\tilde{h}_i} = s_i^{-2n+2} \int_{B_{h_i}(y_i, 10s_i)} |Ric|_{h_i} d\mu_{h_i}
    \leq  \sup_{r_i<s<\eta_i} s^{-2n+2} \int_{B_{h_i}(y_i, 10s)} |Ric|_{h_i} d\mu_{h_i}<\eta_i \to 0.
  \end{align*}
  This is enough for us to choose good slice where the integration of $|Ric|$ is as small as possible(c.f. Theorem 5.2 of~\cite{Ch1}).
  Therefore,  Chern-Simons theory implies that $t=2\pi$. Consequently, $\R^{2n-3} \times C(\hat{Z})$ must be isometric to
  $\R^{2n}$ and we can obtain the desired contradiction!
\end{proof}

Fix the pair $(r, \eta)$ such that $0<r<\eta<\xi_0$.  Let $y$ be an arbitrary point in $B(\bar{x}, 2) \subset \bar{M}$, $y_i \in M_i$ such that
$y_i \to y$ as $\left( M_i, x_i, g_i \right)$ converges to $\left( \bar{M}, \bar{x}, \bar{g} \right)$.
Recall that $F_i=\int_{M_i} |Ric+\lambda_i g_i|_{g_i}d\mu_{g_i} \to 0$. For every $s \in (r,\eta)$, we have
\begin{align*}
   s^{2-2n} \int_{B_{g_i}(y_i, 10s)} |Ric|_{g_i} d\mu_{g_i} &\leq s^{2-2n} \int_{B_{g_i}(y_i, 10s)} \left\{ |Ric+\lambda_i g_i|_{g_i}+|\lambda_i| \sqrt{n}\right\}d\mu_{g_i}\\
     &\leq r^{2-2n}\int_{M_i} |Ric+\lambda_i g_i|_{g_i}d\mu_{g_i} + |\lambda_i| \sqrt{n} \left(s^{-2n}|B_{g_i}(y_i, 10s)|_{d\mu_{g_i}}\right) s^2\\
     &\leq r^{2-2n} F_i + 2\sqrt{n} \cdot \omega_{2n} \cdot 10^{2n} \cdot \eta^2.
\end{align*}
It follows that
\begin{align*}
   \sup_{r<s<\eta} s^{2-2n} \int_{B_{g_i}(y_i, 10s)} |Ric|_{g_i} d\mu_{g_i} \leq r^{2-2n} F_i + 2\sqrt{n} \cdot \omega_{2n} \cdot 10^{2n} \cdot \eta^2
     \leq 4\sqrt{n} \cdot \omega_{2n} \cdot 10^{2n} \cdot \eta^2 < \eta
\end{align*}
for large $i$, whenever $\eta$ is chosen very small. Therefore, Lemma~\ref{lma:A7_3} can be applied to obtain that
$\mathcal{V}_{r} \subset \mathcal{S}_{\eta, (r,\eta)}^{2n-4}$ on the limit space $\bar{M}$. Then we can apply Theorem 1.10 of~\cite{ChNa} to obtain that
\begin{align}
   \left| B(\bar{x}, 2) \cap \mathcal{V}_{r} \right| \leq C(n,\kappa,\eta) r^{4-\eta}.
 \label{eqn:A7_8}
\end{align}
From here, we can deduce the following two propositions without difficulty.
\begin{proposition}
  For every $0<p<2$ and $\rho \geq 1$, we have a constant $C=C(m,\kappa,p,\rho)$ such that
  \begin{align*}
    \int_{B(\bar{x}, \rho) \cap \mathcal{R}} |Rm|^{p} d\mu < C.
  \end{align*}
   \label{prn:A7_2}
\end{proposition}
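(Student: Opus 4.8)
The plan is to follow the proof of Proposition~\ref{prn:A6_2} verbatim, simply replacing the Riemannian codimension estimate (\ref{eqn:A6_8}) by its K\"ahler sharpening (\ref{eqn:A7_8}), which improves the exponent from $2-\eta$ to $4-\eta$. As in the Riemannian case, by rescaling and a ball-covering argument it suffices to treat $\rho=1$: the inequality (\ref{eqn:A7_8}) holds, after rescaling, around every point of $B(\bar x,2)$, and covering $B(\bar x,\rho)$ by a controlled number (depending on $n,\kappa,\rho$ via non-collapsing) of unit balls absorbs the dependence on $\rho$ into the final constant.

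First I would record the pointwise curvature bound on the regular part, which is insensitive to the K\"ahler structure: applying the pseudo-locality estimate (\ref{eqn:pseudo}) of Theorem~\ref{thm:pseudo-locality_A22} to the flows $\left(M_i, y_i, g_i(t)\right)$ exactly as in the derivation of (\ref{eqn:A6_7}), one obtains
\begin{align*}
  |Rm|(y)\,\min\left\{ r_V^2(y), 1 \right\} \leq \delta_0^{-1}, \qquad \forall\, y \in \mathcal{R}.
\end{align*}
Consequently, on the dyadic shell $A_j \triangleq \left\{ y \in B(\bar x,1)\cap\mathcal{R} \;:\; 2^{-j-1} < r_V(y) \leq 2^{-j} \right\}$ ($j\geq 0$) one has $|Rm|(y) \leq 4\,\delta_0^{-1}\,2^{2j}$, while on $\left\{ y \in B(\bar x,1)\cap\mathcal{R} \;:\; r_V(y)\geq 1 \right\}$ one has $|Rm| \leq \delta_0^{-1}$. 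These shells, together with the last set, cover $B(\bar x,1)\cap\mathcal{R}$ since $r_V>0$ on $\mathcal{R}$.

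Next I would bound the size of the shells: since $A_j \subset \mathcal{V}_{2^{-j}}\cap B(\bar x,2)$, inequality (\ref{eqn:A7_8}) gives $|A_j| \leq C(n,\kappa,\eta)\,2^{-j(4-\eta)}$ for any sufficiently small $\eta<\xi_0$. Summing the layer-cake with $m=2n$,
\begin{align*}
  \int_{B(\bar x,1)\cap\mathcal{R}} |Rm|^p\, d\mu
  \;\leq\; \delta_0^{-p}\,|B(\bar x,1)| \;+\; \sum_{j\geq 0} \left( 4\,\delta_0^{-1}\,2^{2j} \right)^{p} |A_j|
  \;\leq\; C \;+\; C(n,\kappa,\eta)\,4^{p}\delta_0^{-p} \sum_{j\geq 0} 2^{\,j(2p-4+\eta)}.
\end{align*}
Choosing $\eta < 2(2-p)$ — which is a positive number precisely because $p<2$, and which is also $<\xi_0$ once $p$ is fixed — makes the ratio $2^{\,2p-4+\eta}<1$, so the geometric series converges and the right-hand side is a finite constant $C(n,\kappa,p)$; restoring $\rho$ costs only the covering factor above, giving $C(m,\kappa,p,\rho)$.

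The argument is routine once (\ref{eqn:A7_8}) is available, so there is no serious obstacle at this step; the substantive work was done in Lemma~\ref{lma:A7_3} (via the $\epsilon$-regularity theorem and the Chern--Simons slicing argument) and in the application of Theorem 1.10 of~\cite{ChNa}. The only two points requiring a moment's care are that the pointwise bound (\ref{eqn:A6_7}) is genuinely dimension-free in its proof — it uses only the Ricci-flow pseudo-locality Theorem~\ref{thm:pseudo-locality_A22}, valid for any Riemannian Ricci flow — so it transfers verbatim to the K\"ahler sequence, and that the threshold $\eta < 2(2-p)$ forced by the summation is compatible with the smallness requirement $\eta<\xi_0$ from Lemma~\ref{lma:A7_3}.
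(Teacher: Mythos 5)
Your proof is correct and follows exactly the route the paper intends: the paper states Proposition~\ref{prn:A7_2} immediately after establishing the codimension-four measure estimate (\ref{eqn:A7_8}) and remarks only that the result then follows ``without difficulty,'' meaning one repeats the layer-cake computation from Proposition~\ref{prn:A6_2} with $r^{2-\eta}$ replaced by $r^{4-\eta}$, which is precisely what you do. Your dyadic-shell presentation makes explicit what the paper leaves compressed in the expression $C\left(1+\tfrac{1}{1-2^{2(p-1)+\eta}}\right)$, and your observation that the pointwise bound $|Rm|\min\{r_V^2,1\}\le\delta_0^{-1}$ transfers verbatim to the K\"ahler setting is exactly the point that lets the Riemannian argument carry over.
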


\begin{proposition}
   $\dim_{\mathcal{H}} \mathcal{S} \leq m-4$.
  \label{prn:f22_kahlersingular}
\end{proposition}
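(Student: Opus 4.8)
\emph{Proof proposal.} The plan is to deduce Proposition~\ref{prn:f22_kahlersingular} from the decay estimate (\ref{eqn:A7_8}) in exactly the way Proposition~\ref{prn:A7_1} was deduced from (\ref{eqn:A6_8}); the geometric content has already been supplied by Lemma~\ref{lma:A7_3} (through the Chern--Simons slicing argument ruling out a non-Euclidean cone over a short circle) and by the estimate of~\cite{ChNa} behind (\ref{eqn:A7_8}). Recall that $\mathcal{S}=\mathcal{V}_{0}\subset\mathcal{V}_{r}$ for every $r>0$, and that the computation preceding (\ref{eqn:A7_8}) gives, for every sufficiently small $\eta>0$ and all small $r>0$, the inclusion $\mathcal{V}_{r}\subset\mathcal{S}_{\eta,(r,\eta)}^{2n-4}$ together with the bound
\begin{align*}
  \left| B(\bar{x},2)\cap\mathcal{S}_{\eta,(r,\eta)}^{2n-4}\right| \leq C(n,\kappa,\eta)\, r^{4-\eta}.
\end{align*}
Since the defining inequality of the effective stratum is an infimum of rescaled Gromov--Hausdorff distances over scales $\geq r$, it is stable under moving the center by $o(r)$, so the same bound (with $\eta$ replaced by $\eta/2$ and an adjusted constant) controls the volume of a definite $r$-tube around $\mathcal{S}_{\eta,(r,\eta)}^{2n-4}$, hence of the $r$-tube $T_{r}(\mathcal{S})\cap B(\bar{x},1)$ since $\mathcal{S}\subset\mathcal{V}_{r}\subset\mathcal{S}_{\eta,(r,\eta)}^{2n-4}$.

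Given this, I would run the standard covering argument. Fix a small $\eta>0$, let $r\to 0$, and choose a maximal $r$-separated subset $\{y_{1},\dots,y_{N}\}$ of $\mathcal{S}\cap B(\bar{x},1)$; then the balls $B(y_{j},r/2)$ are pairwise disjoint and lie in $T_{r}(\mathcal{S})\cap B(\bar{x},2)$, while the balls $B(y_{j},r)$ cover $\mathcal{S}\cap B(\bar{x},1)$. Combining disjointness with the non-collapsing lower bound $|B(y_{j},r/2)|\geq c(n,\kappa)\, r^{2n}$ and the tube estimate above yields
\begin{align*}
  N\, c(n,\kappa)\, r^{2n} \leq \sum_{j=1}^{N}\left|B(y_{j},r/2)\right| \leq \left| T_{r}(\mathcal{S})\cap B(\bar{x},2)\right| \leq C(n,\kappa,\eta)\, r^{4-\eta},
\end{align*}
hence $N\leq C(n,\kappa,\eta)\, r^{-(2n-4+\eta)}$. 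The cover of $\mathcal{S}\cap B(\bar{x},1)$ by these $N$ balls of radius $r$ then gives $\mathcal{H}^{2n-4+\eta}\!\left(\mathcal{S}\cap B(\bar{x},1)\right)<\infty$, so $\dim_{\mathcal{H}}\!\left(\mathcal{S}\cap B(\bar{x},1)\right)\leq 2n-4+\eta$; exhausting $\bar{M}$ by balls $B(\bar{x},\rho)$ and letting $\eta\to 0$ (equivalently, noting that the Hausdorff dimension of $\mathcal{S}$ is an integer once $\eta<1$) gives $\dim_{\mathcal{H}}\mathcal{S}\leq 2n-4=m-4$.

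I do not expect a genuine obstacle here: once (\ref{eqn:A7_8}) is available, everything above is routine geometric measure theory, entirely parallel to the Riemannian Proposition~\ref{prn:A7_1}. The only point needing a line of justification is that (\ref{eqn:A7_8}), which is phrased as a bound on $|\mathcal{V}_{r}|$, in fact controls a full $r$-tube about $\mathcal{S}$; this is automatic from the scale-stability of the effective strata $\mathcal{S}_{\eta,(r,\eta)}^{2n-4}$ and is precisely the form in which the stratification estimate of~\cite{ChNa} is established.
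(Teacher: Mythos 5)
Your proof is correct and takes the same route the paper has in mind: the paper asserts that the dimension bound follows from (\ref{eqn:A7_8}) exactly as Proposition~\ref{prn:A7_1} follows from (\ref{eqn:A6_8}), and you have simply written out the standard covering argument (and the tube-versus-$\mathcal{V}_r$ subtlety, correctly resolved via the scale-stability of the effective strata / the tube form of Theorem 1.10 in \cite{ChNa}) that the paper leaves implicit. No issues.
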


Combine all the discussion in this section, we finish the proof of Theorem~\ref{thmin:kgoodlimit}.  Moreover,
Theorem~\ref{thmin:kgoodlimit} can be improved if we assume $\int_{M_i} |Rm|_{g_i}^{p}d\mu_{g_i}<C$ uniformly for some $2 \leq p \leq \frac{m}{2}$,
or we assume $n=p=2$. The proofs follow from the combination of the methods described in this section and that in~\cite{ChNa}.
Since the proofs do not contain new method and we do not know a substantial applications of such results, we omit the details here.

\section{Examples}

 In this section, we show two examples of almost K\"ahler Einstein sequences. The applications
 of the structure theorem (Theorem~\ref{thmin:kgoodlimit}) are also discussed.
 Actually, both examples come to our attention spontaneously when we try to study the geometric properties of K\"ahler manifolds.
 It is for this study that we develop the whole paper.

\subsection{Smooth minimal varieties of general type}

A smooth projective variety $M$ is called of general type if the Kodaira dimension of $M$ is equal to the complex dimension of $M$, i.e.,
$$\displaystyle \lim_{k \to \infty}  \frac{\log \dim H^0(K_M^k)}{\log k}=n.$$
It is called minimal if $K_M$ is numerically effective (nef), i.e.,
$K_M \cdot C \geq 0$ for every effective curve $C \subset M$.
Suppose $M$ is a smooth minimal variety, then it is easy to see that $M$ admits a K\"ahler Einstein
metric if and only if $K_M$ is ample,  by Yau's solution of Calabi conjecture.
Since there are a lot of smooth minimal varieties whose canonical classes are not ample,
we cannot expect to find a K\"ahler Einstein metric on each smooth minimal variety.
However, on each such variety, we can construct a sequence of almost K\"ahler Einstein metrics.

\begin{theorem}
  Suppose $M$ is a smooth minimal projective variety of general type, $J$ is the default complex structure.
  Then there is a point $x_0 \in M$ and a sequence of metrics $g_i$ with the following properties.
  \begin{itemize}
   \item $\displaystyle \lim_{i \to \infty} [\chi_i]=-2\pi c_1(M)$ where $\chi_i$ is the metric form compatible with both $g_i$ and $J$.
   \item $(M, x_0, g_i, J)$ is an almost K\"ahler Einstein sequence.
  \end{itemize}
  \label{thm:a2_1}
\end{theorem}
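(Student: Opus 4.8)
The plan is to realise the $g_i$ as K\"ahler metrics in cohomology classes converging to $-2\pi c_1(M)$, arranged so that $Ric(g_i)+g_i$ is a small, explicit K\"ahler form, and then to verify the three conditions of Definition~\ref{dfnin:almostEinstein} (with $\lambda_i=-1$) by flowing each $g_i$ for unit time under the normalized K\"ahler--Ricci flow and applying the maximum principle together with a Chern--Weil identity.

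\textbf{Construction.} Since $M$ is minimal of general type, $-c_1(M)$ is nef and big. I would fix a K\"ahler form $\omega_0$ and a sequence $\epsilon_i\downarrow 0$, so that the classes $[\omega_i]:=-2\pi c_1(M)+\epsilon_i[\omega_0]$ are K\"ahler (nef plus K\"ahler). Choosing $\theta_i\in[\omega_i]$ K\"ahler, the class $[Ric(\theta_i)+\theta_i-\epsilon_i\omega_0]=2\pi c_1(M)+[\omega_i]-\epsilon_i[\omega_0]$ vanishes, so there is a smooth $F_i$ with $\st\ddb F_i=Ric(\theta_i)+\theta_i-\epsilon_i\omega_0$; by the Aubin--Yau theorem the equation $(\theta_i+\st\ddb\varphi_i)^n=e^{\varphi_i+F_i}\,\theta_i^n$ has a unique K\"ahler solution, and I set $\omega_i:=\theta_i+\st\ddb\varphi_i=:\chi_i$. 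A direct computation of $Ric(\omega_i)=-\st\ddb\log\omega_i^n$ then gives $Ric(\omega_i)=-\omega_i+\epsilon_i\omega_0$. Writing $g_i$ for the associated Riemannian metric and taking $\lambda_i=-1$, we get $Ric(g_i)+g_i=\epsilon_i\,g_{\omega_0}\geq 0$, and $|Ric(g_i)-\lambda_i g_i|\leq\epsilon_i\,\mathrm{tr}_{g_i}\omega_0$, whose integral equals a bounded cohomological quantity proportional to $[\omega_0]\cdot[\omega_i]^{n-1}$; hence $F_i=\int_M|Ric(g_i)-\lambda_i g_i|\,d\mu_{g_i}\to 0$. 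Also $[\chi_i]=[\omega_i]\to-2\pi c_1(M)$, which is the first asserted property.

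\textbf{The flow and $E_i\to 0$.} Running $\D{}{t}g=-Ric+\lambda_i g$ with $\lambda_i=-1$ from $g(0)=g_i$, the relevant class is $[\omega(t)]=-2\pi c_1(M)+e^{-t}\epsilon_i[\omega_0]$, which stays K\"ahler since $-c_1(M)$ is nef; so by~\cite{TZZ} the flow exists on $[0,\infty)\supset[0,1]$, giving the third condition of Definition~\ref{dfnin:almostEinstein}. Put $H=R-2n\lambda_i=R+2n$. At $t=0$ we have $H=\epsilon_i\,\mathrm{tr}_{\omega_i}\omega_0\geq 0$, so the maximum principle for~(\ref{eqn:normscalar}), i.e.\ estimate~(\ref{eqn:A16_1}) with $\lambda_0=-1$, gives $H_{min}(t)\geq e^{-t}H_{min}(0)\geq 0$ on $[0,1]$. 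Thus $|H|\equiv H$ there, and $E_i=\int_0^1\int_M(R+2n)\,d\mu\,dt$; by Chern--Weil the inner integral $\int_M(R+2n)\,d\mu_{g(t)}$ is a function of $[\omega(t)]$ alone which vanishes when $[\omega(t)]=-2\pi c_1(M)$ (the K\"ahler--Einstein value $R\equiv-2n$), and since $[\omega(t)]\to-2\pi c_1(M)$ uniformly for $t\in[0,1]$, it follows that $E_i\to 0$.

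\textbf{Non-collapsing, and the main obstacle.} It remains to find a base point $x_0$ with $|B_{g_i}(x_0,1)|_{d\mu_{g_i}}\geq\kappa>0$ for all large $i$. For this I would invoke the a priori estimates for degenerate complex Monge--Amp\`ere equations, which identify the limit of the $\omega_i$ with the singular K\"ahler--Einstein metric $\omega_{KE}$ on $M$ and give $C^\infty_{loc}$ convergence on the ample locus $M\setminus E$ (the exceptional set of $M\to M_{\mathrm{can}}$); fixing $x_0\in M\setminus E$ and a small $\omega_{KE}$-geodesic ball around it then yields the uniform lower volume bound for large $i$. This is the step I expect to be the real obstacle: it is exactly where the \emph{bigness} of $-c_1(M)$ enters essentially --- if $-c_1(M)$ were merely nef, the total volume $[\omega_i]^n\to(-2\pi c_1)^n$ would degenerate to zero and the sequence would collapse --- and it is the only point that uses analysis beyond the elementary estimates of this paper. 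Combining the four verifications above shows $(M,x_0,g_i,J)$ is an almost K\"ahler Einstein sequence.
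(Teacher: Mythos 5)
Your proof is correct and takes essentially the same route as the paper: construct $\omega_i$ via the (Aubin--)Yau Monge--Amp\`ere equation so that $Ric(\omega_i)+\omega_i=\epsilon_i\omega_0\geq 0$, run the normalized flow, track the K\"ahler class (which stays positive by nefness, hence the flow lives on $[0,\infty)$ by Tsuji/Tian--Zhang), show $H\geq 0$ is preserved by the maximum principle so the $L^1$ scalar integral becomes a cohomological quantity of order $\epsilon_i$, and bound $F_i$ at $t=0$ the same way; for non-collapsing both arguments fix $x_0$ in the ample locus using the local $C^\infty$ convergence of the $\omega_i$ from the degenerate complex Monge--Amp\`ere theory. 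The only cosmetic differences are that the paper phrases the construction through a semipositive current $\chi\in -2\pi c_1(M)$ and writes the Chern--Weil step out with explicit wedge integrals (and uses the K\"ahler normalization $R+n$ where you use the Riemannian $R+2n$), neither of which changes the argument.
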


\begin{proof}
 There exists a nonnegative $(1,1)$-current $\chi$ with $[\chi]=-2\pi c_1(M)$.
 Fix an arbitrary metric form $\omega$ on $M$.  Then for every $\epsilon>0$, $[\chi+\epsilon \omega]$ is a positive class.
 By Yau's solution of Calabi conjecture (c.f.~\cite{Ca} and~\cite{Yau78}), we can find a metric form $\chi_{\epsilon}$ such that
 $Ric(\chi_{\epsilon}) + \chi_{\epsilon}= \epsilon \omega$.  Let $g_{\epsilon}$ be the metric tensor compatible with both $\chi_{\epsilon}$
 and $J$. Clearly, we have
 \begin{align}
    Ric(g_{\epsilon}) + g_{\epsilon} \geq 0.  \label{eqn:A16_4}
 \end{align}
 Then we run the normalized Ricci flow
 $$\displaystyle \D{}{t} g= -Ric - g$$
 from the initial metric $g_{\epsilon}$.
 Denote the metric form at time $t$ by $\chi_{\epsilon, t}$.
 Whenever $\chi_{\epsilon, t}$ is well defined, it satisfies
 \begin{align*}
   [\chi_{\epsilon,t}]= e^{-t}[\chi_{\epsilon}] + \left( 1-e^{-t} \right) [\chi]= [\chi] + \epsilon e^{-t}[\omega]>0.
 \end{align*}
 Therefore, for every $\epsilon>0$, the normalized Ricci flow initiating from $g_{\epsilon}$ exists forever(c.f.~\cite{Tsu} and~\cite{TZZ}).
 In view of (\ref{eqn:A16_1}), the condition $R+n \geq 0$ is preserved by the flow.  Therefore, we have
 \begin{align}
   &\quad \int_{0}^{1} \int_{M} |R+n| \chi_{\epsilon,t}^n dt \nonumber\\
   &=\int_{0}^{1} \int_{M} (R+n) \chi_{\epsilon,t}^n dt \nonumber\\
   &= n \int_{0}^{1} \left(  \int_{M} \left( \chi_{\epsilon,t} - \chi \right) \wedge \chi_{\epsilon,t}^{n-1} \right) dt \nonumber\\
   &= n\epsilon \int_{0}^{1} e^{-t}  \left( \int_M \omega \wedge \left( \chi + \epsilon e^{-t} \omega  \right)^{n-1} \right) dt \nonumber\\
   &< n\epsilon \int_{0}^{1} e^{-t}  \left( \int_M \omega \wedge \left( \chi + \omega  \right)^{n-1} \right) dt \nonumber\\
   &=nC \epsilon.  \label{eqn:A3_1}
 \end{align}
 At time $t=0$, we have $Ric(\chi_{\epsilon})+\chi_{\epsilon} \geq 0$, which implies
 \begin{align}
   \int_M |Ric+\chi_{\epsilon}| \chi_{\epsilon}^n \leq  \int_M \sqrt{n} (R+n) \chi_{\epsilon}^n
   = \epsilon \cdot n^{\frac{3}{2}} \int_M \omega \wedge \left( \chi + \epsilon \omega \right)^{n-1} < C(\chi, \omega) n^{\frac{3}{2}} \epsilon.
   \label{eqn:A3_2}
 \end{align}
 In view of the study of complex Monge-Ampere equation theory (c.f.~\cite{TZZ}), there exists an algebraically defined subvariety $\mathcal{B} \subset M$
 such that $\displaystyle \chi_{\epsilon} \stackrel{C^{\infty}}{\longrightarrow} \hat{\chi}, g_{\epsilon} \stackrel{C^{\infty}}{\longrightarrow} \hat{g}$
 on $M \backslash \mathcal{B}$, whenever $\epsilon \to 0$.
 Since $\hat{g}$ is a smooth metric on $M \backslash \mathcal{B}$, we can choose a small convex geodesic ball
 $B_{\hat{g}}(x_0, 2\xi_0) \subset M \backslash \mathcal{B}$. Let $\epsilon_i \to 0$, $g_i=g_{\epsilon_i}$. Then we have
 \begin{align}
    |B_{g_i}(x_0, 1)|_{d\mu_{g_i}} \geq |B_{g_i}(x_0, 2\xi_0)|_{d\mu_{g_i}} > |B_{\hat{g}}(x_0, \xi_0)| \triangleq \kappa
 \label{eqn:A16_3}
 \end{align}
 for large $i$. By definition, (\ref{eqn:A16_4}), (\ref{eqn:A16_3}), (\ref{eqn:A3_1}) and (\ref{eqn:A3_2}) together imply that $(M, x_0, g_i, J)$ is
 an almost K\"ahler Einstein sequence.
\end{proof}

In the proof of Theorem~\ref{thm:a2_1}, when $\omega$ and $x_0$ are fixed, the almost K\"ahler Einstein sequence depends on the choice of the sequence $\left\{\epsilon_i\right\}_{i=1}^{\infty}$.
It is natural to ask whether the limit space depends on the choice of the sequence $\left\{\epsilon_i\right\}_{i=1}^{\infty}$.
In fact, the answer is no.  In~\cite{TW1}, we proved that every limit space
$\left( \bar{M}, \bar{x}, \bar{g} \right)$ is the metric completion of $\left( M \backslash \mathcal{B}, x_0, \hat{g} \right)$, which is independent
of the choice of $\left\{\epsilon_i\right\}_{i=1}^{\infty}$.  Another interesting question is whether $\bar{M}$ has a variety structure.
Generally, we do not know the answer although this is expected.  However, when
$(M,J)$ satisfies the Chern number equality $\displaystyle \left\{c_1^2(M) - \frac{2(n+1)}{n}c_2(M) \right\} \cdot  c_1^{n-2}(M) = 0$,
then $\bar{M}$ does have a projective variety structure. Actually, in~\cite{TW1}, we will use Theorem~\ref{thmin:kgoodlimit}
to show that $\bar{M}$ is a global quotient of the complex hyperbolic space, henceforth it is a variety.

\subsection{Fano manifolds}
A complex manifold $(M, J)$ is called a Fano manifold if $-K_M$ is ample. By the Kodaira embedding theorem, such a manifold must be projective and admits a
K\"ahler structure. The existence of K\"ahler Einstein metrics on Fano manifolds is a folklore problem (c.f.~\cite{Tian97} and references therein).
In~\cite{Tian87}, the first author introduced the $\alpha$-invariant $\alpha(M)$ and proved that K\"ahler Einstein metrics exist whenever $\alpha(M)>\frac{n}{n+1}$.
If we only assume $\alpha(M) \geq \frac{n}{n+1}$, then the situation becomes subtle. It is not clear whether $\alpha(M) \geq \frac{n}{n+1}$ implies
the existence of K\"ahler Einstein metrics.
On the other hand,  the existence of K\"ahler Einstein metrics implies that Mabuchi's K-energy (c.f.~\cite{Ma} for definition) is bounded from below.
But there are examples(c.f.~\cite{Tian97},~\cite{ChenIV}) where the K-energy is bounded from below and K\"ahler Einstein metrics do not exist.
In short, neither $\alpha(M) \geq \frac{n}{n+1}$ nor the K-energy bounded from below can guarantee the existence of K\"ahler Einstein metrics.
However, either of them provides a sufficient condition for the existence of almost K\"ahler Einstein sequences.

 \begin{proposition}
  Suppose $(M, J)$ is a Fano manifold, $x_0 \in M$. Then in the class $2\pi c_1(M)$, there is a sequence of almost K\"ahler Einstein manifolds $(M, x_0, g_i, J)$
  if one of the following conditions are satisfied.
  \begin{itemize}
    \item  $\alpha(M) \geq \frac{n}{n+1}$.
   \item  Mabuchi's K-energy is bounded from below in $2\pi c_1(M)$.
  \end{itemize}
  \label{prn:A18_1}
\end{proposition}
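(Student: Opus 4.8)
The plan is to run the same scheme as in the proof of Theorem~\ref{thm:a2_1}, with the general-type input replaced by classical a priori estimates on Fano manifolds. Fix a K\"ahler form $\omega_0\in 2\pi c_1(M)$ with Ricci potential $h_0$ (so that $Ric(\omega_0)-\omega_0=i\partial\bar\partial h_0$), and consider the Aubin continuity path
\begin{align*}
  Ric(\omega_t)=t\,\omega_t+(1-t)\,\omega_0,\qquad \omega_t=\omega_0+i\partial\bar\partial\varphi_t>0,
\end{align*}
equivalently the complex Monge--Amp\`ere equation $\omega_t^n=e^{h_0-t\varphi_t}\omega_0^n$. By Aubin's openness and Yau's estimates (\cite{Yau78}, \cite{Ca}) this is solvable on a maximal interval $[0,T)$. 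I claim that under either hypothesis one may take $T=1$ and that $\|\varphi_t\|_{C^0}=o\big((1-t)^{-1}\big)$ as $t\to1$: when $\alpha(M)>\frac{n}{n+1}$ one even gets a uniform $C^0$ bound (and a K\"ahler--Einstein metric) from Tian's $\alpha$-invariant estimate \cite{Tian87}, and the borderline case $\alpha(M)=\frac{n}{n+1}$ is handled by the same integral argument with exponent $\alpha'\uparrow\frac{n}{n+1}$; when Mabuchi's K-energy $\nu$ is bounded below one argues instead through the K\"ahler--Ricci flow, see the third paragraph. Once $\|\varphi_t\|_{C^0}$ is controlled, Yau's second order estimate pins $\omega_t$ between fixed multiples of $\omega_0$, and elliptic bootstrapping on the Monge--Amp\`ere equation gives $\|\varphi_t\|_{C^k}\le C_k$ uniformly in $t<1$; in particular the metrics $g_t$ associated to $\omega_t$ have uniformly bounded geometry.

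Now choose $t_i\uparrow1$, set $g_i:=g_{t_i}$ and $\lambda_i:=1$. The Einstein defect is
\begin{align*}
  Ric(g_i)-g_i=(1-t_i)(\omega_0-\omega_{t_i})=-(1-t_i)\,i\partial\bar\partial\varphi_{t_i},
\end{align*}
so the Ricci potential of $g_i$ is $u_i=-(1-t_i)\varphi_{t_i}$ (up to a constant) and $\|u_i\|_{C^k}\le (1-t_i)C_k\to0$ for every $k$; hence $\|i\partial\bar\partial u_i\|_{C^0}\to0$, which for large $i$ forces $Ric(g_i)+g_i\ge0$, the first bullet of Definition~\ref{dfnin:almostEinstein}. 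For the non-collapsing bullet: $Ric(g_i)\ge t_ig_i\ge\frac12 g_i$ for $t_i\ge\frac12$, so Bonnet--Myers gives $\diam(M,g_i)\le\pi\sqrt{2(2n-1)}$, and since $\Vol(M,g_i)=(2\pi c_1)^n/n!$ is a fixed constant, Bishop--Gromov yields $|B_{g_i}(x_0,1)|\ge\kappa(n)>0$ for \emph{every} $x_0\in M$. Finally the K\"ahler defect obeys $F_i=\int_M|i\partial\bar\partial u_i|\,d\mu_{g_i}=(1-t_i)\int_M|\omega_0-\omega_{t_i}|_{g_i}\,d\mu_{g_i}$, and $\int_M|\omega_0-\omega_{t_i}|_{g_i}\,d\mu_{g_i}\le \int_M(\sqrt n+\mathrm{tr}_{g_i}\omega_0)\,d\mu_{g_i}$ is a cohomological quantity equal to $(\sqrt n+n)\Vol$, so $F_i\le C(1-t_i)\to0$.

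It remains to produce the Ricci flow. The normalized flow $\partial_t g=-Ric+g$ started at $g_i$ preserves the class $2\pi c_1(M)$ and keeps it positive, hence exists on $[0,\infty)$ (\cite{Tsu}, \cite{TZZ}), in particular on $[0,1]$. Since $g_i$ has uniformly bounded geometry and $\|Ric(g_i)-g_i\|_{C^k}\le(1-t_i)C_k$, the standard stability/parabolic estimates for the flow show that $g(s)$ remains uniformly equivalent to $g_i$ with $\|Ric(g(s))-g(s)\|_{C^0}\le C(1-t_i)$ for $s\in[0,1]$, whence
\begin{align*}
  E_i=\int_0^1\!\!\int_M|R-2n|\,d\mu\,ds\le\sqrt{2n}\int_0^1\!\!\int_M|Ric-g|\,d\mu\,ds\le C(1-t_i)\,\Vol\longrightarrow0.
\end{align*}
This exhibits $(M,x_0,g_i,J)$ as an almost K\"ahler--Einstein sequence in $2\pi c_1(M)$, proving the first case. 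For the case ``$\nu$ bounded below'' I would instead run the normalized K\"ahler--Ricci flow from $\omega_0$; it exists for all time, Perelman's estimates \cite{Pe1} give uniform bounds on diameter, scalar curvature, the $C^0$-norm of the Ricci potential $u_t$ and a uniform non-collapsing constant, while $\frac{d}{dt}\nu(\omega_t)=-\int_M|\nabla u_t|^2\,d\mu_t$, so $\int_0^\infty\!\int_M|\nabla u_t|^2\,d\mu_t\,dt<\infty$; choosing good slices $\tau_i\to\infty$ along which $\int_M|\nabla u_{\tau_i}|^2\,d\mu\to0$ and combining with the higher-order estimates for $u_t$ along the flow gives $\|i\partial\bar\partial u_{\tau_i}\|_{C^0}\to0$, and one concludes as above with $g_i=\omega(\tau_i)$.

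The main obstacle is exactly the passage from the weak a priori information each hypothesis supplies to $C^0$-smallness of the \emph{full} Einstein defect $|Ric-\lambda_i g_i|$: in the $\alpha$-invariant case this requires controlling $\|\varphi_t\|_{C^0}$ up to the endpoint $t=1$ (routine when $\alpha(M)>\frac{n}{n+1}$, delicate at equality), while in the K-energy case it requires uniform bounds on $\|\nabla^k u_t\|_{C^0}$ along the flow so that interpolation upgrades the integral smallness of $\nabla u_t$ to pointwise smallness of its Hessian. Once that smallness is in hand, its propagation along the Ricci flow (hence the control of $E_i$) is immediate from parabolic regularity and the uniformly bounded background geometry.
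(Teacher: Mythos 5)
Your overall strategy — take a one--parameter family of metrics whose Ricci curvature is almost lower-bounded by the metric, run the normalized Ricci flow from each, and verify the three bullets of Definition~\ref{dfnin:almostEinstein} — is the same as the paper's, and you correctly identify the cohomological nature of the $F_i$ and volume/noncollapsing estimates, as well as the use of $[\omega_i]=2\pi c_1(M)$ to force a uniform diameter and volume ratio. However, the central step of your proof has a genuine gap that the paper's argument deliberately avoids.

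You attempt to show that the Einstein defect $u_i=-(1-t_i)\varphi_{t_i}$ is small \emph{pointwise}: ``$\|u_i\|_{C^k}\le(1-t_i)C_k\to 0$ ... which for large $i$ forces $Ric(g_i)+g_i\geq 0$,'' and later you need ``$g_i$ has uniformly bounded geometry'' to propagate the $C^0$-smallness of $Ric-g$ along the flow and bound $E_i$. This requires $\|\varphi_t\|_{C^0}\le C$ uniformly as $t\uparrow 1$ (your earlier claim of $o((1-t)^{-1})$ is incompatible with, and weaker than, the ``uniformly in $t<1$'' you actually invoke). But a uniform $C^0$ bound, combined with Yau's second-order estimate and elliptic bootstrapping, would give smooth subconvergence of $\omega_t$ to a genuine K\"ahler--Einstein metric — which is \emph{not} available under the borderline hypotheses $\alpha(M)=\frac{n}{n+1}$ or ``K-energy bounded below'' (recall the paper explicitly notes that neither hypothesis guarantees a K\"ahler--Einstein metric). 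So the a priori estimates you assert do not follow, and the route to $E_i\to 0$ via parabolic stability breaks down. Your K-energy case is likewise unresolved: turning the integral decay $\int_0^\infty\int_M|\nabla u_t|^2\,d\mu_t\,dt<\infty$ into pointwise smallness of $\partial\bar\partial u_{\tau_i}$ needs uniform higher-order control of $u_t$ along the flow, which again would essentially prove KE existence.

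The paper sidesteps all of this. It reduces both hypotheses to $\mathcal{G}(M)=1$ via Sz\'ekelyhidi~\cite{Sze}, takes \emph{any} $\omega_{\alpha_i}$ with $Ric(\omega_{\alpha_i})\ge\alpha_i\omega_{\alpha_i}$, $\alpha_i\uparrow 1$, and observes that (i) $Ric+g>0$ holds \emph{automatically} (no potential estimate needed — along your Aubin path too, $Ric(\omega_t)=t\omega_t+(1-t)\omega_0>0$, so you did not need the $C^0$ argument for this bullet), (ii) $F_i$ is controlled by the cohomological integral $\int_M(R-n\alpha_i)\,\omega_i^n$ because $Ric-\alpha_i g_i\ge 0$, and (iii) $E_i$ is controlled by the cohomological integral $\int_0^1\int_M(R-n+n(1-\alpha_i)e^t)\,\omega_i^n(t)\,dt$ because the maximum principle (\ref{eqn:A16_1}) preserves the lower bound $R-n\ge n(\alpha_i-1)e^t$ along the normalized flow. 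No $C^0$ estimate on potentials, no bounded geometry, no parabolic stability enter — the only analytic input is the scalar maximum principle, and the rest is cohomology. Replacing your $C^0$-smallness argument with this maximum-principle-plus-cohomology argument would close the gap.
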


Before we prove this proposition, let us recall an invariant.
Suppose $(M, J)$ is a Fano manifold, $\omega$ is a metric form in the class $2\pi c_1(M)$.
Since every other metric form in the same class can be written as $\omega_{\varphi}=\omega+\st \ddb \varphi$ for some smooth function $\varphi$ on $M$,
it is clear that
\begin{align*}
  \sup \left\{ t>0 \left| Ric(\omega_{\varphi}) \geq t \omega_{\varphi} \; \textrm{for some}\; \varphi \in C^{\infty}(M)  \right. \right\}
\end{align*}
is independent of the choice of $\omega$.  For brevity, we denote this invariant by $\mathcal{G}(M,J)$, or by $\mathcal{G}(M)$ when no ambiguity happens.
Under this notation, we have the following theorem.

\begin{theorem}
  Suppose $(M, J)$ is a Fano manifold with $\mathcal{G}(M)=1$, $x_0 \in M$.
  Then there is a sequence of metrics $g_i$ with the following properties.
 \begin{itemize}
   \item $[\omega_i] \in 2\pi c_1(M)$ where $\omega_i$ is the metric form compatible with both $g_i$ and $J$.
   \item $(M,x_0,g_i,J)$ is an almost K\"ahler Einstein sequence.
 \end{itemize}
 \label{thm:h9_1}
\end{theorem}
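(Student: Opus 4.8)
The plan is to read the almost-K\"ahler--Einstein sequence off directly from the definition of $\mathcal{G}(M)$, and to let the fixed cohomology class $2\pi c_1(M)$ supply all the analytic control for free. \textbf{Step 1: choice of metrics.} Fix a metric form $\omega$ in $2\pi c_1(M)$. Since $\mathcal{G}(M)=1$, for each $i$ there is, by definition of the supremum, a potential $\varphi_i\in C^\infty(M)$ with $Ric(\omega_i)\ge (1-\epsilon_i)\,\omega_i$, where $\omega_i:=\omega+\st\ddb\varphi_i$ and $\epsilon_i\downarrow 0$ (if the supremum is attained we may take $\epsilon_i\equiv 0$). Let $g_i$ be the associated metric tensor. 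Then $[\omega_i]\in 2\pi c_1(M)$, and for $\epsilon_i<1$ we get $Ric(g_i)\ge (1-\epsilon_i)g_i\ge 0\ge -g_i$, which is the first item of Definition~\ref{dfnin:almostEinstein}.

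\textbf{Step 2: non-collapsing and the flow.} Once $\epsilon_i\le\tfrac12$, the bound $Ric(g_i)\ge (1-\epsilon_i)g_i>0$ and Bonnet--Myers give a uniform diameter bound $\diam(M,g_i)\le D_0=D_0(n)$. The total volume $\int_M d\mu_{g_i}=\int_M\omega_i^n/n!$ is a fixed topological constant $V$, independent of $i$; so by Bishop--Gromov relative volume comparison (valid because $Ric(g_i)\ge 0$) and $B_{g_i}(x_0,D_0)=M$,
\begin{align*}
|B_{g_i}(x_0,1)|_{d\mu_{g_i}}\ \ge\ D_0^{-2n}\,|B_{g_i}(x_0,D_0)|_{d\mu_{g_i}}\ =\ D_0^{-2n}V\ =:\ \kappa>0,
\end{align*}
which is the second item, for the given $x_0$. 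For the third item, run the normalized K\"ahler--Ricci flow $\D{}{t}g=-Ric+g$, i.e.\ $\lambda_i=1\in[-1,1]$; the relevant class $\lambda_i[\omega_i]+(e^{\lambda_i t}-1)c_1(M)$ stays a positive multiple of $c_1(M)>0$, so by the criterion of~\cite{TZZ} recalled in the introduction the flow from $g_i$ exists on $[0,1]$ (in fact for all $t\ge 0$), and it preserves $[\omega(t)]\equiv 2\pi c_1(M)$.

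\textbf{Step 3: the integral smallness.} This is the crux. Because $[\omega(t)]\equiv 2\pi c_1(M)$, the pointwise K\"ahler identity $Ric(\omega(t))\wedge\omega(t)^{n-1}=\tfrac{R}{2n}\,\omega(t)^n$ combined with $\int_M Ric(\omega(t))\wedge\omega(t)^{n-1}=\int_M\omega(t)^n$ forces $\int_M R\,d\mu_{g(t)}=2n\,V$ for every $t\in[0,1]$; equivalently $\int_M(R-2n)\,d\mu_{g(t)}=0$, and note $m\lambda_i=2n$. Writing $H:=R-2n=\operatorname{tr}(Ric-g)$, at $t=0$ we have $H_{\min}(0)\ge\operatorname{tr}\big((1-\epsilon_i)g_i-g_i\big)=-2n\epsilon_i$, so (\ref{eqn:A16_1}) (with $\lambda_0=1$) gives $H_{\min}(t)\ge e^{t}H_{\min}(0)\ge -2en\epsilon_i$ on $[0,1]$. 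Hence $H^-\le 2en\epsilon_i$ pointwise, and since $\int_M H\,d\mu=0$,
\begin{align*}
\int_M|R-2n|\,d\mu_{g(t)}=\int_M\big(H^++H^-\big)\,d\mu_{g(t)}=2\int_M H^-\,d\mu_{g(t)}\ \le\ 4en\epsilon_i V,
\end{align*}
so $E_i=\int_0^1\!\int_M|R-m\lambda_i|\,d\mu\,dt\le 4en\epsilon_i V\to 0$. The same one-sided control of the eigenvalues of $Ric(g_i)-g_i$ (each $\ge-\epsilon_i$, with $\int_M\operatorname{tr}(Ric(g_i)-g_i)\,d\mu_{g_i}=0$) bounds their positive parts in $L^1$ and yields $F_i=\int_M|Ric-g_i|\,d\mu_{g_i}\le C(n)\,\epsilon_i V\to 0$. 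Therefore $(M,x_0,g_i,J)$ is an almost K\"ahler--Einstein sequence.

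\textbf{Expected main obstacle.} There is no serious analytic difficulty; the only delicate point is the normalization bookkeeping in Step 3 --- verifying that $2\pi c_1(M)$ is exactly the class for which the target Einstein constant is $\lambda_i=1$ and the target scalar curvature is $m\lambda_i=2n$, and that the normalized flow genuinely keeps $[\omega(t)]$ in this class, so the topological identity $\int_M R\,d\mu\equiv 2nV$ holds at every time. Granting that, the one-sided maximum-principle estimate (\ref{eqn:A16_1}) upgrades ``average scalar curvature equals the target'' to ``$L^1$-distance to the target is $O(\epsilon_i)$,'' which is the whole content of the conclusion.
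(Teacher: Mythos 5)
Your proof is correct and follows essentially the same route as the paper's: pick $\omega_i$ in $2\pi c_1(M)$ with $Ric(\omega_i)\ge(1-\epsilon_i)\omega_i$, use Bonnet--Myers and Bishop--Gromov for non-collapsing, run the normalized flow $\partial_t g=-Ric+g$ with existence from~\cite{TZZ}, and control both $E_i$ and $F_i$ by combining the one-sided lower bounds on $R-m\lambda_i$ (resp.\ on the eigenvalues of $Ric-g$) coming from the maximum principle (\ref{eqn:A16_1}) with the cohomological identity that the average of $R$ over $(M,\omega(t))$ is fixed since $[\omega(t)]\equiv 2\pi c_1(M)$. The only cosmetic difference is normalization bookkeeping (you use the Riemannian trace with target $2n$, the paper the K\"ahler trace with target $n$), which does not affect the argument.
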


\begin{proof}
Since $\mathcal{G}(M)=1$, for every $0<\alpha<1$, there is a metric form $\omega_{\alpha}$ with $ Ric(\omega_{\alpha}) \geq \alpha \omega_{\alpha}$.
Let $g_{\alpha}$ be the metric tensor compatible with both $\omega_{\alpha}$ and $J$. Clearly, we have
\begin{align}
   Ric(g_{\alpha}) \geq \alpha g_{\alpha}.  \label{eqn:A16_5}
\end{align}
Let $\alpha_i \to 1$, $\omega_i=\omega_{\alpha_i}$, $g_i=g_{\alpha_i}$. Then we have
\begin{align}
  \int_{M} |Ric_{g_i} -g_i| \omega_i^n &\leq \int_{M} \left\{ |Ric_{g_i}-\alpha_i g_i| + (1-\alpha_i) g_i \right\} \omega_i^n \nonumber\\
   &\leq \sqrt{n} \int_{M} \left\{R-n\alpha_i + n(1-\alpha_i) \right\} \omega_i^n \nonumber\\
   &=2n^{\frac{3}{2}} (1-\alpha_i) \cdot (2\pi)^n c_1^n(M) \to 0.
   \label{eqn:h9_ric0}
\end{align}
Initiating from $g_i$, we run the normalized Ricci flow
$$\D{}{t} g\,=\,-Ric + g,$$
which preserves the cohomology class $2\pi c_1(M)$.
Since $R-n \geq n(\alpha_i-1)$ at the initial time, it follows from (\ref{eqn:A16_1}) that
 \begin{align*}
   \left( R-n \right)_{g_i(t)} \geq -n(1-\alpha_i)e^t \Rightarrow  R_{g_i(t)} \geq n\left\{ 1-(1-\alpha_i)e^t \right\}.
 \end{align*}
Consequently, we have
 \begin{align}
   \int_0^1 \int_M |R-n|_{g_i(t)} \omega_i^n(t)dt&=\int_0^1 \int_M \left| R-n\left\{ 1-(1-\alpha_i)e^t \right\} - n(1-\alpha_i)e^t \right|_{g_i(t)} \omega_i^n(t)dt \nonumber\\
   &\leq \int_0^1 \int_M \left\{ R-n\left\{ 1-(1-\alpha_i)e^t \right\} + n(1-\alpha_i)e^t \right\} \omega_i^n(t)dt \nonumber\\
   &=2n(1-\alpha_i) \cdot (2\pi)^n c_1^n(M) \cdot  \int_0^1 e^t dt \nonumber\\
   &=2n(e-1) \cdot (2\pi)^n c_1^n(M) \cdot (1-\alpha_i) \to 0. \label{eqn:h9_r0}
 \end{align}
 Since $\alpha_i \to 1$, we can assume $\alpha_i > \frac{2n-1}{2n}$.  So Bonett-Myers theorem implies a diameter upper bound
 $\diam_{g_i} M< \sqrt{2n} \pi$. By Bishop volume comparison, we have
 \begin{align}
   \frac{|B(x_0, 1)|_{d\mu_{g_i}}}{|M|_{d\mu_{g_i}}}=\frac{|B(x_0, 1)|_{d\mu_{g_i}}}{\left|B \left(x_0, \sqrt{2n} \pi \right) \right|_{d\mu_{g_i}}} \geq C(n)
   \Rightarrow  |B(x_0, 1)|_{d\mu_{g_i}} \geq C(n, c_1^n(M)) \triangleq \kappa,
   \label{eqn:A16_2}
 \end{align}
 which is the non-collapsed condition.  Therefore, by definition, (\ref{eqn:A16_5}), (\ref{eqn:A16_2}), (\ref{eqn:h9_r0}) and (\ref{eqn:h9_ric0})
 yields that $(M_i,x_0,g_i,J)$ form a sequence of almost K\"ahler Einstein  manifolds.
 \end{proof}

Note that $\mathcal{G}(M) =1$ under either condition of Proposition~\ref{prn:A18_1} (c.f.~\cite{Sze}).
Therefore, Proposition~\ref{prn:A18_1} follows from Theorem~\ref{thm:h9_1}. \\

 In both examples, Theorem~\ref{thm:a2_1} and Theorem~\ref{thm:h9_1},
 the complex structure is fixed. This is of course not needed in the set up of almost K\"ahler Einstein manifolds.
 Therefore, potentially, we should be able to construct almost K\"ahler Einstein sequences by deforming the complex structure and
 cohomology class simultaneously.   It is then interesting to see whether the almost K\"ahler Einstein limit space
 is independent of the choice of parameter (of complex structures and metric forms) sequences.  It is also fascinating to ask
 whether the limit space has a variety structure.  These topics will be studied in the future.

Gang Tian, BICMR and SMS, Beijing University and Department of Mathematics, Princeton University, tian@math.princeton.edu\\

Bing  Wang, Simons Center for Geometry and Physics, State University of New York at Stony Brook, bwang@scgp.stonybrook.edu\\
\end{document}